\def\NAT@spacechar{~}
\crefname{claim}{Claim}{Claims}
\Crefname{claim}{Claim}{Claims}
\crefname{figure}{Figure}{Figures}
\Crefname{figure}{Figure}{Figures}
\crefname{equation}{equation}{equations}
\Crefname{equation}{Equation}{Equations}
\newtheorem{definition}{Definition}[section]
\newtheorem{claim}{Claim}
\newtheorem{proposition}[definition]{Proposition}
\newtheorem{theorem}[definition]{Theorem}
\newtheorem{lemma}[definition]{Lemma}
\newtheorem{conjecture}[definition]{Conjecture}
\newtheorem*{claim*}{Claim}
\numberwithin{equation}{section}
\newcommand{\comment}[1]{}
\newcommand{\eps}{\varepsilon}
\renewcommand{\epsilon}{\varepsilon}
\newcommand*\diff{\mathop{}\!\mathrm{d}}
\newcommand{\COMMENT}[1]{}
\title[Resilient degree sequences in random graphs]{Resilient degree sequences with respect to Hamilton cycles and matchings in random graphs}
\author[P.~Condon]{Padraig Condon}
\author[A.~Espuny D\'iaz]{Alberto Espuny D\'iaz}
\author[J.~Kim]{Jaehoon Kim}
\author[D.~K\"uhn]{Daniela K\"uhn}
\author[D.~Osthus]{Deryk Osthus}
\thanks{The research leading to these results was partially supported by the EPSRC, grant no. EP/N019504/1 (D.~K\"uhn), 
and by the Royal Society and the Wolfson Foundation (D.~K\"uhn).
The research was  also partially supported by the European Research Council under the European Union's Seventh Framework Programme (FP/2007--2013) / ERC Grant 306349 (J.~Kim and D.~Osthus). }
\date{\today}
\begin{document}

\begin{abstract}
P\'osa's theorem states that any graph $G$ whose degree sequence $d_1 \le \ldots \le d_n$ satisfies $d_i \ge i+1$ for all $i < n/2$ has a Hamilton cycle.
This degree condition is best possible.
We show that a similar result holds for suitable subgraphs $G$ of random graphs, i.e.~we prove a `resilience version' of P\'osa's theorem:
if $pn \ge C \log n$ and the $i$-th vertex degree (ordered increasingly) of $G \subseteq G_{n,p}$ is at least $(i+o(n))p$ for all $i<n/2$, then $G$ has a Hamilton cycle.
This is essentially best possible and strengthens a resilience version of Dirac's theorem
obtained by Lee and Sudakov.

Chv\'atal's theorem generalises P\'osa's theorem and characterises all degree sequences which ensure the existence of a Hamilton cycle.
We show that a natural guess for a resilience version of Chv\'atal's theorem fails to be true.
We formulate a conjecture which would repair this guess, and show that the corresponding degree conditions ensure the existence of a perfect matching in any subgraph of $G_{n,p}$ which satisfies these conditions.
This provides an asymptotic characterisation of all degree sequences which resiliently guarantee the existence of a perfect matching.
\end{abstract}

\maketitle
\thispagestyle{empty}

\section{Introduction}

One of the most well-known and well-studied properties in graph theory is \emph{Hamiltonicity}.
We say that a graph $G$ is \emph{Hamiltonian} whenever it contains a cycle which covers all of the vertices of $G$.
We refer to such a cycle as a \emph{Hamilton cycle}.
The problem of determining whether or not a graph is Hamiltonian is NP-complete~\cite{Karp}.
Thus, the study of Hamiltonicity focuses on finding sufficient conditions, particularly in the form of degree conditions.

In 1952, \citet{Dirac52} proved that every $n$-vertex graph $G$ with minimum degree at least $n/2$ is Hamiltonian.
\citet{Posa62} strengthened this result.
More specifically, a graph $G$ with degree sequence $d_1 \le \ldots \le d_n$ such that $d_i \ge i+1$ for all $i < n/2$ is Hamiltonian.
This is best possible in the sense that the condition $d_i \ge i+1$ cannot be reduced for any $i$.
\citet{Chva72} generalised this further by essentially characterising all degree sequences which guarantee Hamiltonicity:
a graph with degree sequence $d_1 \leq \ldots \leq d_n$ is Hamiltonian if for all $i<n/2$ we have $d_i\geq i+1$ or $d_{n-i}\geq n-i$.

The search for Hamilton cycles in random graphs has also been at the core of the subject (as well as the closely related problem of finding perfect matchings).
\citet{ER1, ER2} showed that the random graph $G_{n,p}$ with $p\geq C\log{n}/n$ a.a.s.~contains a perfect matching (if $n$ is even and $C$ is large enough).
\citet{Posa76} and \citet{Kor76} independently showed that for the same threshold $G_{n,p}$ is a.a.s.~Hamiltonian, and \citet{KS83} determined the exact threshold for $p$.
Remarkably, one can strengthen these results to obtain the following hitting time results.
Consider the following random graph process: given a vertex set of size $n$, add each of the $\binom{n}{2}$ possible edges, one by one, chosen uniformly at random among all edges that have not been added yet.
Then, \citet{BT} showed that a.a.s.~a perfect matching appears as soon as every vertex has degree at least $1$, and \citet{AKS85} and \citet{Bol84} independently proved that a.a.s.~a Hamilton cycle appears as soon as this graph has minimum degree $2$.

One more recent approach to extend the classical extremal results to random graphs is based on the following concept of \emph{resilience}.
The \emph{local resilience} of a graph $G$ with respect to some property $\mathcal{P}$ is the maximum number $r$ such that for any subgraph $H\subseteq G$ with $\Delta(H)< r$, the graph $G\setminus H$ satisfies $\mathcal{P}$.
One may view this concept as a measure of the damage an adversary can commit at each vertex of $G$, without destroying the property $\mathcal{P}$. 
The systematic study of local resilience was initiated by \citet{SV08}.
Restated in this terminology, Dirac's theorem says that the local resilience of the complete graph $K_n$ with respect to Hamiltonicity is $\lfloor n/2 \rfloor$.

This concept of resilience naturally suggests a generalisation of Dirac's theorem in the setting of random graphs.
\citet{LS12} proved that, when $p=C\log{n}/n$ and $C$ is sufficiently large, the local resilience of the random graph $G_{n,p}$ with respect to Hamiltonicity is a.a.s.~at least $(1/2-\epsilon)np$, extending Dirac's theorem to random graphs.
This improved on earlier bounds~\cite{BKS,FK08,SV08}.
Very recently, \citet{Mont17} as well as \citet{NST17} independently obtained a hitting time version of this result (Nenadov, Steger and Truji{\'c} also obtained such a hitting time version for perfect matchings~\cite{NST17}).

Resilience of random graphs with respect to other properties has also been extensively studied.
In particular, the containment of cycles of all possible lengths~\cite{KLS10}, $k$-th powers of cycles of all possible lengths~\cite{NST2}, bounded degree trees~\cite{BCS11}, triangle factors~\cite{BLS}, and bounded degree graphs~\cite{ABET,HLS12} have been considered.
Local resilience with respect to Hamiltonicity has also been studied in other random graph models, such as binomial random directed graphs~\cite{FNNPS17,HSS16,Mont19} and random regular graphs~\cite{BKS11b,CEGKO}.

\citet{LS12} asked for a characterisation of the degree sequences for which the random graph $G_{n,p}$ is resilient with respect to Hamiltonicity, for $p$ close to $\log{n}/n$.
In this paper, we partially answer this question by extending P\'osa's theorem to the setting of random graphs.
We also prove that the obvious extension to a Chv\'atal-type degree condition is false, while some modifications to those conditions suffice to force at least the containment of a perfect matching.
We conjecture that such a modification is also sufficient for Hamiltonicity.

To state our results precisely, we start with the following definition, which generalises the class of graphs whose degree sequences satisfy P\'osa's condition to the setting of random graphs.

\begin{definition}[P\'osa-resilience]\label{posa}
Let $G=G_{n,p}$ and $\eps>0$.
Let $\mathcal{H}^{\eps}_{n,p}$ be the collection of all $n$-vertex graphs $H$ which satisfy the following property:
there is an ordering $v_1,\ldots,v_n$ of the vertices with $d_H(v_1)\geq\ldots\geq d_H(v_n)$ such that, for all $i < n/2$, 
\begin{equation}\label{equa:PosadegreePosa}
d_H(v_i) \leq (n-i)p - \eps np.
\end{equation}
We denote $\mathcal{H}^{\varepsilon}_{n,p}(G)\coloneqq\{H\in\mathcal{H}^{\eps}_{n,p}:H\subseteq G\}$.
We say that $G$ is $\varepsilon$-\emph{P\'osa-resilient} with respect to a property $\mathcal{P}$ if $G\setminus H \in \mathcal{P}$ for all $H \in \mathcal{H}^{\eps}_{n,p}(G)$.
\end{definition}

We can now state our first main result.

\begin{theorem}\label{teor:posa}
For every $\varepsilon>0$, there exists $C>0$ such that, for $p \ge C\log n/n,$ a.a.s.~the random graph $G_{n,p}$ is $\eps$-P\'osa-resilient with respect to Hamiltonicity.
\end{theorem}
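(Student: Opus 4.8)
The plan is to reduce the statement to a ``sparse regularity + absorbing'' argument, following the general strategy that Lee and Sudakov used for the resilience version of Dirac's theorem, but refining it to cope with a degree sequence rather than a uniform minimum degree condition. Let $G=G\setminus H$ for some $H\in\mathcal{H}^\eps_{n,p}(G_{n,p})$; the P\'osa condition \eqref{equa:PosadegreePosa} means that if we order the vertices of $G$ by increasing degree as $u_1,\dots,u_n$, then $d_G(u_i)\ge (i+o(n))p$ for all $i<n/2$. In particular the $n/2$ vertices of largest degree all have degree $\ge(1/2-o(1))np$, so they span a subgraph to which a resilient Dirac-type argument applies directly; the real work is in handling the low-degree vertices $u_i$ with $i=o(n)$, which may have degree as small as $(1+o(1))p$. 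The first step is therefore to classify the vertices: fix a slowly growing threshold and let $L$ be the set of ``low-degree'' vertices (those $u_i$ with $i\le \delta n$ for a small constant $\delta$) and $M=V\sm L$ the ``medium/high'' vertices. The key structural fact, inherited from \eqref{equa:PosadegreePosa}, is that each $u_i\in L$ still has $d_G(u_i)\ge(i+o(n))p$, so distinct low-degree vertices have, in total, many neighbours; more precisely, any $k\le\delta n$ of the lowest-degree vertices send at least $(k+o(n))p\cdot$(something) edges to the rest, which will let us embed them one at a time into a Hamilton cycle with room to spare.

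The second step is to build the Hamilton cycle via the rotation--extension (P\'osa rotation) technique adapted to $G_{n,p}$, exactly as in the classical proofs of Hamiltonicity of sparse random graphs, but now we must verify that the ``expansion'' properties that drive P\'osa rotations survive the adversarial deletion of $H$. Concretely: (i) show that $G$ is an expander in the sense that every set $S$ with $|S|\le n/\log^2 n$ satisfies $|N_G(S)|\ge 2|S|$ — this uses the lower bound $d_G(u_i)\ge(i+o(n))p$ on the low-degree vertices together with a union bound over $G_{n,p}$ to rule out small sets with few external neighbours; (ii) show there are no sparse ``bad'' cuts, i.e.\ for every partition $V=A\cup B$ with $|A|,|B|\ge n/\log^2 n$ there is an edge of $G$ between $A$ and $B$ — again a union bound over $G_{n,p}$, using that $H$ has bounded maximum degree is \emph{not} assumed here, so instead we use that \eqref{equa:PosadegreePosa} forces the top half of the degree sequence to be dense. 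Properties (i) and (ii) are the standard hypotheses under which P\'osa's rotation lemma yields a Hamilton cycle, so combining them finishes the argument. A cleaner variant is to first find a Hamilton cycle (or a long cycle plus a near-perfect matching / path cover) on $G[M]$ using the Lee--Sudakov resilient Dirac theorem, then incorporate the vertices of $L$ one by one by a local ``insert $u_i$ into the current cycle'' move, justified by the edge count each $u_i$ has into $M$; the order in which we insert the $u_i$ (largest residual degree first) matters, mirroring how P\'osa's condition is used in the deterministic proof.

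The main obstacle, and where most of the technical effort will go, is uniformity over the exponentially large family $\mathcal{H}^\eps_{n,p}(G_{n,p})$: we are not deleting one fixed graph $H$ but must handle \emph{all} subgraphs of $G_{n,p}$ whose degree sequences beat the P\'osa profile, and for $p$ as small as $C\log n/n$ the relevant concentration inequalities are delicate. The standard device is to prove a deterministic ``pseudorandomness implies resilient Hamiltonicity'' statement — a list of properties (edge-distribution between all pairs of large sets, expansion of all small sets, no isolated or low-degree-in-$G$ clusters, co-degree bounds) such that \emph{any} graph on $[n]$ satisfying them and any graph satisfying the P\'osa degree profile on top of them has a Hamilton cycle — and then show $G_{n,p}$ a.a.s.\ has these properties via Chernoff bounds and union bounds over vertex subsets. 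Making the error terms in ``$(i+o(n))p$'' interact correctly with the $n/\log^2 n$-scale thresholds in the expansion hypotheses, so that the $o(n)$ slack is enough to absorb the $L$-vertices but the union bounds still close, is the crux; I would expect to need a two-scale split of $L$ (a tiny set of genuinely near-leaf vertices handled by a direct greedy insertion, and a larger set handled in bulk by the rotation argument) to make the bookkeeping work.
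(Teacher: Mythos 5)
Your proposal does not match the paper's argument and, as written, has genuine gaps. First, a factual point: for $H\in\mathcal{H}^{\eps}_{n,p}$ every vertex satisfies $d_H(v)\le(1-\eps)np$, so $\delta(G\setminus H)\ge(\eps/2)np$; there are no vertices of degree $(1+o(1))p$, and the difficulty is not ``near-leaf'' vertices but the fact that a minimum degree of $\eps np$ alone is nowhere near sufficient. Your main structural plan (apply resilient Dirac to the high-degree half $M$, then insert the low-degree vertices greedily) fails at the insertion step: to splice $u$ into a cycle you need two cycle-consecutive neighbours of $u$, and the adversary, who chooses $H$ after seeing $G$, can arrange that $u$'s $\eps np$ remaining neighbours form an independent set along any given cycle; moreover many vertices of $L$ compete for the same insertion slots. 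Fixing this requires an absorbing structure built in advance, which you do not construct. Your fallback (rotation--extension from expansion plus connectivity) also does not close: those two properties give a large set of rotation endpoints but do not by themselves produce the edge that closes the cycle, and you correctly flag but do not resolve the union bound over all $H$ and all $n!$ paths, which is exactly the crux.

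The paper's proof resolves both issues with ideas absent from your proposal. (1) The P\'osa condition is used \emph{inside} the rotation argument: once the endpoint set $R$ of a longest path reaches size $k$, condition \eqref{equa:PosadegreePosa} guarantees a subset of $R$ of size $\ge\eps n/20$ with $G'$-degree at least $kp+\eps np/2$, and a counting lemma (\cref{claim: rotations3}) converts this into $|R|\ge k+\eps n/10$ after one more rotation round; iterating $O(1/\eps)$ times bootstraps the endpoint set past $(1/2+\eps/4)n$ (\cref{lem: re}). This is where the degree sequence, rather than a uniform minimum degree, does its work. (2) The closing edge and the union bound are handled by a sparsification/double-exposure trick: one first shows $G\setminus H$ contains a subgraph $R$ with only $\delta n^2p$ edges that already has the rotation--expander property $RE(1/2+\eps/4)$ (\cref{lem: exist}), and then union-bounds only over such sparse $R$ and over paths, using the P\'osa condition again to find $\eps n/8$ endpoints whose $H$-degree is at most $(1/2-\eps)np$ so that a Chernoff bound at scale $e^{-\Omega(n^2p)}$ beats $n!\,2^n$ (\cref{lem: ext}). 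Neither the bootstrapping step nor the sparsification appears in your outline, so the proposal as it stands is a plan rather than a proof.
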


Next, we consider the following definition, which generalises the class of graphs whose degree sequences satisfy Chv\'atal's condition to the setting of random graphs.

\begin{definition}[Chv\'atal-resilience]\label{chvatalbad}
Let $G=G_{n,p}$ and $\eps>0$.
Let $\mathcal{H}^{\eps,0}_{n,p}$ be the collection of all $n$-vertex graphs $H$ which satisfy the following property:
there is an ordering $v_1,\ldots,v_n$ of the vertices with  $d_H(v_1)\geq\ldots\geq d_H(v_n)$ such that, for all $i < n/2$, either 
\[
d_H(v_i) \leq (n-i)p - \eps np \qquad
\text{or} \qquad
d_H(v_{n-i}) \leq ip -\eps np.
\]
We denote $\mathcal{H}^{\varepsilon,0}_{n,p}(G)\coloneqq\{H\in\mathcal{H}^{\eps,0}_{n,p}:H\subseteq G\}$.
We say that $G$ is $\eps$-\emph{Chv\'atal-resilient} with respect to a property $\mathcal{P}$ if $G\setminus H \in \mathcal{P}$ for all $H \in \mathcal{H}^{\eps,0}_{n,p}(G)$.
\end{definition}

Surprisingly, unlike the case of P\'osa-resilience, random graphs are not Chv\'atal-resilient with respect to even the containment of perfect matchings.
(We actually prove a stronger result, see \cref{teor:Counter}.)

\begin{theorem}\label{teor:CounterIntro}
For every $0< \eps < 10^{-6}$ there exists $C>0$ such that, for $C\log n/n \leq p\leq 1/25$, a.a.s.~the random graph $G_{n,p}$ is not $\eps$-Chv\'atal-resilient with respect to containing a perfect matching.
\end{theorem}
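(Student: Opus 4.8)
The plan is to produce, a.a.s., a single subgraph $H\in\mathcal{H}^{\eps,0}_{n,p}(G)$ for which $G\setminus H$ has no perfect matching (we may assume $n$ is even, the odd case being vacuous). The obstruction we aim at is of Hall type: a vertex set $I$ such that in $G\setminus H$ every edge leaving $I$ lands in a strictly smaller set $B$, so that no matching saturates $I$. Fix $\beta:=3\eps$ (any constant in $(2.1\eps,\tfrac12-2\eps)$ would do) and let $C$ be large in terms of $\eps$. As a first attempt, pick disjoint sets $B,I$ with $|B|=\beta n$ and $|I|=\beta n+1$, put $R:=V\setminus(B\cup I)$, and let $H$ consist of all $G$-edges inside $I$ together with all $G$-edges between $I$ and $R$. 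Then $N_{G\setminus H}(I)\subseteq B$ and, since $|I|=|B|+1>|B|$, the graph $G\setminus H$ has no perfect matching. The degree sequence of $H$ has three blocks: the vertices of $I$ have $H$-degree $d_G(v)-d_G(v,B)=(1\pm o(1))(1-\beta)np$; the vertices of $R$ have $H$-degree $d_G(w,I)=(1\pm o(1))\beta np$; the vertices of $B$ have $H$-degree $0$.

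Running the condition of \cref{chvatalbad} against the decreasing ordering, the only indices $i<n/2$ that cause trouble are those near $i=\beta n$: there the $i$-th largest $H$-degree is that of an $I$-vertex, which just exceeds $(n-i)p-\eps np$, so one needs the $(n-i)$-th largest $H$-degree (sitting near position $(1-\beta)n$) to be at most $ip-\eps np\approx(\beta-\eps)np$; but with $R$ occupying that range this fails by roughly $\eps np$. The remedy is to arrange that the block of $H$-degree-$0$ vertices starts at position $n-\beta n-1$ rather than $n-\beta n+1$, so that the partners $v_{n-i}$ of the bad indices $i\in\{\beta n,\beta n+1\}$ land inside it; since then $d_{n-i}=0\le ip-\eps np$, the condition is restored. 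Enlarging the degree-$0$ block by $2$ only requires two extra vertices $w_1,w_2$ with no $G$-neighbour in $I$. So the actual construction is: fix any $B$ of size $\beta n$ and any two vertices $w_1,w_2\notin B$; let $I_0$ be the set of vertices in $V\setminus(B\cup\{w_1,w_2\})$ adjacent in $G$ to neither $w_1$ nor $w_2$, which a.a.s.\ has size close to $(1-\beta)(1-p)^2 n$ and hence (since $p\le 1/25$ and $\beta$ is small) at least $\beta n+1$; take $I$ to be $\beta n+1$ of these vertices, $R:=V\setminus(B\cup I)\supseteq\{w_1,w_2\}$, and let $H$ be all $G$-edges inside $I$ plus all $G$-edges from $I$ to $R$. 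Exactly as above, $G\setminus H$ has no perfect matching, so it remains to check $H\in\mathcal{H}^{\eps,0}_{n,p}$.

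For this, first record the a.a.s.\ properties of $G=G_{n,p}$ that we use (with $C$ large): $|d_G(v)-np|\le\eps np/100$ for all $v$; $|d_G(v,B)-\beta np|\le\eps np/100$ for all $v\notin B$ and $|d_G(w,I)-\beta np|\le\eps np/100$ for all $w\notin B\cup\{w_1,w_2\}$ (the latter by a union bound after conditioning on the edges meeting $B\cup\{w_1,w_2\}$, which determines $I_0$ and hence $I$); and $|I_0|\ge\beta n+1$. Then the $H$-degrees lie in three well-separated intervals, so in the decreasing ordering positions $[1,\beta n+1]$ hold $I$ (degree in $[(1-\beta-\eps/10)np,(1-\beta+\eps/10)np]$), positions $[\beta n+2,n-\beta n-2]$ hold $R\setminus\{w_1,w_2\}$ (degree in $[(\beta-\eps/10)np,(\beta+\eps/10)np]$), and positions $[n-\beta n-1,n]$ hold $B\cup\{w_1,w_2\}$ (degree $0$). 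One checks the condition for each $i<n/2$: if $\beta n+2\le i<n/2$ then $d_i\le(\beta+\eps/10)np\le(n-i)p-\eps np$; if $i\le(\beta-1.1\eps)n$ then $d_i\le(1-\beta+\eps/10)np\le(n-i)p-\eps np$; and if $(\beta-1.1\eps)n<i\le\beta n+1$ then $n-i\ge n-\beta n-1$, so $v_{n-i}$ lies in the degree-$0$ block while $ip-\eps np\ge(\beta-2.1\eps)np>0$, whence $d_{n-i}=0\le ip-\eps np$. I expect this last bookkeeping — keeping the block boundaries and the sizes of the $\eps$-buffers aligned so that the ``heavy'' $I$-vertices near position $\beta n$ get paired with degree-$0$ vertices rather than with $R$-vertices — to be the only genuinely delicate point; the rest is standard concentration together with a union bound.
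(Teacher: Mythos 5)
Your construction is correct and is essentially the paper's own argument: delete all $G$-edges from a set $I$ except those into a slightly smaller set $B$, creating a Hall-type obstruction, and then pad the zero-degree block of $H$ just enough that the Chv\'atal partners $v_{n-i}$ of the few problematic indices $i$ near $|I|$ are zero-degree vertices. The only real difference is that the paper proves the stronger \emph{shifted} statement (\cref{teor:Counter}, with shift $\lceil(3p)^{-1}\rceil/n$), which forces it to pad with a set $Y$ of size $\Theta(1/p)$ rather than your two vertices $w_1,w_2$; for the unshifted \cref{teor:CounterIntro} your constant-size padding suffices and your bookkeeping of the block boundaries checks out.
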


This leads to the following modified version of \cref{chvatalbad}.
A related concept (i.e.~a shift in the Chv\'atal condition) was considered by \citet{KOT10} in the setting of directed Hamilton cycles.

\begin{definition}[Shifted Chv\'atal-resilience]\label{chvatal}
Let $G=G_{n,p}$ and let $\eps\text{,}\, \delta >0$.
Let $\mathcal{H}^{\eps,\delta}_{n,p}$ be the collection of all $n$-vertex graphs $H$ which satisfy the following property:
there is an ordering $v_1,\ldots,v_n$ of the vertices with $d_H(v_1)\geq\ldots\geq d_H(v_n)$ such that, for all $i < n/2$, either 
\begin{equation}\label{equa:Posadegree}
d_H(v_i) \leq (n-i)p - \eps np
\end{equation}
or
\begin{equation}\label{equa:Chvataldegree}
d_H(v_{n-i-\delta n}) \leq ip -\eps np.
\end{equation}
We denote $\mathcal{H}^{\varepsilon,\delta}_{n,p}(G)\coloneqq\{H\in\mathcal{H}^{\eps,\delta}_{n,p}:H\subseteq G\}$.
We say that $G$ is $(\varepsilon,\delta)$-\emph{Chv\'atal-resilient} with respect to a property $\mathcal{P}$ if $G\setminus H \in \mathcal{P}$ for all $H \in \mathcal{H}^{\eps,\delta}_{n,p}(G)$.
\end{definition}

Note that \eqref{equa:Chvataldegree} is never satisfied for $i< \eps n$. 
The conditions \eqref{equa:Posadegree} and \eqref{equa:Chvataldegree} together imply that
\begin{equation}\label{equa:maxdegree}
d_H(v) \le (1-\eps)np
\end{equation}
for all $H \in \mathcal{H}^{\eps, \delta}_{n,p}$ and all vertices $v$ of $H$.
As $\mathcal{H}_{n,p}^\eps\subseteq\mathcal{H}^{\varepsilon,\delta}_{n,p}$, the same bound holds when considering $\eps$-P\'osa-resilience.

With this new definition of shifted Chv\'atal-resilience we can obtain the following version of Chv\'atal's theorem for random graphs with respect to the containment of perfect matchings.

\begin{theorem}\label{teor:ChvMatch}
For every $\varepsilon>0$, there exists $C > 0$ such that, for $p \ge C\log n/n$, a.a.s.~the random graph $G_{n,p}$ is $(\eps,\eps)$-Chv\'atal-resilient with respect to containing a perfect matching if $n$ is even.
\end{theorem}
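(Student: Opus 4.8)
The plan is to argue by contradiction. Suppose $G'\coloneqq G\setminus H$ has no perfect matching for some $H\in\mathcal H^{\eps,\eps}_{n,p}(G)$; I would derive from the Gallai--Edmonds decomposition of $G'$ a configuration incompatible with the pseudorandomness of $G=G_{n,p}$ and with the degree conditions defining $\mathcal H^{\eps,\eps}_{n,p}$.

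First I would fix a constant $\eta$ with $\eta\ll\eps$, take $C$ large in terms of $\eps$ and $\eta$, and record the standard properties that $G=G_{n,p}$ has a.a.s.\ for $p\ge C\log n/n$: (a) $d_G(v)=(1\pm\eta)np$ for every $v$; (b) $e_G(A,B)=(1\pm\eta)|A||B|p$ and $|N_G(v)\cap B|=(1\pm\eta)|B|p$ for all (disjoint) $A,B$ with $|A|,|B|\ge\eta n$; (c) $|N_G(S)\setminus S|\ge\tfrac12|S|np$ whenever $|S|\le n/(Cnp)$; (d) standard upper bounds on the number of edges inside vertex sets, e.g.\ $e_G(S)\le|S|^2p+|S|\log n$ for every $S$. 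These follow from Chernoff bounds and a union bound exactly as in~\cite{LS12}, and from now on we work with a fixed $G$ satisfying (a)--(d). Since $d_{G'}(v)=d_G(v)-d_H(v)$, property (a) together with \eqref{equa:maxdegree} gives $\delta(G')\ge(\eps-\eta)np$ and $\Delta(G')\le(1+\eta)np$. Fixing the ordering $v_1,\dots,v_n$ witnessing $H\in\mathcal H^{\eps,\eps}_{n,p}$, the sequence $d_{G'}(v_1),\dots,d_{G'}(v_n)$ is nondecreasing up to an additive $2\eta np$, and for every $j<n/2$ we have either $d_{G'}(v_j)\ge jp+(\eps-\eta)np$ (\emph{P-type}) or $d_{G'}(v_{n-j-\eps n})\ge(n-j)p+(\eps-\eta)np$ (\emph{C-type}). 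Since the C-type bound exceeds $\Delta(G')$ for $j<(\eps-2\eta)n$, the P-type bound is forced for all $j\le(\eps/2)n$ (so $G'$ has few vertices of degree below roughly $2\eps np$), while applying the conditions near $j=n/2$ forces at least $n/2$ vertices of $G'$ to have degree at least $(\tfrac12+\eps-2\eta)np$.

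Next I would apply the Gallai--Edmonds structure theorem to $G'$ (using that $n$ is even and $G'$ has no perfect matching): there is a partition $V(G')=D\cup A\cup C$ in which $G'[C]$ has a perfect matching, the components $K_1,\dots,K_t$ of $G'[D]$, ordered $|K_1|\le\dots\le|K_t|$, are factor-critical (hence odd), $N_{G'}(K_i)\subseteq K_i\cup A$ for every $i$ and $N_{G'}(v)\subseteq C\cup A$ for every $v\in C$, and the number of components satisfies $t\ge a+2$, where $a\coloneqq|A|$. Two consequences are crucial: (i) each $v\in K_i$ has $d_{G'}(v)\le|K_i|+a-1$, so by the minimum-degree bound $|K_i|\ge(\eps-\eta)np-a$ for all $i$; and (ii) every $G$-edge from a component $K_i$ to $V\setminus(K_i\cup A)$, and every $G$-edge from $C$ to $D$, lies in $H$, so it is constrained by $\Delta(H)\le(1-\eps)np$. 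I would then split according to the size of $a$. When $a\le(\eps/2)np$, (i) forces $|K_i|\ge(\eps/2)np$ for all $i$, hence $t\le 2/(\eps p)$ and $a=o(n)$; summing the inequality $d_H(v)\ge d_G(v)-|N_G(v)\cap K_i|-|N_G(v)\cap A|$ over $v\in D$ and using (b), (d) and $\Delta(H)\le(1-\eps)np$ either yields an outright contradiction (if all components are ``small'') or forces $|K_t|\ge c\eps n$ for an absolute $c>0$; in the latter case one locates the low-$G'$-degree vertices (those in small $K_i$, and those capped by the $K_i$-to-$K_t$ and $C$-to-$D$ edges that are forced into $H$) in the ordering and contradicts either the P-type ramp-up (using $a=o(n)$) or the ``$\ge n/2$ vertices of degree $\ge(\tfrac12+\eps)np$'' conclusion from the previous paragraph. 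When $a>(\eps/2)np$, an $A$--$D$ edge count via (b) together with $t\ge a+2$ forces $a=\Theta(n)$ and shows that all but $o(n)$ vertices of $D$ lie in components of size $\lesssim a$; consequently $\ge n-a$ vertices of $G'$ have degree at most roughly $\max\{a,(n-|D|)p\}$, and these occupy the initial segment $v_1,\dots,v_{n-a-o(n)}$. Applying the degree condition at an index $j$ for which $n-j-\eps n$ lies just inside this initial segment then traps $v_{n-j-\eps n}$ among the low-degree vertices, so C-type fails there as well, while P-type fails because $v_j$ is even lower --- a contradiction.

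The main obstacle is the case $a=\Theta(n)$, and within it the precise bookkeeping that places the low-$G'$-degree vertices as an initial segment of $v_1,\dots,v_n$: the degree condition is a disjunction, and one must verify that after deleting a near-perfect matching there is no room for the ``high-degree escape'' that the C-type alternative offers. This is exactly the point at which the shift by $\delta n=\eps n$ in \eqref{equa:Chvataldegree} is used --- it moves the index from $n-j$ into the low-degree segment --- and explains why the unshifted \cref{chvatalbad} fails (consistently with \cref{teor:CounterIntro}). A secondary technical point is that, for $p$ as small as $C\log n/n$, the vertex-expansion property (c) is only available for sets of size $O(n/np)$, so all estimates about large sets must be carried out through the edge-density bounds (b), (d) and the bounded-degree property of $H$ rather than through expansion.
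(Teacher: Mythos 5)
Your overall strategy is essentially the paper's: Tutte's theorem (of which Gallai--Edmonds is a refinement) produces a set $A$ such that $G'-A$ has more than $|A|$ components, and one then plays the degree conditions and the pseudorandomness of $G$ against this structure, splitting on the size of $A$. Your preliminary reductions (the P\'osa alternative is forced for $i\lesssim \eps n$; $\delta(G')\ge(\eps-\eta)np$ via \eqref{equa:maxdegree}; at least $n/2$ vertices of $G'$-degree at least $(1/2+\eps/2)np$) are correct and all appear in the paper's proof. The extra Gallai--Edmonds structure (factor-criticality, $N_{G'}(C)\cap D=\es$) is not actually needed.

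There is, however, a genuine gap in the step you yourself identify as the crux: ``locating the low-$G'$-degree vertices as an initial segment of $v_1,\dots,v_n$''. To place a vertex $v$ of a component $K_i$ early in the ordering you need an \emph{upper} bound on $d_{G'}(v)$, hence an upper bound on $e_G(v,K_i)$ for a single vertex and an adversarially chosen set. Your property (b), asserting $|N_G(v)\cap B|=(1\pm\eta)|B|p$ uniformly over all $B$ with $|B|\ge\eta n$, is false: taking $B\supseteq N_G(v)$ of size $\eta n$ gives $|N_G(v)\cap B|\approx np\gg \eta np$. Worse, this failure is forced in your setting: since $N_{G'}(v)\setminus A\subseteq K_i$, every $v\in K_i$ satisfies $e_G(v,K_i)\ge(\eps-\eta)np-|A|$, which already exceeds $(1+\eta)|K_i|p$ whenever $|K_i|\ll n$; and \cref{lem: edges} applied to $(\{v\},K_i)$ has error term $c\sqrt{|K_i|np}\gg np$, so it yields nothing per vertex. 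This breaks the claims ``$\ge n-a$ vertices have degree at most roughly $\max\{a,(n-|D|)p\}$ and occupy the initial segment'' and ``P-type fails because $v_j$ is even lower'': showing that \eqref{equa:Posadegree} or \eqref{equa:Chvataldegree} \emph{fails} at an index requires exactly these unavailable bounds. The paper's proof is organised to avoid this entirely: it assumes one of the two alternatives \emph{holds} at a well-chosen index $j$, deduces that a set of $\Omega(\eps n)$ vertices inside (resp.\ outside) the relevant component has $G'$-minimum degree at least $jp+\eps np/2$ --- a per-vertex \emph{lower} bound, which the degree condition does supply --- and then contradicts the component structure via the large-set expansion property \cref{prop: neigh}\ref{neighitem3}, itself proved by set-level edge counts. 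Your averaged inequality (summing over $v\in D$) controls $\sum_{v\in K_i}e_G(v,K_i)=2e_G(K_i)$ but not individual summands, so your sketch must be reorganised along the paper's contrapositive lines. (Two smaller points: the expansion constant $1/2$ in your (c) is useless after deleting $H$ with $\Delta(H)\le(1-\eps)np$, one needs $1-\eps/2$; and at the start of your Case 2 one may have $|A|=O(\log n)$, so the $A$--$D$ edge count must use the all-set-sizes bound of \cref{lem: edges} rather than (b).)
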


We conjecture that \cref{teor:ChvMatch} also holds if perfect matchings are replaced by Hamilton cycles.

\begin{conjecture}
For every $\varepsilon>0$, there exists $C > 0$ such that, for $p \ge C\log n/n$, a.a.s.~the random graph $G_{n,p}$ is $(\eps,\eps)$-Chv\'atal-resilient with respect to Hamiltonicity.
\end{conjecture}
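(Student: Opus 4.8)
The plan is to condition on a collection of a.a.s.\ properties of $G=G_{n,p}$ --- uniform degree concentration $d_G(v)=(1\pm o(1))np$, the standard edge-discrepancy (jumbledness) estimates, and mild expansion of $G$ itself --- none of which the adversary can destroy, and then to argue \emph{for every} admissible $H\in\cH^{\eps,\eps}_{n,p}(G)$ simultaneously. Writing $G'\coloneqq G\setminus H$, degree concentration turns the defining inequalities \eqref{equa:Posadegree}--\eqref{equa:Chvataldegree} into a shifted Chv\'atal-type lower bound on the degree sequence of $G'$ (quantitatively, with the two $\eps np$ gaps replaced by $\tfrac12\eps np$): for every $i<n/2$, either $G'$ has at most $i-1$ vertices of degree $<ip+\tfrac12\eps np$, or $G'$ has at least $i+\eps n$ vertices of degree $\ge(n-i)p$; in particular $\delta(G')\ge(\eps-o(1))np=\Omega(\log n)$ by \eqref{equa:maxdegree}. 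The conjecture thus reduces to showing that \emph{every} such $G'\subseteq G_{n,p}$ is Hamiltonian, which I would attack by the rotation--extension machinery underlying \cref{teor:posa}, augmented with an absorption step that handles the P\'osa-deficient vertices permitted by the second alternative.

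Concretely, the steps in order would be: \textbf{(1)} using the degree-sequence dichotomy, isolate the set $D$ of P\'osa-deficient vertices (those of $G'$-degree $<ip+\tfrac12\eps np$ at their index $i$) and a set $W$ of vertices of near-maximum degree in $G'$, and extract from the $\delta=\eps$ shift that $W$ is large and, at every scale, exceeds the corresponding deficient set by $\ge\tfrac12\eps n$; \textbf{(2)} prove small-set expansion in $G'$: combining $\delta(G')=\Omega(np)$ with edge-discrepancy shows that every $S$ with $|S|\le\eps n/8$ satisfies $|N_{G'}(S)|\ge\eps n/8$ (so, in particular, $G'$ is connected), while medium and large sets are controlled directly by the degree-sequence condition together with discrepancy, giving the P\'osa-type expansion needed for rotation--extension; \textbf{(3)} build, inside $W$, an absorbing structure (in the style of the absorbers used for Hamilton cycles in sparse random graphs) into which \emph{every} vertex of $D$ can be inserted --- each such vertex has $\Omega(\log n)$ neighbours, hence robustly many admissible absorber slots by discrepancy, and step (1) guarantees enough room in $W$; \textbf{(4)} run rotation--extension on $G'$ (after reserving the absorbing structure) to build a cycle covering all but $o(n)$ vertices and threaded through the absorbers, then absorb the leftover vertices together with $D$ to close a Hamilton cycle. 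Throughout, each probabilistic input must be established for all admissible $H$ at once --- typically by a union bound over the (few relevant) possibilities for the structure the adversary exposes --- rather than for a fixed $H$.

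The main obstacle is steps (3)--(4) and, within them, the interaction between the absorbing structure and the expansion demanded by rotation--extension. Unlike in the P\'osa-resilient setting, $G'$ carries no near-$(n/2)$-type strength anywhere except possibly on a small set: the second alternative in \cref{chvatal} only asserts the \emph{existence} of the right number of high-degree vertices at each scale, and $D$ may be linear in $n$ with degrees as low as $\eps np$, so $G'[D]$ need be neither connected nor expanding. The absorbing structure therefore has to be simultaneously robust to an adversary who chooses $H$ after seeing $G$, have capacity to absorb a linear-sized $D$, and be embeddable using only the guaranteed high-degree vertices and the edges actually present --- while still leaving behind enough expansion for rotation--extension to succeed. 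This is exactly where the shift $\delta=\eps$ has to be spent: \cref{teor:CounterIntro} shows that at $\delta=0$ a Hall-type (and, more generally, connectivity-type) obstruction survives even for perfect matchings, so any proof is forced to use $\delta>0$ in an essential way, and making the bookkeeping close with $\delta=\eps$ rather than $\delta\gg\eps$ looks like the delicate part.
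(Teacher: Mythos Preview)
The statement you are attempting to prove is stated in the paper as a \emph{conjecture}, not a theorem: the paper does not contain a proof of it. There is therefore nothing in the paper to compare your proposal against. The paper proves the perfect-matching analogue (\cref{teor:ChvMatch}) via Tutte's theorem and proves the P\'osa-resilient Hamiltonicity result (\cref{teor:posa}) via rotation--extension, but explicitly leaves the $(\eps,\eps)$-Chv\'atal-resilient Hamiltonicity statement open.

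Your proposal is thus a sketch of an attack on an open problem rather than a proof to be checked. As a plan it is reasonable in outline --- combining absorption for the P\'osa-deficient vertices with rotation--extension on the remainder is a natural strategy, and you correctly identify that the $\delta=\eps$ shift must be used essentially in light of \cref{teor:CounterIntro}. But steps (3)--(4) are where all the content lies, and your own discussion acknowledges this: building absorbers for a linear-sized set $D$ of vertices with degrees as low as $\eps np$, robustly against an adaptive adversary, while retaining enough expansion for rotation--extension, is precisely the difficulty that has left this a conjecture. Nothing in your proposal resolves that difficulty; it only names it. In particular, the claim in step (2) that ``medium and large sets are controlled directly by the degree-sequence condition together with discrepancy, giving the P\'osa-type expansion needed for rotation--extension'' is not obviously true here: in the P\'osa setting (\cref{lem: re}) one crucially uses that \emph{every} set of size $\ge\eps n/10$ contains a subset of high minimum $G'$-degree (see \eqref{eq: delta contains}), and under the Chv\'atal condition this can fail for sets lying inside $D$.
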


The following simple construction shows that this statement, if true, is essentially best possible.
Let $G = G_{n,p}$ with $p\geq C\log n/n$ for some sufficiently large $C$.
Given any $\eps n\leq i < n/2$, fix disjoint sets $X,Y\subseteq V$ of sizes $i$ and $n-i$, respectively, and let $H$ be the induced bipartite subgraph between $X$ and $Y$.
One can then prove that a.a.s.
\[
d_H(x) \leq (n-i)p + \eps np \qquad \text{ and } \qquad d_H(y) \leq ip + \eps np
\]
for all $x\in X$ and $y\in Y$.
Thus, $H$ is `close' to satisfying the conditions of \cref{chvatal}, and it is clear that $G\setminus H$ is not Hamiltonian since it is disconnected.\COMMENT{
\begin{proposition}
For every $\eps > 0$ there exists $C>0$ such that for $p\geq C\log n/n$ the random graph $G=G_{n,p}$ a.a.s.~satisfies the following property: for all $\eps n < i < n/2$ there exists a graph $H\subseteq G$ and an ordering $v_1,\ldots,v_n$ of the vertices with $d_H(v_1)\geq\ldots\geq d_H(v_n)$ such that 
\[
d_H(x) \leq (n-i)p + \eps np, \qquad d_H(y) \leq ip + \eps np
\]
and $G\setminus H$ is not Hamiltonian.
\end{proposition}
\begin{proof}
Let $1/n \ll1/C\ll\eta \ll \epsilon$.
Let $X \subseteq V$ with $|X| = i$, and let $Y\coloneqq V \setminus X$.
Condition on the event that for all $v \in V$ we have 
\begin{equation}\label{equa:counter}
    e_G(v, X) = (1 \pm \eta)|X|p \qquad \text{ and } \qquad e_G(v, Y) = (1 \pm \eta)|Y|p. 
\end{equation}
Note this event occurs a.a.s.~by \cref{lem: mindeg,lem: mindegGnm}\COMMENT{One may regard the edges between $X$ and $Y$ as the edges of the random bipartite graph $G_{X,Y,p}$.
To obtain the statements above, apply \cref{lem: mindeg,lem: mindegGnm} with $\eta/2$ playing the role of $\eta$.}.
Next we construct a graph $G'$ from $G$ by removing the edge set $E_G(x,Y)$ for every $x \in X$.
Note that $G'$ is not connected, so it is not Hamiltonian.
By \eqref{equa:counter}, it follows that, for all $x\in X$ and $y\in Y$,
\[
d_{G'}(x) > (1 - \eta)|X|p >ip - \eta np \qquad 
\text{ and } \qquad
d_{G'}(y) > (1 - \eta)|Y|p > (n-i)p - \eta np. 
\]
Let $H\coloneqq G\setminus G'$. 
Note that for all $x\in X$ and $y\in Y$ we have
\[d_H(x)\leq(1+\eta)np-ip + \eta np\leq (n-i)p+\eps np\]
and
\[d_H(y)\leq(1+\eta)np-(n-i)p + \eta np\leq ip+\eps np.\]
Therefore, $H$ is of the desired form.
\end{proof}}
The same construction shows that \cref{teor:posa} is essentially best possible (in the sense that we cannot significantly relax the degree condition) and that \cref{teor:ChvMatch} is essentially best possible when considering odd $i$.

Investigating resilience with respect to degree sequences is natural not only for perfect matchings and Hamilton cycles, but also for other properties. 
Several results on degree sequences forcing given substructures have been obtained in the classical setting (see e.g.~\cite{ST17,Tregs16} for such results involving P\'osa-type degree sequences and \cite{KT13} for Chv\'atal-type degree sequences).
It would be interesting to see if one can obtain resilience versions (for random graphs) of some of these results.


\section{Preliminaries}

\subsection{Notation}

For $n \in \mathbb{N},$ we denote $[n] \coloneqq \{1, \ldots , n\}$.
The constants which appear in hierarchies are chosen from right to left.
That is, whenever we use a hierarchy $0 < 1/n \ll a \ll b \le 1,$ we mean that there exist non-decreasing functions $f\colon [0, 1) \to [0,1)$ and $g\colon [0, 1) \to [0,1)$ such that the result holds for all $0 \le a$, $b \le 1$ and all $n \in \mathbb{N}$ with $a \le f(b)$ and $1/n \le g(a)$.
We will not calculate these functions explicitly.

We use \emph{a.a.s.}~as an abbreviation for \emph{asymptotically almost surely}. 
Whenever we claim that a result holds a.a.s.~for $G_{n,p}$, we mean that the probability that our result holds tends to one as $n$ tends to infinity. 
For the purpose of clarity, we will ignore rounding issues when dealing with asymptotic statements, whenever the values we consider tend to infinity with $n$.

Given an $n$-vertex graph $G$ we define $e(G)\coloneqq |E(G)|$.
Given a set $A \subseteq V(G)$ we denote by $e_G(A)$ the number of edges in $G$ whose endpoints are both in $A$.
Given another set $B \subseteq V(G)$ we denote by $E_G(A,B)$ the set of edges of $G$ with one endpoint in $A$ and the other in $B$ (note that $A$ and $B$ are allowed to have a nonempty intersection), and $e_G(A,B)\coloneqq|E_G(A,B)|$.
Given any $v\in V(G)$, we will write $e_G(v,A)\coloneqq e_G(\{v\},A)$.
Sometimes it will be useful to consider $e_G'(A,B)\coloneqq e_G(A,B)+e_G(A\cap B)$.
We will often refer to the graph $G[V(G) \setminus A]$, which we denote as $G - A$.
If $A$ and $B$ are disjoint, the notation $G[A,B]$ will refer to the induced bipartite subgraph with vertex classes $A$ and $B$.
We denote the \emph{neighbourhood} of $A$ as $N_G(A)\coloneqq\{v\in V(G):e_G(\{v\},A)>0\}$.
Given a vertex $v \in V(G)$ we define its \emph{degree} as $d_G(v)\coloneqq|N_G(\{v\})|$.
We denote the minimum degree in a set of vertices as $\delta_G(A)\coloneqq\min\{d_G(v):v\in A\}$, and the maximum degree as $\Delta(G)\coloneqq\max\{d_G(v):v\in V(G)\}$.
We often consider the sequence of degrees of the vertices of $G$ ordered increasingly, and refer to it as the \emph{degree sequence} of $G$.

The binomial random graph $G_{n,p}$ is obtained by adding each of the edges of a complete graph on $n$ vertices with probability $p$, independently of the other edges.
We will always denote the vertex set of $G_{n,p}$ by $V$.
We use $G_{n,m,p}$ for a random bipartite graph with vertex classes of size $n$ and $m$, respectively; each edge between the classes is added with probability $p$ independently of every other edge, as above.
Whenever we consider a random bipartite graph between vertex sets $A$ and $B$, we also refer to this model as $G_{A,B,p}$.


\subsection{Tools for random graphs}

We will need the following Chernoff bound (see e.g.~\cite[Corollary 2.3]{JLR}).

\begin{lemma}\label{lem: chernoff}
Let $X$ be the sum of $n$ independent Bernoulli random variables and let $\mu \coloneqq \mathbb{E}[X]$.
Then, for all $0\le \delta \le 1$ we have that $\mathbb{P}[X  \ne  (1 \pm \delta) \mu] \leq 2e^{-\delta^2\mu/3}$.
\end{lemma}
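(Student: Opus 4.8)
The plan is to run the standard exponential-moment (Bernstein/Chernoff) argument, so in fact one may simply quote \cite[Corollary 2.3]{JLR}; for completeness I sketch the self-contained route. Write $X = \sum_{i=1}^n X_i$ with the $X_i$ independent, $X_i \sim \mathrm{Bernoulli}(p_i)$, so that $\mu = \sum_{i=1}^n p_i$. First I would treat the upper tail. For any $t>0$, Markov's inequality applied to $e^{tX}$ gives $\mathbb{P}[X \geq a] \leq e^{-ta}\,\mathbb{E}[e^{tX}]$, and by independence together with $1+x \leq e^x$ we get $\mathbb{E}[e^{tX}] = \prod_{i=1}^n \bigl(1 + p_i(e^t-1)\bigr) \leq \prod_{i=1}^n e^{p_i(e^t-1)} = e^{\mu(e^t-1)}$. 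Setting $a = (1+\delta)\mu$ and optimising over $t$ (the minimiser is $t = \log(1+\delta)$) yields $\mathbb{P}[X \geq (1+\delta)\mu] \leq \exp\bigl(-\mu[(1+\delta)\log(1+\delta) - \delta]\bigr)$. The lower tail is handled symmetrically, using $t<0$ and $a = (1-\delta)\mu$, which gives $\mathbb{P}[X \leq (1-\delta)\mu] \leq \exp\bigl(-\mu[(1-\delta)\log(1-\delta) + \delta]\bigr)$.

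It then remains to verify the elementary one-variable inequalities $(1+\delta)\log(1+\delta) - \delta \geq \delta^2/3$ and $(1-\delta)\log(1-\delta) + \delta \geq \delta^2/3$, both valid for $0 \leq \delta \leq 1$; these follow by comparing the functions with $\delta^2/3$ via their Taylor expansions (equivalently, by checking the sign of the relevant derivatives on $[0,1]$). A union bound over the two tail events then gives $\mathbb{P}[X \neq (1\pm\delta)\mu] \leq 2e^{-\delta^2\mu/3}$, as claimed.

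Since this is a textbook estimate, there is essentially no obstacle; the only point requiring a moment's care is the calculus step, namely checking that the single constant $1/3$ works uniformly on $0 \leq \delta \leq 1$ for \emph{both} tails (the lower tail in fact admits the better constant $1/2$, but $1/3$ suffices and lets us state one clean bound). Everything else is routine, and in practice we simply invoke \cite[Corollary 2.3]{JLR} and move on.
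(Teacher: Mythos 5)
Your proof is correct, and it matches the paper's treatment: the paper does not prove this lemma at all but simply cites \cite[Corollary 2.3]{JLR}, which is exactly the reference you invoke. Your self-contained sketch (exponential moment method, optimising $t=\log(1+\delta)$, and the elementary verifications $(1+\delta)\log(1+\delta)-\delta\ge\delta^2/3$ and $(1-\delta)\log(1-\delta)+\delta\ge\delta^2/2\ge\delta^2/3$ on $[0,1]$, followed by a union bound) is the standard argument and is sound.
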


The following lemmas are standard results for random graphs.
They can be proved using Chernoff bounds and the fact that the considered random variables follow binomial distributions.

\begin{lemma}\label{lem: edges}
There exist constants  $C, c>0$ such that for any $p \geq C\log n/n$ the random graph $G= G_{n,p}$ a.a.s.~satisfies that for all $X, Y \subseteq V$ we have
\[|e_G(X,Y) - |X||Y|p + |X\cap Y|^2p/2| \le c\sqrt{|X||Y|np}\]
and
\[|e_G'(X,Y) - |X||Y|p| \le c\sqrt{|X||Y|np}.\]
\COMMENT{We make use of  the following Chernoff bound (see Theorem 2.1 in the Janson, Luczak, Rucinski book).
If $X \sim Bin(n,p)$ and $\lambda =\mathbb{E}[X]= np$, then for $t\ge 0$ we have
\begin{enumerate}[label=(\roman*)]
 \item $\mathbb{P}[X \ge \mathbb{E}[X] + t] \le e^{-t^2/(2\lambda + t/3)}$,\\
 \item $\mathbb{P}[X \le \mathbb{E}[X] - t] \le e^{-t^2/(2\lambda)}$.
\end{enumerate}
Note that both statements in \cref{lem: edges} are equivalent, so it suffices to prove one of them; we will prove the second.
We will only show that the probability that  $e_G'(X,Y) >  |X||Y|p + c\sqrt{|X||Y|np}$ is small - the other direction is similar (but much simpler, as then we can use $(ii)$ for the entire range of $p$).
Let $X$, $Y$ be sets of size $x$ and $y$, respectively, and let $p > \log n/n$.
Note that for $xy > n/p$ we have that $c\lambda > t$ (in the above where $\lambda = xyp$ and $t = c\sqrt{xynp})$.
Applying the first Chernoff bound we get that \[\mathbb{P}[e'_G(X,Y) >\lambda + t] \le e^{-t^2/(2\lambda + t/3)} < e^{-t^2/(\lambda + c\lambda/3)} < e^{-t^2/c\lambda} < e^{-2n}\] (for $c > 4$) and a union bound goes to zero ($2^{2n}e^{-2n}\rightarrow 0).$\\
Furthermore, for $xy < \log n$, we have that $\sqrt{xynp} \ge xy$, which is an upper bound on $e'_G(X,Y).$ 
So there is nothing to prove here.\newline\\
For the range $\log n < xy < n/p$ we do the following.
First, consider the following event.\\
(1) every vertex has degree $(1\pm 1/2)np$.\\
This has probability at least $1- o(1)$ for $p\ge C\log n/n$ where $C$ is a sufficiently large constant. \newline\\
Second, consider the following event. \\
(2) For all sets $X,Y$ with $|X||Y|> n/p$, we have $|e'_G(X,Y) - |X||Y|p|  \leq c\sqrt{|X||Y|np}$.\\
As mentioned above we can use Chernoff to show that the probability that (2) holds is at least $1-o(1)$.\newline\\
Third, consider the following event. \\
(3) For every pair of sets $X,Y$ with $|X|\leq |Y| \leq |X| np$ and $\log n \leq|X||Y|\leq n/p$, we have $e'_G(X,Y) - |X||Y|p \ge c\sqrt{|X||Y|np}$.
To estimate this probability, for each $x,y\in \mathbb{N}$, let $P(x,y)$ be the probability that there exist sets $X,Y$ with $|X|=x, |Y|=y$ and the number of edges between $X$ and $Y$ is wrong.
Note that $npxy\geq \log^2{n}$. 
Note also that $y \leq xnp$, so we have $ y \leq \sqrt{xynp}$.
Furthermore, note that \[\binom{n}{x}\binom{n}{y} < \binom{2n}{x}\binom{2n}{y} < \binom{2n}{y}^2 < \binom{2n}{\sqrt{xynp}}^{2} < (\frac{8n}{ \sqrt{npxy} })^{2 \sqrt{npxy} }.\]
We also have that \[\binom{xy}{xyp + c\sqrt{xynp}} p^{xyp + c\sqrt{xynp}} < (\frac{8xy}{xyp + c\sqrt{xynp}})^{xyp + c\sqrt{xynp}}p^{xyp + c\sqrt{xynp}} <(\frac{8xyp}{c\sqrt{xynp}})^{c\sqrt{xynp}},\] since $xyp \le \sqrt{xynp}$ in case (3). This allows us to write
\begin{align*} P(x,y) &\leq \binom{n}{x}\binom{n}{y} \binom{xy}{xyp + c\sqrt{xynp}} p^{xyp + c\sqrt{xynp} } \\ 
& \leq  (\frac{8n}{ \sqrt{npxy} })^{2 \sqrt{npxy} }  (\frac{8xy p}{c\sqrt{npxy}})^{c\sqrt{npxy}} \\
&\leq ( \frac{64 npxy}{cnpxy} )^{2\sqrt{npxy}} (  \frac{8\sqrt{xy p} }{c\sqrt{n} } )^{c\sqrt{npxy}-2\sqrt{npxy}} \\
&\leq (\frac{64}{c})^{2\log{n}} \cdot 1 = o(n^{-10})
\end{align*}
We assume $c$ is sufficiently large for the last equality to hold.
Hence, the probability that (3) holds is at least $1- \sum_{1\leq x\leq y\leq \max\{n, xnp\}}   P(x,y) \leq 1- n^2 o(n^{-10}) = 1- o(1).$
Assume (1), (2) and (3) hold.
Note that if we have two sets $X,Y$ with $|Y| \geq np |X|$ and $|X||Y|\leq n/p$
then 
we have
$e'_G(X,Y) \leq \sum_{x\in X} (3/2)np \leq 2np |X| \leq 2\sqrt{np|X||Y|}$.
Moreover, $\sqrt{np|X||Y|} \geq p|X||Y|$ as we have $|X||Y|\leq n/p$.
Hence,
$e'_G(X,Y) = p|X||Y| \pm 2\sqrt{np|X||Y|}$.
This shows what we want.}
\end{lemma}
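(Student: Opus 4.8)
The plan is to reduce both displayed bounds to a concentration estimate for $e_G'(X,Y)$ alone, and then to control its failure probability uniformly over all pairs of sets. First, the two displayed inequalities are equivalent up to the value of the constant: applying the second one with $X\cap Y$ in place of both $X$ and $Y$, and using that $e_G'(Z,Z)=2e_G(Z)$, bounds $\bigl|e_G(X\cap Y)-|X\cap Y|^2p/2\bigr|$ by $\tfrac c2|X\cap Y|\sqrt{np}\le\tfrac c2\sqrt{|X||Y|np}$, so the first inequality follows from the second (since $e_G'(X,Y)=e_G(X,Y)+e_G(X\cap Y)$) by the triangle inequality. Hence it is enough to prove that a.a.s.\ $\bigl|e_G'(X,Y)-|X||Y|p\bigr|\le c\sqrt{|X||Y|np}$ for all $X,Y\subseteq V$, for a possibly different absolute constant $c$. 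Fix such a pair, put $x\coloneqq|X|$ and $y\coloneqq|Y|$ with $x\le y$ (the quantity $e_G'$ is symmetric in its arguments). Writing $e_G'(X,Y)=e_G(X,Y)+e_G(X\cap Y)$ displays it as a sum of two binomials, each with at most $xy$ trials, whose combined mean $(xy-|X\cap Y|)p$ agrees with $xyp$ up to an additive error of at most $\sqrt{xynp}$. The problem thus reduces to showing that, for each pair, the probability that $e_G'(X,Y)$ deviates from $xyp$ by more than $c\sqrt{xynp}$ is small enough to survive a union bound over the relevant family of pairs. Throughout I would condition on the a.a.s.\ event (furnished by \cref{lem: chernoff} and a union bound over vertices) that $\Delta(G)\le\tfrac32 np$.

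The lower tail is uniform and easy: since the mean is at most $xyp$, \cref{lem: chernoff} bounds $\Pro[e_G'(X,Y)\le xyp-c\sqrt{xynp}]$ by $\exp(-\Omega(c^2 n))$, which beats the at most $4^n$ pairs once $c$ is large. For the upper tail I would split into cases. If $xy\ge n/p$, then choosing $t\coloneqq\tfrac c2\sqrt{xynp}$ we have $t\ge\tfrac c2 n$ (as $xynp\ge n^2$) and $t^2/(xyp)\ge\tfrac{c^2}{4}n$; since the mean is at most $xyp$, \cref{lem: chernoff} applied to the two binomials then gives a per-pair failure probability $\exp(-\Omega(cn))$, again beating $4^n$. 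Assume now $xy<n/p$. If $y\ge xnp$, then $xnp\le\sqrt{xynp}$ and the degree conditioning gives $e_G'(X,Y)\le x\Delta(G)\le\tfrac32 xnp\le2\sqrt{xynp}$ deterministically. If instead $y\le xnp$ and $xy<\log n$, then $e_G'(X,Y)\le xy\le\sqrt{xy}\,\sqrt{np}\le\sqrt{xynp}$ trivially. This leaves the one delicate regime: $\log n\le xy\le n/p$ and $x\le y\le xnp$.

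In this regime, $y\le xnp$ gives $y\le\sqrt{xynp}$, so by a standard estimate the number of pairs $(X,Y)$ with $|X|=x$ and $|Y|=y$ is at most $\binom nx\binom ny\le\bigl(8n/\sqrt{xynp}\bigr)^{2\sqrt{xynp}}$; and $xy\le n/p$ gives $xyp\le\sqrt{xynp}$, so a union/Chernoff bound for the upper tail of each of the two binomials at level $xyp+c\sqrt{xynp}$ is, up to a constant factor, at most $\binom{xy}{xyp+c\sqrt{xynp}}p^{\,xyp+c\sqrt{xynp}}\le\bigl(8xyp/(c\sqrt{xynp})\bigr)^{c\sqrt{xynp}}$. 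The crucial observation is that these two bases multiply to the constant $\tfrac{8n}{\sqrt{xynp}}\cdot\tfrac{8xyp}{c\sqrt{xynp}}=\tfrac{64}{c}$, so the product of the count and the per-pair probability telescopes to $\bigl(64/c\bigr)^{2\sqrt{xynp}}\cdot\bigl(8xyp/(c\sqrt{xynp})\bigr)^{(c-2)\sqrt{xynp}}$. The second factor is at most $1$ because $xyp\le n$, and since $xynp\ge\log^2 n$ here the first factor is at most $(64/c)^{2\log n}=n^{-\Omega(\log c)}$, which is $o(n^{-10})$ once $c$ is large. Summing over the at most $n^2$ pairs $(x,y)$ (and over the two binomials comprising $e_G'$) still gives $o(1)$, completing the proof.

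The step I expect to be the main obstacle is precisely this last regime. For moderately small sets the per-pair deviation probability is only quasipolynomially small in $n$, so the crude $4^n$ union bound is useless; one must first use the maximum-degree bound to discard the unbalanced pairs $y>xnp$, and then exploit that the number of set-pairs of the remaining sizes is itself only quasipolynomial, with logarithm on the same scale $\sqrt{xynp}$ as the exponent in the tail estimate, so the two cancel up to the tunable constant $c$. The two hypotheses that make this bookkeeping close are $y\le xnp$ (so that $\binom ny$ can be bounded in terms of $\sqrt{xynp}$ rather than of $n$) and $xy\ge\log n$ (so that the constant-base power $(64/c)^{2\sqrt{xynp}}$ becomes a genuine negative power of $n$). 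The remaining regimes, the lower tail, and the reductions to a one-sided bound for $e_G'$ and back to the statement about $e_G(X,Y)$ are all routine.
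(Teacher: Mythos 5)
Your proposal is correct and follows essentially the same route as the paper's proof: reduce everything to a tail bound for $e_G'$, dispose of the regimes $|X||Y|\ge n/p$ (Chernoff plus a $4^n$ union bound), $|X||Y|<\log n$ (trivial), and $|Y|>|X|np$ (maximum degree), and handle the remaining regime by the same telescoping first-moment computation in which $\binom{n}{x}\binom{n}{y}$ and the binomial tail combine to $(64/c)^{2\sqrt{xynp}}\le n^{-\Omega(\log c)}$. The only addition is your explicit derivation of the first displayed inequality from the second via $e_G'(Z,Z)=2e_G(Z)$ with $Z=X\cap Y$, which the paper leaves implicit.
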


\begin{lemma}\label{lem: mindeg}
For every $\eta>0$, there exists a constant $C$ such that for $p \ge C\log n/n$ the random graph $G = G_{n,p}$ a.a.s.~satisfies that $d_G(v) = (1\pm\eta) np$ for all $v \in V$.
\end{lemma}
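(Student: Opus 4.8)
The plan is to apply the Chernoff bound of \cref{lem: chernoff} to each vertex degree separately and then take a union bound over the $n$ vertices. First I would fix a vertex $v \in V$ and note that $d_G(v)$ is a sum of $n-1$ independent Bernoulli$(p)$ random variables, so $\mu \coloneqq \mathbb{E}[d_G(v)] = (n-1)p$. Setting, say, $\delta \coloneqq \eta/3$ (so that in particular the interval $(1\pm\delta)(n-1)p$ is contained in $(1\pm\eta)np$ once $n$ is large), \cref{lem: chernoff} gives $\mathbb{P}[d_G(v) \ne (1\pm\delta)\mu] \le 2e^{-\delta^2\mu/3}$.

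Next I would choose $C = C(\eta)$ large enough that $\delta^2(n-1)p/3 \ge 2\log n$ whenever $p \ge C\log n/n$; concretely, any $C > 27/\eta^2$ works for large $n$, since then $\delta^2(n-1)p/3 \ge (\eta^2/27)(1-1/n)C\log n \ge 2\log n$. This yields $\mathbb{P}[d_G(v) \ne (1\pm\delta)\mu] \le 2e^{-2\log n} = 2n^{-2}$ for every fixed $v$.

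Finally, a union bound over the $n$ vertices shows that the probability that some $v$ fails to satisfy $d_G(v) = (1\pm\delta)(n-1)p$ is at most $2n^{-1} = o(1)$. Since $(1\pm\delta)(n-1)p \subseteq (1\pm\eta)np$ for all sufficiently large $n$, this gives that a.a.s.\ $d_G(v) = (1\pm\eta)np$ for all $v \in V$, as claimed.

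I do not anticipate any genuine obstacle: the only mild point of care is the passage from the true mean $(n-1)p$ to the stated target $np$, which is absorbed by taking the Chernoff deviation parameter $\delta$ a constant factor smaller than $\eta$ (costing only a constant factor in $C$), together with the routine check that the resulting exponent exceeds $\log n$ with enough slack for the union bound to converge.
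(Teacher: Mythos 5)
Your proof is correct and is exactly the standard Chernoff-plus-union-bound argument the paper has in mind (it states this lemma without proof, noting only that it follows from Chernoff bounds applied to binomially distributed degrees). The one quibble is the explicit constant: with $\delta=\eta/3$ the exponent $\delta^2(n-1)p/3$ equals $(\eta^2/27)(n-1)p$, so forcing it to be at least $2\log n$ requires $C\ge 54/\eta^2$ rather than $C>27/\eta^2$ --- a harmless factor-of-two slip, since the lemma only asserts the existence of some $C$.
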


\begin{lemma}\label{lem: mindegGnm}
Let $A$ and $B$ be two disjoint sets of vertices with $|A|=n$, $|B|=m$ and $m=\Theta(n)$.
For every $\eta>0$, there exists a constant $C$ such that, for $p \ge C\log n/n$, the random graph $G=G_{A,B,p}$ a.a.s.~satisfies that for each $v \in A$ we have $d_G(v) = (1\pm\eta) mp$.
\end{lemma}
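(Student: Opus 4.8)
The plan is to reduce this to the Chernoff bound of \cref{lem: chernoff} applied vertex by vertex, followed by a union bound over $A$. First I would fix a vertex $v \in A$ and observe that, in $G = G_{A,B,p}$, the degree $d_G(v)$ counts exactly which of the $m$ potential edges from $v$ to $B$ are present; since these are included independently with probability $p$, the random variable $d_G(v)$ is a sum of $m$ independent Bernoulli$(p)$ variables with mean $\mu \coloneqq mp$. The hypothesis $m = \Theta(n)$ supplies a constant $c_0 > 0$ with $m \ge c_0 n$, so $\mu = mp \ge c_0 np \ge c_0 C \log n$.

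Next I would apply \cref{lem: chernoff} with $\delta = \eta$ (we may assume $\eta \le 1$, since the conclusion only weakens for larger $\eta$), which yields
\[
\mathbb{P}\bigl[d_G(v) \ne (1 \pm \eta)\mu\bigr] \le 2 e^{-\eta^2 \mu/3} \le 2 n^{-\eta^2 c_0 C/3}.
\]
Choosing $C = C(\eta)$ large enough that $\eta^2 c_0 C/3 \ge 2$ and taking a union bound over the $|A| = n$ choices of $v$ then gives
\[
\mathbb{P}\bigl[\exists\, v \in A :\, d_G(v) \ne (1 \pm \eta) mp\bigr] \le 2 n^{-1} \to 0,
\]
so a.a.s.\ every $v \in A$ satisfies $d_G(v) = (1 \pm \eta) mp$, as claimed.

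I do not expect a genuine obstacle here; the only point requiring care is that the Chernoff exponent must grow with $n$ fast enough to beat the union bound over $n$ vertices. This is precisely why we need $C$ to be large as a function of $\eta$, and why the assumption is $m = \Theta(n)$ rather than merely $m \to \infty$: it guarantees $mp = \Omega(\log n)$ with an implicit constant we can control.
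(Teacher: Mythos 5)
Your proof is correct and follows exactly the route the paper intends: the paper omits the proof, remarking only that such lemmas follow from Chernoff bounds applied to binomially distributed degrees, which is precisely your vertex-by-vertex application of \cref{lem: chernoff} with $\mu = mp = \Omega(\log n)$ followed by a union bound over $A$. Your handling of the two minor points of care (reducing to $\eta \le 1$ so that \cref{lem: chernoff} applies, and choosing $C$ large enough to beat the union bound) is also fine.
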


We now prove some properties of the subgraphs of the random graphs which satisfy the conditions of \cref{chvatal}.

\begin{proposition}\label{prop: neigh}
For every $0< \eps <1$, there exists $C>0$ such that for $p \ge C\log n /n$ the random graph $G=G_{n,p}$ a.a.s.~satisfies that, for all $H\in\mathcal{H}_{n,p}^{\eps,\eps}(G)$ and $G'\coloneqq G\setminus H$, the following hold:
\begin{enumerate}[label=(\roman*)]
    \item\label{neighitem1} For each $X \subseteq V$, we have 
    $|N_{G'} (X)| \ge  \min\{\eps |X| np/2, ~\eps n (\log n)^{-1/4}/2\}$.
    \item\label{neighitem3} For each $X\subseteq V$ with 
    $|X|\geq n(\log{n})^{-1/2}$,  we have that $|N_{G'}(X)|>(1-\epsilon^2/10)p^{-1} \delta_{G'}(X)$.
    In particular, $|N_{G'}(X)| \geq \eps n/2$.
    \item\label{neighitem4} $G'$ is connected.
\end{enumerate}
\end{proposition}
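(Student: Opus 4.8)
The plan is to fix a small constant $\eta$ with $1/C\ll\eta\ll\eps$ and condition on the a.a.s.\ events of \cref{lem: mindeg} (so $d_G(v)=(1\pm\eta)np$ for all $v$) and \cref{lem: edges}, together with one further a.a.s.\ event obtained by a routine first--moment/union--bound estimate in $G_{n,p}$: that for every $X\subseteq V$ with $|X|p\le 2(\log n)^{-1/4}$ one has $\sum_{w\in V\setminus X}\binom{e_G(w,X)}{2}\le(\eps/3)|X|np$ (note this sum equals $\sum_{\{a,b\}\subseteq X}$ of the number of common $G$-neighbours of $a,b$ outside $X$, and has expectation $\approx|X|^2np^2/2=o(|X|np)$). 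Throughout, the only property of $H$ I use is the bound $\Delta(H)\le(1-\eps)np$ recorded in \eqref{equa:maxdegree}; together with \cref{lem: mindeg} it gives $d_{G'}(v)\ge d_G(v)-\Delta(H)\ge(\eps-\eta)np$ for every $v$, so in particular $\delta(G')\ge\eps np/2$.

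\textbf{Part \ref{neighitem1}.} First I would reduce to the range $|X|p\le(\log n)^{-1/4}$: for larger $X$, pass to a subset $X_0\subseteq X$ with $|X_0|=\lceil(\log n)^{-1/4}/p\rceil$ and use $|N_{G'}(X)|\ge|N_{G'}(X_0)|\ge\eps|X_0|np/2\ge\eps n(\log n)^{-1/4}/2$. So assume $|X|p\le(\log n)^{-1/4}$. Every vertex of $X$ sends all of its $\ge\delta(G')\ge(\eps-\eta)np$ incident $G'$-edges into $N_{G'}(X)$; subtracting the contribution of edges inside $X$ (which is $2e_{G'}(X)\le 2e_G(X)\le|X|^2p+2c|X|\sqrt{np}=o(|X|np)$ by \cref{lem: edges} and the size restriction) shows $e_{G'}\big(X,\,N_{G'}(X)\setminus X\big)\ge(1-o(1))|X|(\eps-\eta)np$. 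On the other hand this quantity is $\sum_{w\in N_{G'}(X)\setminus X}e_{G'}(w,X)\le|N_{G'}(X)\setminus X|+\sum_{w\in V\setminus X}\big(e_G(w,X)-1\big)^{+}\le|N_{G'}(X)\setminus X|+\sum_{w\in V\setminus X}\binom{e_G(w,X)}{2}$, using $e_G(w,X)\ge1$ for $w\in N_{G'}(X)$ and the elementary inequality $t-1\le\binom t2$ for integers $t\ge1$. By our third event the last sum is at most $(\eps/3)|X|np$, so $|N_{G'}(X)\setminus X|\ge(1-o(1))|X|(\eps-\eta)np-(\eps/3)|X|np\ge\eps|X|np/2$.

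\textbf{Part \ref{neighitem3}.} Let $|X|\ge n(\log n)^{-1/2}$ and put $W:=N_{G'}(X)$. If $|W|>(1-\eps^2/10)p^{-1}\delta_{G'}(X)$ there is nothing to prove, and then $|W|>(1-\eps^2/10)(\eps-\eta)n\ge\eps n/2$, giving the ``in particular''; so suppose $|W|\le(1-\eps^2/10)p^{-1}\delta_{G'}(X)$. Since $\delta_{G'}(X)\le\Delta(G)\le(1+\eta)np$, this forces $|W|\le(1-\eps^2/20)n$, whence $|V\setminus W|\ge\eps^2n/20$. By definition of $W$ no $G'$-edge joins $V\setminus W$ to $X$, so $E_G(V\setminus W,X)\subseteq E(H)$; as $|X|$ and $|V\setminus W|$ are both linear in $n$, the error term in \cref{lem: edges} is negligible and $e_H(X,V\setminus W)=e_G(X,V\setminus W)\ge(1-o(1))|X|(n-|W|)p$. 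Hence some $v\in X$ has $d_H(v)\ge(1-o(1))(n-|W|)p$, so that $\delta_{G'}(X)\le d_{G'}(v)=d_G(v)-d_H(v)\le|W|p+(\eta+o(1))np$, i.e.\ $|W|\ge p^{-1}\delta_{G'}(X)-(\eta+o(1))n$. Using $p^{-1}\delta_{G'}(X)\ge(\eps-\eta)n$ and $\eta\ll\eps$ the subtracted term is at most $(\eps^2/10)p^{-1}\delta_{G'}(X)$, contradicting our assumption; this proves \ref{neighitem3}.

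\textbf{Part \ref{neighitem4} and the main obstacle.} Suppose $G'$ is disconnected and let $X$ be a smallest component, so $|X|\le n/2$ and $N_{G'}(X)\subseteq X$. If $|X|<n(\log n)^{-1/2}$ then \ref{neighitem1} already yields $|N_{G'}(X)|>|X|$, a contradiction; so $|X|\ge n(\log n)^{-1/2}$. Every $v\in X$ has all its $G$-neighbours outside $X$ inside $H$, so $e_G(v,X)\ge d_G(v)-d_H(v)\ge(\eps-\eta)np$; summing gives $e_G(X)\ge|X|(\eps-\eta)np/2$, which with \cref{lem: edges} forces $|X|\ge(\eps-2\eta)n$, while applying the same comparison to the cut $(X,V\setminus X)$ shows the average $H$-degree over $X$ is at least $(1-o(1))(n-|X|)p$. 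It then remains to derive a contradiction from the existence of such a set $X$ with $(\eps-2\eta)n\le|X|\le n/2$, equivalently to rule out a near-balanced bipartition all of whose crossing $G$-edges lie in $H$: the plan is to combine the large average $H$-degree on $X$ (which forces a linear number of vertices to have $H$-degree close to $(n-|X|)p$, hence indices near the top of the sorted $H$-degree sequence) with the cap $d_H(v_i)\le(n-i)p-\eps np$ from \eqref{equa:Posadegree} (or the transferred bound \eqref{equa:Chvataldegree}) at indices $i$ up to $\approx|X|$. I expect this last step to be the real obstacle: $\Delta(H)\le(1-\eps)np$ alone does not suffice, and the argument must use the full degree-sequence condition on $H$ in an \emph{averaged} form, the delicate point being that although individual crossing degrees $e_G(v,V\setminus X)$ need not concentrate over all bipartitions simultaneously, their sum over either side of the cut does, by \cref{lem: edges}, since both sides are linear in $n$.
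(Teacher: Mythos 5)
Your part (ii) is correct and is essentially the paper's edge-counting argument run in contrapositive form (the paper bounds $e_G'(X,Y)$ for a set $Y\supseteq N_{G'}(X)$ of prescribed size and compares with $e_{G'}'(X,V)\ge|X|\delta_{G'}(X)$; you instead lower-bound the $G$-edges into $V\setminus N_{G'}(X)$, all of which must lie in $H$, and contradict $\Delta(H)\le(1-\eps)np$). However, there are two genuine problems elsewhere. In part (i), the auxiliary event you condition on is false: take $X=N_G(v)$ for any vertex $v$ (for $p$ near $C\log n/n$ this satisfies $|X|p\le 2(\log n)^{-1/4}$), and the single term $w=v$ already contributes $\binom{|X|}{2}\approx|X|np/2>(\eps/3)|X|np$ to $\sum_{w}\binom{e_G(w,X)}{2}$. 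So this event fails a.a.s.\ rather than holding a.a.s.; moreover, even a correct version would not follow from a first-moment/Markov bound, since the failure probability for a single $X$ would not beat the $\binom{n}{|X|}$ union bound. The quantity you actually need is $\sum_{w\in V\setminus X}(e_G(w,X)-1)^{+}=e_G(X,V\setminus X)-|N_G(X)\setminus X|$, and bounding it is exactly equivalent to the vertex-expansion statement $|N_G(X)|\ge(1-\eps/2)|X|np$ that the paper imports from Lee--Sudakov and proves by a Chernoff bound on the sum of the independent indicators $\1[w\in N_G(X)]$, $w\notin X$. Your passage from $\binom{t}{2}$ back to $(t-1)^{+}$ is far too lossy when some $w$ has many neighbours in $X$.

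The larger gap is part (iii), which you explicitly leave open, and it is precisely the step where the full degree-sequence hypothesis (rather than just $\Delta(H)\le(1-\eps)np$) must enter. The missing idea is to apply the dichotomy of \cref{chvatal} at the index $m\coloneqq|X|-\eps n/4$ and then feed the result back into part (ii). If \eqref{equa:Posadegree} holds at $m$, then all but the $m$ vertices of largest $H$-degree satisfy $d_H(v)\le(n-m)p-\eps np$, so $X$ (having $|X|>m$ elements) contains a set $X'$ of size $\eps n/4$ with $\delta_{G'}(X')\ge(1-\eta)np-(n-m)p+\eps np\ge mp+\eps np/2$; part (ii) then gives $|N_{G'}(X')|\ge(1-\eps^2/10)(m+\eps n/2)>|X|$, contradicting that $X$ is a component. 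If instead \eqref{equa:Chvataldegree} holds at $m$, there are at least $m+\eps n\ge|X|+\eps n/2$ vertices with $d_H(v)\le mp-\eps np$, so at least $\eps n/2$ of them lie \emph{outside} $X$ and form a set $Y$ with $\delta_{G'}(Y)\ge(n-m)p+\eps np/2$; part (ii) applied to $Y$ gives $|N_{G'}(V\setminus X)|\ge|N_{G'}(Y)|>|V\setminus X|$, again a contradiction. Your proposed route via the \emph{average} $H$-degree on $X$ does not obviously close: the degree-sequence condition caps the $i$-th largest $H$-degree, and to exploit it you must locate many specific vertices of small $H$-degree inside $X$ (or inside $V\setminus X$), not merely control an average; the clean way to do that is the index-threshold argument above.
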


\begin{proof}
Choose a number $0<\eta \ll \epsilon$.
Consider the event that for all $v\in V$ we have
\begin{align}\label{equa:XYedges1.61}
d_{G}(v) = (1\pm \eta)np
\end{align}
and for all $X,Y\subseteq V$ with $|X|\geq n(\log{n})^{-1/2}$ and $|Y|\geq \eta n$ we have
\begin{align}\label{equa:XYedges1.62}
e'_{G}(X,Y) = (1 \pm \eta)|X||Y|p.
\end{align}
Throughout the proof, we condition on the event that \eqref{equa:XYedges1.61} and \eqref{equa:XYedges1.62} hold.
Note that \cref{lem: edges,lem: mindeg} imply that such an event a.a.s.~occurs.

\textbf{\ref{neighitem1}.} To prove \ref{neighitem1}, a simple calculation (see e.g.~\cite[Proposition 2.5(i)]{LS12}\COMMENT{Fix a set $X\subseteq V$ of size $|X|\leq\lceil(\log n)^{-1/4} p^{-1}\rceil$. 
For each $v\in V\setminus X$, let $Y_v$ be the indicator random variable of the event that $v\in N_G(X)$.
Then, \[\mathbb{P}[Y_v=1]=1-(1-p)^{|X|}=(1+o(1))|X|p\]
(the estimate follows from the fact that $|X|p=o(1)$).
Let $Y\coloneqq|N_G(X)|=\sum_{v\in V}Y_v$ and note that
\[\mathbb{E}[Y]=(1+o(1))|X|np.\]
Since the variables $Y_v$ are mutually independent, we can apply a Chernoff inequality to get \[\mathbb{P}[|Y-\mathbb{E}[Y]|\geq(\eps/4)\mathbb{E}[Y]]\leq e^{-\Omega_\eps(\mathbb{E}[Y])}.\]
Together with the previous estimate on $\mathbb{E}[Y]$, we conclude that
\[\mathbb{P}[Y\leq(1-\eps/2)|X|np]\leq e^{-\Omega_\eps(|X|np)}.\]
Since $np\geq C\log n$, $\mathbb{P}[Y\leq(1-\eps/2)|X|np]\leq n^{-C'|X|}$, where $C'=C'(\eps,C)$ can be made arbitrarily large by choosing $C$ appropriately.\\
Taking the union bound over all choices of $X$ we get
\[\sum_{1\leq|X|\leq(\log n)^{-1/4} p^{-1}}n^{-C'|X|}\leq\sum_{k=1}^n\binom{n}{k}n^{-C'k}\leq\sum_{k=1}^n\left(\frac{en}{k}n^{-C'}\right)^k=o(1),\]
which finishes the proof.}) shows that a.a.s.~for all $X \subseteq V$ of size at most $\lceil(\log n)^{-1/4} p^{-1}\rceil$,
\begin{equation}\label{equa:2.5.i}
    |N_G(X)| \ge (1-\eps/2) |X|np.
\end{equation}
As $H\in \mathcal{H}_{n,p}^{\eps,\eps}$, \eqref{equa:maxdegree} together with \eqref{equa:2.5.i} implies
\[|N_{G'}(X)| \ge |N_G(X)| - (1 - \eps)np |X| \ge \eps np |X|/2.\]
Given a set $X \subseteq V$ of size at least $(\log n)^{-1/4} p^{-1}$, we can choose a subset $X' \subseteq X$ of size $\lceil(\log n)^{-1/4} p^{-1}\rceil$, and apply the bound above to obtain $|N_{G'}(X)| \ge |N_{G'}(X')| \ge \eps n (\log n)^{-1/4}/2$.
This proves \ref{neighitem1}.

\textbf{\ref{neighitem3}.} As $H\in \mathcal{H}_{n,p}^{\eps,\eps}$, \eqref{equa:XYedges1.61} together with \eqref{equa:maxdegree} implies that $\delta_{G'}(X) \geq (\eps -\eta) np$.
For each $X\subseteq V$, we have
\begin{align}\label{equa:expansionlemma}
    e'_{G'}(X,V) &\geq |X|\delta_{G'}(X).
\end{align}
Suppose that there is a set $X\subseteq V$ with $|X|\geq n(\log{n})^{-1/2}$ and $|N_{G'}(X)| \leq (1-\eps^2/10) p^{-1} \delta_{G'}(X) $.
Let $Y\subseteq V$ be a set containing $N_{G'}(X)$ with $|Y|=(1-\eps^2/10) p^{-1}\delta_{G'}(X) \geq \eta n$\COMMENT{$|Y|=(1-\eps^2/10) p^{-1}\delta_{G'}(X)  \geq (1-\eps^2/10) (\eps-\eta) n  \geq \eta n$}.
Hence, \eqref{equa:XYedges1.62} implies that 
\[e'_{G}(X,Y) \leq (1 + \eta)p|X| (1-\eps^2/10) p^{-1}\delta_{G'}(X)
\leq (1- \eps^2/20) |X|  \delta_{G'}(X) \stackrel{\mathclap{\eqref{equa:expansionlemma}}}{<} e'_{G'}(X,V),\]
a contradiction to the fact that $N_{G'}(X)\subseteq Y$.
In particular, as  $\delta_{G'}(X) \geq (\epsilon -\eta) np$, we have 
$|N_{G'}(X)| \geq (1-\eps^2/10)(\eps-\eta) n \geq \epsilon n/2$.
This proves \ref{neighitem3}.

\textbf{\ref{neighitem4}.} Condition on the event that statements \ref{neighitem1} and \ref{neighitem3} hold, in addition to \eqref{equa:XYedges1.61} and \eqref{equa:XYedges1.62}.
Assume that $G'$ is not connected, and let $X \subseteq V$ be a (connected) component of $G'$ such that $|X|\leq n/2$.
Note that $|N_{G'}(X)| = |X|$.
As \ref{neighitem1} and \ref{neighitem3} both hold, it is easy to see that $|X|\geq \epsilon n/2$\COMMENT{Suppose otherwise.
Assume first that $|X|<n(\log n)^{-1/2}$.
Then, by \ref{neighitem1} we have that $|N_{G'} (X)| \ge  \min\{\eps |X| np/2, ~\eps n (\log n)^{-1/4}/2\}>|X|$, a contradiction to the fact that $X$ is a component of $G'$.
So we must have that $|X|\geq n(\log n)^{-1/2}$.
But then, by \ref{neighitem3} we have that $N_{G'}(X)\geq\eps n/2$, a contradiction.}.
Let $m\coloneqq |X|- \epsilon n/4 \geq \epsilon n/4$.

As $H\in \mathcal{H}_{n,p}^{\epsilon,\epsilon}$, by \cref{chvatal} there exists a labelling $v_1,\dots, v_n$ of $V$ with $d_H(v_1)\geq \ldots \geq d_H(v_n)$ such that we have either
\begin{equation}\label{eq: posa chvatal}
d_H(v_m) \leq (n-m)p - \eps np \qquad\text{ or }\qquad 
d_{H}(v_{n-m-\epsilon n}) \leq mp -\eps np.
\end{equation}
If the former is true, then there exists a set $X'\subseteq X\cap \{v_{m},\dots, v_{n}\}$ with $|X'| = \epsilon n/4$ and 
\[\delta_{G'}(X') \stackrel{\mathclap{\eqref{equa:XYedges1.61}}}{\geq} (1-\eta)np -  (n-m)p + \eps np \geq  mp + \eps np/2.\]
Then, \ref{neighitem3} ensures that $|N_{G'}(X')|\geq (1-\epsilon^2/10)(m+ \epsilon n/2)\geq m + \epsilon n/3 >|X|$, a contradiction to the fact that $X$ is a component of $G'$.

Hence, we may assume that the latter of \eqref{eq: posa chvatal} holds.
In this case, there are at least $m+ \epsilon n \geq |X|+\eps n/2$ vertices $v$ with $d_{H}(v) \leq mp - \eps np$, hence there exists a set $Y\subseteq\{v_{n-m-\epsilon n},\dots, v_{n}\}\setminus X$ with $|Y|\geq\eps n/2$ and
\[\delta_{G'}(Y) \stackrel{\mathclap{\eqref{equa:XYedges1.61}}}{\geq} (1-\eta)np -  mp + \eps np \geq  (n-m) p + \eps np/2.\]
Then, \ref{neighitem3} ensures that $|N_{G'}(V\setminus X)| \geq |N_{G'}(Y)| \geq (1-\epsilon^2/10)(n-m+ \epsilon n/2)\geq n-m + \epsilon n/3 >|V\setminus X|$, a contradiction to the fact that $X$ is a component of $G'$.
\end{proof}


\section{Chv\'atal-type resilience for matchings in random graphs}

\begin{proof}[Proof of \cref{teor:ChvMatch}]
Let $0<1/n \ll  1/C \ll\eta\ll\eps\ll1$ and $1/c <1$, where $n$ is even and $c$ is the constant given by \cref{lem: edges}.
We condition on the event that $G=G_{n,p}$ satisfies the assertions of \cref{lem: edges}, \cref{lem: mindeg} and \cref{prop: neigh} with the chosen constants $\eps$, $\eta$, $C$ and $c$, which happens a.a.s. 
We will show that all such $G$ are $(\eps,\eps)$-Chv\'atal-resilient with respect to containing a perfect matching.
Let $H\in\mathcal{H}^{\eps,\eps}_{n,p}(G)$ and let $G'\coloneqq G\setminus H$.
Let $v_1,\ldots,v_n$ be an ordering of the vertices as in \cref{chvatal}.
Let $D(H)\coloneqq\{ v_{ \lceil n/2\rceil} ,\dots, v_n\}$.
In particular, by \cref{lem: mindeg} we have that
\begin{equation}\label{equa:mindegMatch2}
    \delta_{G'}(D(H))\geq (1+\eps)np/2.
\end{equation}
By Tutte's theorem, it suffices to show that, for any vertex set $U\subseteq V$, the number of odd components of $G'-U$ is at most $|U|$ (here a component is odd if it contains an odd number of vertices).
As we conditioned on the assertion of \cref{prop: neigh}\ref{neighitem4} and since $n$ is even, this holds if $U$ is the empty set.

Hence, we will prove that, for any non-empty $U\subseteq V$, the number of (not necessarily odd) components of $G'-U$ is at most $|U|$.
As each component of $G'-U$ has at least one vertex, we may further assume that $|U|<n/2$.

Let $U\subseteq V$ with $|U|< n/2$ and let $k$ be the total number of components of $G'-U$.
To derive a contradiction, assume that $k>|U|$; in particular, $k\geq 2$.
Enumerate the components in $G'-U$ as $C_1,\ldots,C_k$ with $|C_1|\leq |C_2| \leq \ldots \leq |C_k|$.
For each $S\subseteq[k]$, let $C_S\coloneqq\bigcup_{i\in S}C_i$.
We consider the cases where $|U|$ is small and large separately.

\vspace{8 pt}

\noindent  \textbf{Case 1:} $|U| \le \eps n/10$.

First, we prove that $|C_k|$ is large in this case.
\begin{claim}
We have $|C_k| > n/2$.
\end{claim}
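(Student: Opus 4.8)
The plan is to argue by contradiction: suppose $|C_k|\le n/2$, so that \emph{every} component of $G'-U$ has at most $n/2$ vertices. The first step is to extract a set $X$ which is a union of some of the $C_i$ and satisfies $\eps n/8\le |X|\le n/2$. This is easy: if $|C_k|\ge \eps n/8$ take $X\coloneqq C_k$ (here we use the assumption $|C_k|\le n/2$); otherwise every component has fewer than $\eps n/8$ vertices, so adding components to $X$ one by one until $|X|$ first reaches $\eps n/8$ produces a set with $\eps n/8\le |X|<\eps n/4\le n/2$ (the process cannot run out of components since $\sum_i|C_i|=n-|U|\ge n/2$). The point of $X$ is that both $X$ and $V\setminus(X\cup U)$ are unions of components of $G'-U$, so $N_{G'}(X)\subseteq X\cup U$ and $N_{G'}(V\setminus(X\cup U))\subseteq V\setminus X$; these two ``trapping'' inclusions are what we play off against the Chv\'atal-type degree condition on $H$ and the expansion property of \cref{prop: neigh}\ref{neighitem3}.

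Set $m\coloneqq|X|-\eps n/8$, so $1\le m<n/2$ (using $|X|\le n/2$), and apply \cref{chvatal} with $i=m$: either $d_H(v_m)\le (n-m)p-\eps np$ or $d_H(v_{n-m-\eps n})\le mp-\eps np$. In the first case, the non-increasing order of the $d_H(v_i)$ gives $d_H(v_i)\le (n-m)p-\eps np$ for all $i\ge m$, and at least $|X|-(m-1)=\eps n/8+1$ of $v_m,\dots,v_n$ lie in $X$, so we may pick $X'\subseteq X\cap\{v_m,\dots,v_n\}$ with $|X'|=\eps n/8\ge n(\log n)^{-1/2}$. By \cref{lem: mindeg}, every $v\in X'$ has $d_{G'}(v)\ge(1-\eta)np-((n-m)p-\eps np)\ge(m+\eps n/2)p$, so $\delta_{G'}(X')\ge(m+\eps n/2)p$, and \cref{prop: neigh}\ref{neighitem3} yields $|N_{G'}(X')|>(1-\eps^2/10)(m+\eps n/2)>m+\eps n/3$. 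But $N_{G'}(X')\subseteq N_{G'}(X)\subseteq X\cup U$, so $|N_{G'}(X')|\le|X|+|U|=m+\eps n/8+|U|\le m+\eps n/8+\eps n/10<m+\eps n/3$, a contradiction. The second case is symmetric: all of $v_{n-m-\eps n},\dots,v_n$ have $H$-degree at most $mp-\eps np$, and at least $(m+\eps n)-|X|-|U|\ge 7\eps n/8-\eps n/10>3\eps n/4$ of them lie in $V\setminus(X\cup U)$; choosing such a set $Y$ with $|Y|=3\eps n/4$, each $v\in Y$ has $d_{G'}(v)\ge(1-\eta)np-(mp-\eps np)\ge((n-m)+\eps n/2)p$, so \cref{prop: neigh}\ref{neighitem3} gives $|N_{G'}(Y)|>(1-\eps^2/10)((n-m)+\eps n/2)>n-m+\eps n/3$, whereas $N_{G'}(Y)\subseteq N_{G'}(V\setminus(X\cup U))\subseteq V\setminus X$ forces $|N_{G'}(Y)|\le n-|X|=n-m-\eps n/8$, again a contradiction. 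Hence $|C_k|>n/2$.

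The main obstacle is calibrating the constants, i.e.\ choosing the index shift (here $\eps n/8$) correctly: it must be large enough that after discarding $v_1,\dots,v_{m-1}$ (resp.\ keeping only the tail $v_{n-m-\eps n},\dots,v_n$) one can still find $\Theta(\eps n)$ vertices of high $G'$-degree inside $X$ (resp.\ outside $X\cup U$) to feed into \cref{prop: neigh}\ref{neighitem3}, yet small enough that the expansion gain of order $\eps n$ produced by \cref{prop: neigh}\ref{neighitem3} strictly beats the shift plus $|U|$ --- this is exactly where the Case~1 hypothesis $|U|\le\eps n/10$ enters, and why a cruder shift such as $\eps n/4$ would not close the argument. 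A secondary, but routine, point is the dichotomy handled in the first paragraph, ensuring that $X$ can always be chosen with $\eps n/8\le|X|\le n/2$ whether the components are all tiny or one of them is comparatively large (but still $\le n/2$ by the assumption under contradiction). Finally, \eqref{equa:maxdegree} and the conditioning on \cref{lem: mindeg} are used only to pass between $H$-degrees and $G'$-degrees.
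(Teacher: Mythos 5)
Your proof is correct, but it takes a genuinely different route from the paper's. The paper never invokes the Chv\'atal dichotomy \eqref{eq: posa chvatal} inside this claim: it only uses the single consequence \eqref{equa:mindegMatch2}, namely that every vertex of $D(H)=\{v_{\lceil n/2\rceil},\dots,v_n\}$ has $G'$-degree at least $(1+\eps)np/2$. It then considers the family $\mathcal{S}$ of unions of components containing at least $\eps n$ vertices of $D(H)$, takes $S^*\in\mathcal{S}$ minimising $|C_{S^*}|$, shows by a splitting/complementation argument that $|C_{S^*}|\le n/2$, and applies \cref{prop: neigh}\ref{neighitem3} to $D=C_{S^*}\cap D(H)$ to get $|N_{G'}(D)|>n/2+|U|$, which cannot fit inside $C_{S^*}\cup U$. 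You instead extract a union of components $X$ with $\eps n/8\le|X|\le n/2$, apply the degree dichotomy at the tailored index $i=|X|-\eps n/8$, and in each branch trap a large high-$G'$-degree set inside $X\cup U$ or inside $V\setminus X$; this is precisely the template the paper itself uses for \cref{claim: ChvMatch2} and for the connectivity proof of \cref{prop: neigh}\ref{neighitem4}, so it is a natural and uniform alternative. What the paper's argument buys is brevity and the fact that it only needs the weak ``bottom-half minimum degree'' consequence of the degree sequence; what yours buys is that it avoids the minimal-$S^*$ trick and reuses machinery already present elsewhere in the proof. Your constants all check out (e.g.\ $1/8+1/10<1/3$ and $1-1/8-1/10>3/4$), and both sets fed into \cref{prop: neigh}\ref{neighitem3} comfortably exceed $n(\log n)^{-1/2}$. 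The only blemish is the degenerate case $|X|=\eps n/8$, where $m=0$ is not a valid index for \cref{chvatal}; this is a rounding triviality (apply the condition at $i=1$, or shift $m$ by one) of the kind the paper explicitly ignores.
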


\begin{proof}
Suppose otherwise that $|C_k|\leq n/2$.
Let 
\[\mathcal{S}\coloneqq\{S\subseteq[k]:|C_S\cap D(H)|\geq\eps n\}.\] 
Let $S^*\in\mathcal{S}$ be a set in $\mathcal{S}$ with the minimum $|C_{S^{*}}|$.
We claim that $|C_{S^*}| \le n/2$.
Indeed, suppose this is not the case.
Then, we have $|S^*|\geq 2$.
As a partition of $S^*$ into two non-empty sets yields two disjoint sets not in $\mathcal{S}$, we have $|C_{S^*}\cap D(H)|<2\eps n$.
Thus $C_{[k]\setminus S^*}$ satisfies that $|C_{[k]\setminus S^*}|\leq n/2$ and $|C_{[k]\setminus S^*}\cap D(H)|\geq n/2-3\eps n$\COMMENT{$2\eps n$ are in $C_{S^*}$ and a few more may be in $U$.} so we have $[k]\setminus{S^*}\in\mathcal{S}$, which contradicts the minimality of $C_{S^*}$. Hence we have $|C_{S^*}| \le n/2$.

Let $D\coloneqq C_{S^*}\cap D(H)$.
As we have $|D|\geq\eps n$, by \eqref{equa:mindegMatch2} and \cref{prop: neigh}\ref{neighitem3} we have
\[|N_{G'}(D)|>(1-\eps^2/10) (1+\eps) n/2 >n/2+|U|.\]
It follows that at least one vertex $v\in D\subseteq C_{S^*}$ is adjacent to a vertex $u\in C_{[k]\setminus S^*}$, a contradiction.
This proves the claim.
\end{proof}

Let $\ell\coloneqq|C_{[k-1]}|$.
Note that $\ell<n/2$.

\begin{claim}\label{claim: ChvMatch2}
We have $\ell < \eps n/6$. 
\end{claim}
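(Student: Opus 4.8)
To prove \cref{claim: ChvMatch2} I would argue by contradiction, essentially rerunning the argument from the proof of \cref{prop: neigh}\ref{neighitem4} with the set $U$ treated as a small perturbation. So suppose $\ell\ge\eps n/6$ and put $X\coloneqq C_{[k-1]}$, so that $|X|=\ell$. Since $C_1,\dots,C_{k-1}$ are components of $G'-U$, every $G'$-neighbour of a vertex of $X$ lies in $X\cup U$; hence $|N_{G'}(X')|\le\ell+|U|$ for every $X'\subseteq X$, and symmetrically $|N_{G'}(Y)|\le|C_k|+|U|=n-\ell$ for every $Y\subseteq C_k$. Recall also that $|C_k|>n/2$ by the previous claim, so that $\ell<n/2$, and that in Case~1 we have $|U|\le\eps n/10$. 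The first step is to bootstrap the lower bound on $\ell$: as $\ell\ge\eps n/6\ge n(\log n)^{-1/2}$ and $\delta_{G'}(X)\ge(\eps-\eta)np$ (by \cref{lem: mindeg} and \eqref{equa:maxdegree}), \cref{prop: neigh}\ref{neighitem3} gives $|N_{G'}(X)|>(1-\eps^2/10)(\eps-\eta)n$, and combining this with $|N_{G'}(X)|\le\ell+\eps n/10$ and $\eta\ll\eps\ll1$ forces $\ell>4\eps n/5$.

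The second step is to apply the shifted Chv\'atal condition at the index $m\coloneqq\ell-\eps n/4$; note that $0<m<\ell<n/2$, so one of \eqref{equa:Posadegree}, \eqref{equa:Chvataldegree} holds with $i=m$. If \eqref{equa:Posadegree} holds, I would choose $X'\subseteq X\cap\{v_m,\dots,v_n\}$ with $|X'|=\eps n/4$ (possible since this intersection has at least $\ell-m+1=\eps n/4+1$ vertices), so that $\delta_{G'}(X')\ge(1-\eta)np-d_H(v_m)\ge(m+\eps n-\eta n)p$; then \cref{prop: neigh}\ref{neighitem3} yields
\[|N_{G'}(X')|>(1-\eps^2/10)(m+\eps n-\eta n)\ge m+\eps n/2=\ell+\eps n/4>\ell+|U|,\]
contradicting $N_{G'}(X')\subseteq X\cup U$. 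If instead \eqref{equa:Chvataldegree} holds, then there are at least $m+\eps n+1$ vertices (namely $v_{n-m-\eps n},\dots,v_n$) of $H$-degree at most $mp-\eps np$, and after discarding the at most $\ell+|U|$ of them lying in $X\cup U$ at least $m+\eps n+1-\ell-|U|\ge\eps n/2$ of them remain inside $C_k$; picking such a set $Y\subseteq C_k$ with $|Y|=\eps n/2$ gives $\delta_{G'}(Y)\ge(n-m+\eps n-\eta n)p$, whence \cref{prop: neigh}\ref{neighitem3} yields
\[|N_{G'}(Y)|>(1-\eps^2/10)(n-m+\eps n-\eta n)\ge n-m+\eps n/2>n-\ell,\]
contradicting $N_{G'}(Y)\subseteq C_k\cup U$. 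In both cases we obtain a contradiction, proving the claim.

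The argument is almost entirely routine bookkeeping, so I do not expect a genuine obstacle. The one thing to watch is that the additive $|U|$- and $\eta n$-slack must never swallow the gains produced by \cref{prop: neigh}\ref{neighitem3}; this is exactly where $|U|\le\eps n/10$ (i.e.\ that we are in Case~1) and the hierarchy $\eta\ll\eps\ll1$ get used. One also needs both sets $X'$ and $Y$ to have size at least $\eps n/4\gg n(\log n)^{-1/2}$ so that \cref{prop: neigh}\ref{neighitem3} is applicable, and one needs $0<m<n/2$ so the Chv\'atal condition can be invoked at $i=m$; the latter is the only mildly delicate point, and it is precisely what forces the preliminary bootstrap $\ell>4\eps n/5$ of the first step (to get $m>0$), combined with $\ell<n/2$ from the previous claim (to get $m<n/2$).
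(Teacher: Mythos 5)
Your proof is correct and follows essentially the same route as the paper's: a dichotomy on whether \eqref{equa:Posadegree} or \eqref{equa:Chvataldegree} holds at an index just below $\ell$, followed in each case by an application of \cref{prop: neigh}\ref{neighitem3} to a large subset of $C_{[k-1]}$ (respectively $C_k$) with boosted minimum degree, contradicting $N_{G'}(C_{[k-1]})\subseteq C_{[k-1]}\cup U$ (respectively $N_{G'}(Y)\subseteq C_k\cup U$). The only differences are parametric: you test the condition at the single index $m=\ell-\eps n/4$, which forces your preliminary bootstrap $\ell>4\eps n/5$ to ensure $m>0$, whereas the paper tests it over the window $[\ell]\setminus[\ell-\eps n/8]$ and thereby avoids the bootstrap entirely.
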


\begin{proof}
Assume otherwise that $\ell \ge\eps n/6$.

First, assume that $H$ satisfies \eqref{equa:Posadegree} for all $i\in [\ell]\setminus [\ell-\eps n/8]$\COMMENT{i.e.~P\'osa degree sequence}.
Note that the set $C' \coloneqq C_{[k-1]} \setminus \{v_1,\dots, v_{\ell-\eps n/8}\}$ satisfies $|C'|\geq\eps n/8$.
Because $G$ satisfies the assertion of \cref{lem: mindeg} and $v_{\ell-\eps n/8+1}$ satisfies \eqref{equa:Posadegree} for $H$, we have
\[\delta_{G'}(C')\geq \delta_{G}(V)-d_H(v_{\ell-\eps n/8+1}) \geq (1-\eta)np - ((n- \ell+\eps n/8-1)p - \eps np) \geq \ell p+3\eps np/4.\]
As $G'$ satisfies the assertion of \cref{prop: neigh}\ref{neighitem3}, we have
\[|N_{G'}(C_{[k-1]})|\geq|N_{G'}(C')|\geq(1-\eps^2/10)(\ell+ 3\eps n/4)\geq
\ell+\eps n/2>|C_{[k-1]}|+|U|,\]
a contradiction as $C_k$ and $C_{[k-1]}$ are disconnected in $G'-U$.

So suppose that there is an index $j\in[\ell]\setminus[\ell-\eps n/8]$ such that $H$ does not satisfy \eqref{equa:Posadegree} for $j$\COMMENT{i.e.~we have a Chv\'atal degree sequence}.
We have that the set $C'' \coloneqq C_k\setminus \{v_1,\dots, v_{ n-j-\eps n }\}$ satisfies 
\[|C''| \geq |C_k| - (n-j-\eps n) = n - \ell - |U| -(n-j-\eps n) \geq \eps n/4.\]
Here, we obtain the final inequality as $|U| \leq \eps n/10$ and $ j\geq \ell -\eps n/8$.
Moreover, because $G$ satisfies the assertion of \cref{lem: mindeg}, the fact that \eqref{equa:Chvataldegree} holds for $j$ implies that 
\[\delta_{G'}(C'') \geq \delta_{G}(V)-d_H(v_{n-j-\eps n+1}) \geq (1-\eta)np - (j -\eps n)p \geq
(n-j+\eps n/2)p.\]
As $G'$ satisfies the assertion of \cref{prop: neigh}\ref{neighitem3}, this shows that
\[|N_{G'}(C'')|> (1-\eps^2/10)(n-j+\eps n/2) \geq n-\ell+\eps n/6>|C_k|+|U|,\]
a contradiction to the fact that $C_k$ is a component of $G'-U$. 
This proves the claim.
\end{proof}

It follows from the previous two claims that $G'-U$ has one `giant' component $C_k$, containing more than $(1-\eps/3)n$ vertices. 
The following claim will give us the desired contradiction.

\begin{claim}\label{claim: ChvMatch3}
For any set $W \subseteq V$ with $|W| < \varepsilon n/6$, we have that $|N_{G'}(W)| > 2|W|$.
\end{claim}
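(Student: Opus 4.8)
The plan is to read the claim straight off \cref{prop: neigh}, on whose conclusions we have already conditioned in this proof. Since $|W| < \eps n/6$, it suffices to bound $|N_{G'}(W)|$ from below by something exceeding $2|W|$ (so, exceeding $\eps n/3$ once $W$ is comparable to $\eps n$), and the natural split is according to whether $|W| \ge n(\log n)^{-1/2}$ or not.

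First I would dispose of the case $|W| \ge n(\log n)^{-1/2}$. Here \cref{prop: neigh}\ref{neighitem3} applies directly and gives $|N_{G'}(W)| \ge \eps n/2$; since $|W| < \eps n/6$ we have $2|W| < \eps n/3 < \eps n/2$, which already yields $|N_{G'}(W)| > 2|W|$.

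Next, for $1 \le |W| < n(\log n)^{-1/2}$ I would invoke \cref{prop: neigh}\ref{neighitem1}, which gives $|N_{G'}(W)| \ge \min\{\eps |W| np/2,\ \eps n(\log n)^{-1/4}/2\}$, and check that both terms beat $2|W|$. The first term does because $np \ge C\log n$ with $C$ large, so (by our hierarchy) $\eps np/2 > 2$ and hence $\eps |W| np/2 > 2|W|$. For the second term, $|W| < n(\log n)^{-1/2}$ gives $2|W| < 2n(\log n)^{-1/2}$, and $\eps n(\log n)^{-1/4}/2 > 2n(\log n)^{-1/2}$ as soon as $(\log n)^{1/4} > 4/\eps$, i.e.\ for $n$ sufficiently large. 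So the minimum exceeds $2|W|$, and the claim follows.

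I do not expect a genuine obstacle here; the only mildly delicate point is the transitional regime $|W| \approx n(\log n)^{-1/2}$, where one needs precisely the polylogarithmic gap between the $(\log n)^{-1/4}$ occurring in the second alternative of \cref{prop: neigh}\ref{neighitem1} and the $(\log n)^{-1/2}$ that bounds $|W|$ in this case, so that the crude bound $\eps n(\log n)^{-1/4}/2$ still clears $2|W|$ before \cref{prop: neigh}\ref{neighitem3} takes over for larger $W$.
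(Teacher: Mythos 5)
Your proposal is correct and is essentially the paper's own proof: the same case split at $|W| = n(\log n)^{-1/2}$, with \cref{prop: neigh}\ref{neighitem1} handling small $W$ and \cref{prop: neigh}\ref{neighitem3} handling the rest. You merely spell out the numerical verifications that the paper leaves implicit.
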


\begin{proof}
If $|W|\leq n(\log n)^{-1/2}$, then, as $G'$ satisfies the assertion of \cref{prop: neigh}\ref{neighitem1}, we have
\[|N_{G'}(W)|\geq \min\bigg\{\frac{1}{2}\eps |W| np, ~\frac{1}{2}\eps n (\log n)^{-1/4}\bigg\} > 2|W|.\]
If we have $n(\log n)^{-1/2} \le |W| < \eps n/6$, then, because $G'$ satisfies the assertion of \cref{prop: neigh}\ref{neighitem3}, we have
\[|N_{G'}(W)|\geq\eps n/2>2|W|.\qedhere\]
\end{proof}

Recall that $|U|\leq k-1 \leq \ell$.
As $C_k$ and $C_{[k-1]}$ are disconnected in $G'-U$, we have $|N_{G'}(C_{[k-1]})| \leq |C_{[k-1]}|+|U| \leq 2 \ell$.
However, by \cref{claim: ChvMatch2,claim: ChvMatch3}, we have $|N_{G'}(C_{[k-1]})|> 2\ell$, a contradiction.
This concludes Case 1. 

\vspace{8 pt}

\noindent \textbf{Case 2:} $|U|> \eps n/10$.

Let $S\coloneqq \{i \in [k]: |C_i| < 2\sqrt{n}\}$ and $t \coloneqq|U|$.
We first claim that 
\begin{equation}\label{equa:C_Ssize}
    t  - \sqrt{n} \le |C_S|.
\end{equation}
Indeed, suppose otherwise. 
As $k>t$ and each component of $G'-U$ contains at least one vertex, we have 
\[|[k] \setminus S|>t-|S|\geq t - |C_S| > \sqrt{n}.\]
Hence,  $|C_{[k]\setminus S}| \geq 2\sqrt{n} \cdot |[k] \setminus S|>2n$, a contradiction. 
Thus $t  - \sqrt{n} \le |C_S|$.

As $G$ satisfies the assertion of \cref{lem: edges}, by the definition of $S$ we have
\begin{align}\label{equa:edgesincomponents}
    e_{G'}(C_S) &\leq \sum_{i\in S} e_{G}(C_i) 
    \leq  \sum_{i\in S} \left( |C_i|^2 p + c |C_i| \sqrt{np}\right) 
    \leq\left( \sum_{i\in S} |C_i| \right) ( 2\sqrt{n}p + c\sqrt{np} ) \nonumber\\
    & \leq |C_S| \cdot  4 c \sqrt{np} \leq 4 c n^{3/2} p^{1/2} \leq \eta n^2p.
\end{align}
We also claim that 
\begin{equation}\label{eq: CS not contian}
\begin{minipage}[c]{0.9\textwidth}
$C_S$ does not contain any set $C'$ with $|C'|\geq \eps n/20$ and $\delta_{G'}(C') \geq tp+ \eps np/2$. 
\end{minipage}
\end{equation}
Indeed, suppose $C_S$ contains such a set $C'$.
By \eqref{equa:edgesincomponents} we have that
\COMMENT{We have
\begin{align*}
    e_{G'}(C',U) &\geq|C'|\delta_{G'}(C')-2e_{G'}(C_S)\geq|C'|(|U|p+\eps np/2)-2\eta n^2p \\
    &\geq \eps n|U|p/20 + \eps^2 n^2 p/40-2\eta n^2p > \eps n|U|p/20 + \eps^2 n^2 p/50.
\end{align*}
Note that the first inequality holds since, by assumption, there can be no edges to vertices in connected components other than those indexed by $S$.}
\[e_{G'}(C',U)\geq|C'|\delta_{G'}(C')-2e_{G'}(C_S) > |C'| t p+ \eps^2 n^2 p/50.\] 
On the other hand, as $G$ satisfies the assertion of \cref{lem: edges}, we have
\[e_{G'}(C',U)\leq e_{G}(C',U) \leq |C'|tp + c\sqrt{|C'|tnp}\leq|C'|tp +\eps^2 n^2 p/100,\]
a contradiction. Hence, such a set $C'$ does not exist.

Suppose that $H$ satisfies  \eqref{equa:Posadegree} for all $i\in [t]\setminus[t-\eps n/10]$.
As $G$ satisfies the assertion of \cref{lem: mindeg} and by \eqref{equa:C_Ssize}, the set $C'' \coloneqq C_S \setminus \{v_1,\dots, v_{t-\eps n/10} \}$ satisfies $|C''| \geq |C_S| -t + \eps n/10 \geq \eps n/20$ and 
\[\delta_{G'}(C'') \geq (1-\eta)np - (n-t - 9\eps n/10)p \geq
tp + \eps np/2,\]
a contradiction to \eqref{eq: CS not contian}.

Hence, there exists $j\in [t]\setminus[t-\eps n/10]$ such that $H$ does not satisfy \eqref{equa:Posadegree} for $j$.
By \eqref{equa:Chvataldegree} and \cref{lem: mindeg}, this means that 
\[\delta_G(V)-d_H(v_{n-t -9\eps n/10-1})\geq (1-\eta)np - (j -\eps n)p \geq (n- t+4\eps n/5)p.\]
Therefore, the set $R \coloneqq V\setminus (U\cup \{v_1,\dots,v_{n-t-9\eps n/10}\})$ satisfies $|R|\geq 9\eps n/10$ and $\delta_{G'}(R) >  (n- t+4\eps n/5)p.$ As $t\leq n/2$, we have $\delta_{G'}(R) \geq tp + \eps np/2$.
Hence, we conclude $|R\cap C_{S}|  < \eps n/20$, otherwise we have a contradiction to \eqref{eq: CS not contian}.

Hence, $R'\coloneqq R\cap C_{[k]\setminus S}$ satisfies $|R'|\geq 4\eps n/5$.
As $G'$ satisfies the assertion of \cref{prop: neigh}\ref{neighitem3}, we conclude that 
\[|N_{G'}(R')|\geq (1-\eps^2/10) ( n-t + 4\eps n/5) \geq n-t + \eps n/2\stackrel{\mathclap{\eqref{equa:C_Ssize}}}{>}|V\setminus C_S|.\]
This is a contradiction as $R'$ lies inside $C_{[k]\setminus S}$, which is disconnected from $C_S$ in $G'-U$.
\end{proof}

We now show that \cref{teor:ChvMatch} is best possible in the sense that $(\eps,\eps)$-Chv\'atal-resilience cannot be improved to allow for $(\eps,(3np)^{-1})$-Chv\'atal-resilience.
That is, unlike the classical theorem of Chv\'atal, the random graphs analogue requires an extra shift in the indices whenever we veer from a P\'osa degree sequence. 

Given an $n$-vertex graph $G$, we say that $G$ contains an \emph{optimal matching} if it has a matching of size $\lfloor n/2 \rfloor$. 
In particular, if $G$ does not contain an optimal matching, then $G$ cannot be Hamiltonian. 
Note that \cref{teor:Counter} implies \cref{teor:CounterIntro}.

\begin{theorem}\label{teor:Counter}
For every $0< \eps < 10^{-6}$ there exists $C>0$ such that, for any $C\log n/n \leq p\leq 1/25$, \COMMENT{This bound on $p$ comes from the following observations.
On the one hand, we want $|Y|\geq2$ in order to show that the example does not satisfy Chv\'atal in the classical, non-shifted sense.
Imposing this, we conclude that $p\leq1/(4(1+\eta))$.
On the other hand, for the sake of simplicity in the statement, we want to have that $\lfloor1/((1+\eta)2p)\rfloor\geq3+\lceil1/(3p)\rceil$ for all $p$ in our range (see the end of the proof).
In particular, this fails around $p=1/(12(1+\eta))$, but works for all $p\leq1/20$ (for small values of $\eta$). 
We have not tried to optimise this constant, nor the upper bound on $\eps$.} the random graph $G=G_{n,p}$ is a.a.s.~not $(\eps,\lceil(3p)^{-1}\rceil/n)$-Chv\'atal-resilient with respect to containing an optimal matching.
\end{theorem}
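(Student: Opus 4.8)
The plan is to construct, inside a typical $G=G_{n,p}$, a subgraph $H$ which witnesses the failure of $(\eps,\lceil(3p)^{-1}\rceil/n)$-Chv\'atal-resilience: that is, $H\in\mathcal{H}^{\eps,\delta}_{n,p}(G)$ with $\delta=\lceil(3p)^{-1}\rceil/n$, yet $G'\coloneqq G\setminus H$ contains no matching of size $\lfloor n/2\rfloor$. The obstruction to an optimal matching will come from Tutte–Berge: I want a small set $U$ such that $G'-U$ has many components, most of them odd. The cleanest way is to isolate a linear-sized independent set. Concretely, fix disjoint sets $A,B\subseteq V$ with $|A|$ slightly larger than $|B|$ (say $|A|=n/2+\Theta(\eps n)$ and $|B|=n/2-\Theta(\eps n)$ up to parity corrections), and let $H$ be the graph obtained by deleting from $G$ all edges inside $A$ together with some carefully chosen edges between $A$ and $B$. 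Then in $G'$ the set $A$ is independent, so with $U=B$ the graph $G'-U$ consists of $|A|$ isolated vertices; since $|A|>|B|=|U|$, at least $|A|-|B|=\Theta(\eps n)\ge 1$ of these are unmatched, so $G'$ has no optimal matching (this needs $|A|>\lfloor n/2\rfloor$, which holds by the choice of sizes and parity).

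The heart of the argument is then to verify that this $H$ really lies in $\mathcal{H}^{\eps,\delta}_{n,p}$. First I would condition on the standard quasirandom events for $G=G_{n,p}$: by \cref{lem: mindeg,lem: edges} (and \cref{lem: mindegGnm} applied to the bipartite graph $G_{A,B,p}$), a.a.s.\ every vertex of $A$ has $(1\pm\eta)|A|p$ neighbours in $A$ and $(1\pm\eta)|B|p$ in $B$, and similarly for $B$. I delete $E_G(A)$ outright, and from $E_G(A,B)$ I delete, for each vertex, enough edges to bring degrees down to the target thresholds — one must check the two sides are compatible, i.e.\ that the deletions requested from the $A$-side and the $B$-side can be realised simultaneously; this is where counting $e_G(A,B)$ via \cref{lem: edges} and a simple averaging/greedy argument is used. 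After the deletion, the degree sequence of $H$ is: vertices in $A$ have degree roughly $|A|p$ (all their edges inside $A$ are gone, some to $B$ remain), and vertices in $B$ have degree roughly $|B|p$ plus their residual edges to $A$. Ordering degrees decreasingly, the large-degree vertices $v_1,\dots$ are essentially the $A$-vertices with $d_H\approx|A|p\approx(n/2)p$, and the small-degree ones $v_{n-i-\delta n},\dots$ are $B$-vertices. One then checks that for every $i<n/2$ the shifted Chv\'atal alternative \eqref{equa:Chvataldegree}, $d_H(v_{n-i-\delta n})\le ip-\eps np$, holds — using that the $(n-i-\delta n)$-th largest degree falls into the $B$-block once $i$ is not too small, and that the extra shift $\delta n\approx(3p)^{-1}$ is \emph{not quite large enough} to compensate, so the inequality fails to become too strong; meanwhile \eqref{equa:Posadegree} can be arranged to hold for the small indices $i<\eps n$. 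Calibrating $|A|,|B|$ against $\eps$, $\eta$ and the shift is the delicate bookkeeping step.

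The main obstacle I anticipate is precisely this calibration: I need $|A|>\lfloor n/2\rfloor$ (so the example genuinely defeats optimal matchings) while simultaneously keeping $H\in\mathcal{H}^{\eps,\delta}_{n,p}$ with the shift only $\delta=\lceil(3p)^{-1}\rceil/n$ rather than $\eps n$. Since $(3p)^{-1}$ is of order $1/p$, which is $o(n)$ but not $o(\eps n)$ in the sparse regime, the shift in indices translates into a degree change of only about $(\delta n)p\approx 1/3$ per step near the middle of the degree sequence — negligible compared to $\eps np$ — so the construction that works for the classical non-shifted Chv\'atal condition continues to work here, and this is exactly why a shift of order $1/p$ is insufficient. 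The upper bound $p\le 1/25$ enters to guarantee $\delta n=\lceil(3p)^{-1}\rceil$ is at least some constant (so the "$+\delta n$" shift is a genuine index shift of size $\ge 2$, making the example also fail classical Chv\'atal, as the footnote indicates) and to keep the relevant thresholds of the form $ip-\eps np$ nonnegative and the whole construction consistent; the lower bound $p\ge C\log n/n$ is what makes all the Chernoff-based concentration statements hold a.a.s. I would finish by assembling these pieces: conditioning on the a.a.s.\ events, exhibiting $H$, certifying membership in $\mathcal{H}^{\eps,\delta}_{n,p}$ via the degree computation, and invoking Tutte–Berge to conclude $G'$ has no matching of size $\lfloor n/2\rfloor$.
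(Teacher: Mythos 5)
Your high-level plan (build $H$, obstruct the optimal matching via a Hall/Tutte-type deficiency, then certify $H\in\mathcal{H}^{\eps,\delta}_{n,p}$) matches the paper's, but your actual construction cannot be made to work. If $A$ is independent in $G'$ with $|A|>\lfloor n/2\rfloor$, then $H=G\setminus G'$ contains all of $G[A]$, so every $a\in A$ has $d_H(a)\ge e_G(a,A)\ge(1-\eta)|A|p\ge(1/2-\eta)np$. Thus more than $n/2$ vertices have $H$-degree at least $(1/2-\eta)np$, and for $i$ equal to the largest index below $n/2$ both alternatives of \cref{chvatal} fail: \eqref{equa:Posadegree} would need $d_H(v_i)\le(n-i)p-\eps np\approx(1/2-\eps)np$, while for \eqref{equa:Chvataldegree} the index $n-i-\delta n\le n/2$ (since $\delta n=\lceil(3p)^{-1}\rceil\ge1$) still points at one of these high-degree vertices, so it would need $(1/2-\eta)np\le ip-\eps np<(1/2-\eps)np$. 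This holds no matter how you calibrate $|A|$, $|B|$ or which $A$--$B$ edges you delete, so the ``delicate bookkeeping step'' you flag is not merely delicate but impossible; the deficiency of the obstruction must be $O(1)$, not $\Theta(\eps n)$.

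The paper's construction instead takes an independent set $X$ of size $100\eps n$ whose $G'$-neighbourhood lies in a set $U$ of size $|X|-2$ (so two vertices of $X$ are always unmatched), together with a second ingredient absent from your proposal: a set $Y$ of size $\lfloor(2(1+\eta)p)^{-1}\rfloor$ chosen so that its vertices have $H$-degree $0$. In the decreasing ordering, the $|X|$ vertices of degree $\approx(1-100\eps)np$ come first and the $|U|+|Y|\approx100\eps n+1/(2p)$ vertices of degree $0$ come last; \eqref{equa:Posadegree} fails only for $i$ slightly below $100\eps n$, and precisely there \eqref{equa:Chvataldegree} is rescued because $i+\lceil(3p)^{-1}\rceil<|U|+|Y|$, i.e.\ because $1/(2p)$ exceeds $1/(3p)$ by more than an additive constant. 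This comparison of $1/(2p)$ with $1/(3p)$ — not your ``degree change of about $1/3$ per index'' heuristic — is the exact reason a shift of order $1/p$ is too small, and it is also where the hypothesis $p\le1/25$ is used (to guarantee $\lfloor(2(1+\eta)p)^{-1}\rfloor\ge\lceil(3p)^{-1}\rceil+3$), rather than to make $\delta n$ a large constant as you suggest.
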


The proof strategy is as follows. 
We consider $G_{n,p}$ and remove appropriate edges to create a graph $G'$ having an independent set $X$ with $|N_{G'}(X)|<|X|-1$. 
This ensures that $G'$ does not contain an optimal matching.
We conclude the proof by showing that $G\setminus G' \in \mathcal{H}^{\eps, \lceil(3p)^{-1}\rceil/n}_{n,p}$.

\begin{proof}
Let $1/n \ll 1/C \ll \eta \ll \eps < 10^{-6}$.
Let $Y\subseteq V$ be any set of vertices of size $\lfloor ((1+\eta)2p)^{-1}\rfloor$.
Now expose all edges of $G$ incident to $Y$. 
Let $\mathcal{E}_1$ be the event that, for each vertex $y\in Y$, we have 
\begin{equation}\label{equa:ChvCounterMinDeg}
    d_G(y)=(1\pm\eta)np.
\end{equation}
Note that \cref{lem: mindeg} implies that $\mathcal{E}_1$ happens a.a.s. We condition on the event $\mathcal{E}_1$.
Thus we have
\[|N_G(Y)|\leq\sum_{y\in Y}d_G(y)\leq|Y|(1+\eta)np\leq n/2.\]

Fix disjoint sets $X,U\subseteq V\setminus(Y\cup N_G(Y))$ with $|X|=100\eps n$ and $|U|=|X|-2$.
Now, expose all remaining edges of $G$ (i.e.~those not incident to $Y$).
Let $\mathcal{E}_2$ be the event that the following hold for all $v\in V\setminus Y$ and $Z\in \{X,U\}$:
\begin{align}
e_{G}(v, Z) &= (1\pm \eta)|Z|p,\label{equa:ChvCounter}\\
d_{G}(v) &= (1\pm \eta)np.\label{equa:ChvCounter1111}
\end{align}
By \cref{lem: mindeg,lem: mindegGnm}, the event $\mathcal{E}_2$ happens a.a.s.~under conditioning on $\mathcal{E}_1$\COMMENT{Here, for \eqref{equa:ChvCounter1111} we are implicitly using the fact that the conditioning on $\mathcal{E}_1$ only affects $|Y|=o(n)$ possible edges incident to each $v$.}. 
We condition on the event that both $\mathcal{E}_1$ and $\mathcal{E}_2$ hold, i.e.~that $G$ satisfies \eqref{equa:ChvCounterMinDeg}--\eqref{equa:ChvCounter1111}.
We will show that every such $G$ is not $(\eps,\lceil(3p)^{-1}\rceil/n)$-Chv\'atal-resilient with respect to containing an optimal matching.

We construct a spanning subgraph $G'$ of $G$ by deleting all edges in $G[X]$ and all edges in $G[X,V\setminus(X\cup U)]$.
From the construction, $X$ is an independent set of $G'$ and $N_{G'}(X)\subseteq U$.
Thus, $|N_{G'}(X)| \leq |U| <|X|-1$, hence $G'$ does not contain an optimal matching. 

Let $\gamma\coloneqq\lceil(3p)^{-1}\rceil/n$.
Now it suffices to show that $H\coloneqq G\setminus G' \in \mathcal{H}_{n,p}^{\eps,\gamma}$. 
From the construction, it is easy to see that \eqref{equa:ChvCounter} and \eqref{equa:ChvCounter1111} imply that, for all $u\in Y\cup U$, $x\in X$ and $v\in V\setminus(X\cup U\cup Y)$,
\begin{align}\label{eq: H degree 1}
d_{H}(u) =0, \quad d_{H}(x) = (1-100\varepsilon\pm3\eta)np \quad \text{ and } \quad d_{H}(v) = (1\pm\eta)100\eps np.
\end{align}
Let $v_1,\dots, v_n$ be an ordering of $V$ with $d_H(v_1)\geq \ldots \geq d_H(v_n)$.
Observe that, in this ordering, $X=\{v_1,\ldots,v_{|X|}\}$ and $U\cup Y=\{v_{n-|U\cup Y|+1},\ldots,v_n\}$.

We now show that $H\in\mathcal{H}^{\eps,\gamma}_{n,p}$.
As \eqref{eq: H degree 1} implies $\Delta(H)\leq (1-99\eps)np$, $H$ satisfies \eqref{equa:Posadegree} for all $i\in [98\eps n]$. 
Note that for each $i \in [100\eps n]\setminus [98\eps n]$, \eqref{eq: H degree 1} implies that\COMMENT{We need $|Y|\geq(3p)^{-1}+3$.} $d_H(v_{n-i-\gamma n})=0 \leq (i-\eps n)p$. 
Thus $H$ satisfies \eqref{equa:Chvataldegree} for all $i\in [100\eps n]\setminus [98\eps n]$. 
Finally, for $i\in[n/2-1]\setminus[100\eps n]$, \eqref{eq: H degree 1} implies that
$d_H(v_i) \leq 101 \eps np\leq (n-i)p - \eps np$, where the final inequality holds with room to spare. 
Thus $H$ satisfies \eqref{equa:Posadegree} for all $i\in [n/2-1]\setminus [100 \eps n]$. 
Hence, $H\in \mathcal{H}_{n,p}^{\eps,\gamma}$.
Therefore, $G_{n,p}$ a.a.s.~contains a subgraph $H\in \mathcal{H}_{n,p}^{\eps,\gamma}$ such that $G_{n,p}\setminus H$ does not contain an optimal matching.
\end{proof}


\section{P\'osa's theorem for Hamilton cycles in random graphs}

Our approach for the proof of \cref{teor:posa} builds on the ideas of \citet{LS12},
with some modifications and additional steps to account for the increased
flexibility in the choice of the graph $H$ that we remove.
Thus we only describe the necessary tools as well as the main steps.
The corresponding proofs that we omit here can be found in the appendix.
For $H\in\mathcal{H}^{\eps}_{n,p}$, we rely heavily on the fact that graphs of the form $G_{n,p}\setminus H$ have good expansion properties; namely, they satisfy \cref{prop: neigh}.

Whenever we consider a path $P$ on a vertex set $W$ we mean that $V(P)\subseteq W$.
Let $G$ be a graph and let $P=v_1\ldots v_\ell$ be a path on $V(G)$.
Let $v\coloneqq v_1$ and $u\coloneqq v_\ell$ be the endpoints of $P$.
Suppose $v_i \in N_G(v)$ for some $i \ne \ell$.
Then, we can also consider the path $P'=v_{i-1}v_{i-2}\ldots vv_iv_{i+1}\ldots u$ in $G\cup P$.
We refer to the path $P'$ as a \emph{rotation} of $P$ within $G$ with \emph{fixed endpoint} $u$ and \emph{pivot} $v_i$.
We call $v_{i-1}v_i$ the \emph{broken edge} of the rotation.

Starting from $P$, we will consider successive rotations of $P$ to obtain new paths, always leaving one of the endpoints of $P$ fixed.
We only consider rotations whose broken edges are edges in the original path $P$.

For any vertex $x \in V(P)$, let $x_{P,u}^{-}$ and $x_{P,u}^{+}$ denote the predecessor and successor of $x$ along $P$, respectively (where $P$ is oriented towards the fixed endpoint $u$).
Similarly, given any set $X\subseteq V(P)$, we denote $X_{P,u}^+\coloneqq\{x_{P,u}^+:x\in X\}$ and $X_{P,u}^-\coloneqq\{x_{P,u}^-:x\in X\}$.

Let $\mathcal{R}_{G,P,u}\subseteq V(P)$ be the set of all vertices $x\in V(P)$ such that there exists a path $P_x$ in $G\cup P$ with endpoints $u$ and $x$ which can be obtained by taking successive rotations of $P$ within $G$ with fixed endpoint $u$.
(As mentioned before, we only consider rotations whose broken edges are in $P$.)
Whenever we consider a vertex $x\in\mathcal{R}_{G,P,u}$, the notation $P_x$ will be used to denote a path with endpoints $x$ and $u$ which can be obtained by the minimum number of rotations of $P$ (whenever there is more than one choice for $P_x$, we fix such a choice arbitrarily among all the possibilities).\COMMENT{I think this is important and overlooked in \cite{LS12}. If $P_x$ is instead allowed to vary among the different paths (that is, we consider different $P_x$ for the same $x$) that fit the definition, then we do different damage to different path segments in \cref{claim: rotations3}. So I think it is best to make it clear here that we fix a $P_x$ and never consider other possible sequences of rotations.}
Let $R_{G,P,u}^0\coloneqq\{v\}$ and $R_{G,P,u}^t$ be the set of vertices $x\in\mathcal{R}_{G,P,u}$ such that $P_x$ is obtained by at most $t$ rotations.

Given any set $A\subseteq\mathcal{R}_{G,P,u}$, we denote by $R_{G,P,u}(A)$ the union of $A$ and the set of endpoints of all paths which are obtained via a single rotation of $P_a$ with $u$ as a fixed endpoint, for any $a\in A$.

The following observation is well-known.
We include the short proof in the appendix.

\begin{lemma}\label{claim: rotations1}
Let $G$ be a graph.
Let $P'$ be a path on $V(G)$ and let $P=v_1\ldots v_\ell$ be a longest path in $G\cup P'$.
Then, for all $t\geq 0$ we have 
\[|R_{G,P,v_\ell}^{t+1}| \ge \frac{1}{2}(|N_{G}(R_{G,P,v_\ell}^t)|-3|R_{G,P,v_\ell}^t|).\]
\end{lemma}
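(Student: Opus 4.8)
The plan is to run the classical Pósa rotation argument. Write $u\coloneqq v_\ell$, $A\coloneqq R_{G,P,u}^t$ and $B\coloneqq R_{G,P,u}(A)$. Every path witnessing membership in $B$ is obtained from some $P_a$ with $a\in A$ by one further rotation, and $P_a$ uses at most $t$ rotations, so $B\subseteq R_{G,P,u}^{t+1}$; it therefore suffices to prove $|B|\ge\tfrac12\bigl(|N_G(A)|-3|A|\bigr)$. Two facts will be used repeatedly: a rotation never changes the vertex set, so every $P_x$ with $x\in\mathcal R_{G,P,u}$ spans $V(G)$ and, having the same length as $P$, is again a longest path in $G\cup P'$; and $v_1\in R_{G,P,u}^0\subseteq A$, while $u\notin A$.

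Set $\mathrm{Bad}\coloneqq A\cup A_{P,u}^+\cup A_{P,u}^-\cup\{u\}$. Since $v_1\in A$ has no predecessor along $P$, we have $|A_{P,u}^-|\le|A|-1$, whence $|\mathrm{Bad}|\le 3|A|$ and $|N_G(A)\setminus\mathrm{Bad}|\ge|N_G(A)|-3|A|$. Fix $w\in N_G(A)\setminus\mathrm{Bad}$ and a vertex $x=x(w)\in A$ with $wx\in E(G)$. Since $w\notin\{v_1,u\}$, $w$ is an internal vertex of $P_x$; let $w'\coloneqq w_{P_x,u}^-$ be its predecessor along $P_x$ (oriented towards $u$). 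Rotating $P_x$ with free endpoint $x$ and pivot $w$ has broken edge $ww'$ and new free endpoint $w'$. The crux is the claim that $ww'\in E(P)$. Granting it, the rotation is legal, so $w'\in B$, and since $ww'$ is an edge of $P$ we get $w'\in\{w_{P,u}^+,w_{P,u}^-\}$.

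To prove $ww'\in E(P)$, suppose not. Then $ww'$ is one of the edges added during a fixed minimum-length rotation sequence $P=Q_0\to Q_1\to\dots\to Q_k=P_x$ producing $P_x$, say created at step $Q_{j-1}\to Q_j$, where $k\le t$ because $x\in R_{G,P,u}^t$; and, not lying in $E(P)$, this edge is never broken afterwards. That step rotates $Q_{j-1}$ by joining its free endpoint to the pivot via $ww'$, so $\{w,w'\}$ is exactly this free-endpoint/pivot pair. If $w$ is the pivot, the new free endpoint of $Q_j$ is joined to $w$ by the (legally) broken $P$-edge, hence is a $P$-neighbour of $w$ reachable by $j\le t$ rotations, so it lies in $R_{G,P,u}^{t}=A$, forcing $w\in A_{P,u}^+\cup A_{P,u}^-\subseteq\mathrm{Bad}$; if instead $w$ is the free endpoint of $Q_{j-1}$, then $w$ itself is reachable by $j-1\le t$ rotations, so $w\in A$. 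Either way we contradict $w\notin\mathrm{Bad}$, proving the claim. (This is the point at which one must use that each $P_z$, together with a defining rotation sequence, is fixed once and for all, as in the remark following the definition of $R_{G,P,u}$.) Finally, $w\mapsto w'$ defines a map $N_G(A)\setminus\mathrm{Bad}\to B$; since $w'=z$ forces $w\in\{z_{P,u}^+,z_{P,u}^-\}$, this map is at most $2$-to-$1$, so $|B|\ge\tfrac12|N_G(A)\setminus\mathrm{Bad}|\ge\tfrac12\bigl(|N_G(A)|-3|A|\bigr)$, as required.

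The main obstacle is the bookkeeping in the previous paragraph: one has to argue that an edge of $P_x$ incident to $w$ and pointing back towards $x$ cannot be a rotation-added edge without placing a $P$-neighbour of $w$ (or $w$ itself) into $A$. This is delicate because a rotation reverses the free-end segment of the current path and can flip the orientation of earlier added edges, so the argument must proceed through the step at which the offending edge was created rather than through its position in $P_x$, and it genuinely relies on the canonical paths $P_z$ (and their defining sequences) being fixed.
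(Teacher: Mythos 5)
Your proof is correct and follows essentially the same route as the paper's: exclude the at most $3|R^t|$ vertices of $R^t\cup (R^t)^+_{P,v_\ell}\cup(R^t)^-_{P,v_\ell}$, observe that rotating at any remaining neighbour $w$ of $R^t$ breaks a genuine edge of $P$ and hence yields a $P$-neighbour of $w$ as a new endpoint, and lose a factor of $2$ because each new endpoint has at most two $P$-neighbours. The only (cosmetic) difference is that you certify the broken edge lies in $E(P)$ by tracking the step at which a non-$P$-edge could have been created, whereas the paper notes directly that the segment $w^-ww^+$ survives all $t$ rotations because none of these three vertices ever becomes an endpoint.
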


Next, we restrict ourselves to the random graph $G_{n,p}.$ 
Given a `large' set $A$ of endpoints obtainable via a `small' number of successive rotations of a longest path $P$, we prove a lower bound on the number of endpoints obtainable from $A$ via one further rotation.

\begin{lemma}\label{claim: rotations3}
Let $0<1/C\ll\eta \ll \eps < 1$.
For $p \geq C\log{n}/n$, the random graph $G=G_{n,p}$ a.a.s.~satisfies the following.
Let $G'$ be a subgraph of $G$ and $P'$ be a path on $V$.
Let $P=v_1\ldots v_\ell$ be a longest path in $G'\cup P'$. 
Then, for all $A\subseteq R_{G',P,v_\ell}^{\eta \log n}$ with $|A|\geq \eps n/100$, we have that $|R_{G',P,v_\ell}(A)|\geq p^{-1}\delta_{G'}(A) - \eps n/10$.
\end{lemma}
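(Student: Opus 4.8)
The plan is to combine the combinatorial rotation bound of \cref{claim: rotations1} with the expansion property of \cref{prop: neigh}\ref{neighitem3} (more precisely, with the edge-count estimate \eqref{equa:XYedges1.62}, which is what underlies that part of the proposition), being careful about the fact that the set $A$ can only use broken edges lying in the original path $P$. The key point is that \cref{claim: rotations1} tells us, after one rotation from each $a\in A$, the endpoints we reach include roughly half of $N_{G'}(A)$ minus a lower-order term accounting for the at most $3|A|$ "forbidden" predecessors/successors along $P$; but the factor $1/2$ is what we must avoid, since we want essentially all of $p^{-1}\delta_{G'}(A)$. So the first step is to recall why the $1/2$ can be removed in the random setting: when $|A|$ is large ($\ge \eps n/100$) and we have already performed $\le \eta\log n$ rotations, the set $A$, its image under the path-successor map, and $R_{G',P,v_\ell}(A)$ are all sets of linear size, and the quasirandomness of $G$ forces $A$ to send almost all of its $G'$-neighbourhood into $R_{G',P,v_\ell}(A)$ directly, rather than only half of it.

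Concretely, I would argue as follows. Write $A'\coloneqq A$ and let $B\coloneqq R_{G',P,v_\ell}(A)$. Every vertex $x\in B$ is, by definition, either in $A$ or the endpoint of a path obtained by rotating some $P_a$, $a\in A$, once; such a rotation has pivot a $G'$-neighbour $w$ of $a$ with $w\in V(P)$, and produces endpoint $w^-_{P,a}$ or the analogous successor along the (possibly reoriented) path $P_a$. The crucial observation, exactly as in \cite{LS12}, is that because the broken edges are required to lie in the \emph{original} path $P$ and we have taken the minimum number of rotations, for each $a\in A$ all but at most a bounded number of its $G'$-neighbours $w$ give rise, via one rotation, to an endpoint lying in $B$; hence $e_{G'}(A, V\setminus B)$ counts, for each $a$, only a bounded set of "bad" pivots plus the $\le\eta\log n$ edges of $P_a$ that are not edges of $P$ (these are what make rotations of $P_a$ potentially not rotations "of $P$"). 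Summing, $e_{G'}(A,V\setminus B)\le C'|A|\eta\log n = o(n^2 p)$ since $\eta\log n = o(np)$. On the other hand, $e'_{G'}(A,V)\ge |A|\delta_{G'}(A)$, so $e'_{G'}(A,B)\ge |A|\delta_{G'}(A) - o(n^2 p)$. Now if $|B| < p^{-1}\delta_{G'}(A) - \eps n/10$, pick $Y\supseteq B$ with $|Y| = p^{-1}\delta_{G'}(A) - \eps n/10$; since $\delta_{G'}(A)\ge(\eps-\eta)np$ this has size $\ge\eta n$, and since $|A|\ge\eps n/100 \ge n(\log n)^{-1/2}$ we may apply \eqref{equa:XYedges1.62} to get $e'_{G'}(A,Y)\le e'_G(A,Y)\le (1+\eta)|A||Y|p \le (1+\eta)|A|(\delta_{G'}(A) - \eps np/10) < |A|\delta_{G'}(A) - |A|\eps np/20$, and this contradicts the lower bound on $e'_{G'}(A,B)\le e'_{G'}(A,Y)$ once $o(n^2p) < |A|\eps np/20$, which holds as $|A|\ge\eps n/100$ and $\eta$ is small. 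Hence $|R_{G',P,v_\ell}(A)|=|B|\ge p^{-1}\delta_{G'}(A) - \eps n/10$.

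For this to run cleanly I should first condition on the a.a.s.\ event that $G=G_{n,p}$ satisfies \cref{lem: edges} and \cref{lem: mindeg} with parameter $\eta$ (so that \eqref{equa:XYedges1.62} is available for all relevant $X,Y$), and record at the outset that any longest path $P$ in $G'\cup P'$ has its structure fixed, so $P$-predecessors and $P$-successors are well defined. I also need the elementary fact (already implicit in the rotation setup preceding the statement) that, for a longest path, every neighbour in $G'\cup P$ of an endpoint of $P_a$ lies on $P_a$ — this is what bounds the number of "bad" pivots per vertex by an absolute constant.

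The main obstacle, and the step deserving the most care, is the bookkeeping that shows $e_{G'}(A, V\setminus R_{G',P,v_\ell}(A)) = o(n^2p)$: one must track, for each $a\in A$, precisely which $G'$-neighbours of $a$ fail to yield a one-rotation endpoint inside $R_{G',P,v_\ell}(A)$, distinguishing the absolute-constant number of structural exceptions (neighbours that are themselves endpoints, or successors of already-used pivots) from the at most $\eta\log n$ "non-original" edges of $P_a$, and to confirm that the total contribution is genuinely $o(n^2p)$ rather than $O(np)$ per vertex — here the hypothesis $A\subseteq R_{G',P,v_\ell}^{\eta\log n}$ and $np\ge C\log n$ is exactly what is needed, since $\eta\log n = o(np)$. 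Everything else is a short application of the quasirandom edge count together with the trivial lower bound $e'_{G'}(A,V)\ge|A|\delta_{G'}(A)$.
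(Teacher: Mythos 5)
There is a genuine gap at the heart of your plan: the inequality $e_{G'}(A,V\setminus B)\le C'|A|\eta\log n$, where $B=R_{G',P,v_\ell}(A)$, does not follow from your "crucial observation" and is false in general. For $a\in A$, the $G'$-neighbours $w$ of $a$ are the \emph{pivots} of the rotations of $P_a$; the vertex that lands in $B$ after one rotation is not $w$ but the $P_a$-neighbour of $w$ on the appropriate side (the $P$-predecessor of $w$ if the relevant segment of $P_a$ is traversed in the original direction, the $P$-successor if it is reversed). So what you actually get is that $N_{G'}(a)$ is contained, up to $O(\eta\log n)$ exceptions, in the image of $B$ under a shift map \emph{that depends on $a$} (and varies along $P_a$), not in $B$ itself. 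Consequently you cannot fix a single set $Y\supseteq B$ of size $p^{-1}\delta_{G'}(A)-\eps n/10$ and compare $e'_{G'}(A,Y)$ with $e'_G(A,Y)$ via \eqref{equa:XYedges1.62}: the set you would need is $\bigcup_{a}\sigma_a(B)$, which is not controlled. Applying the edge-count estimate vertex-by-vertex (pairing each $a$ with its own $\sigma_a(B)$) is hopeless at density $p=\Theta(\log n/n)$, since the per-vertex error $c\sqrt{|Y|np}$ swamps the main term $|Y|p$; and the crude global repair $Y\supseteq B^{+_P}\cup B^{-_P}$ costs a factor of $2$, which is precisely the loss (already present in \cref{claim: rotations1}) that this lemma exists to remove.

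The fix is not just "careful bookkeeping" of bad pivots, as your last paragraph suggests, but a structurally different count. The paper partitions $P$ into $k=\eta^{1/2}\log n$ intervals $P_1,\dots,P_k$ and, for each $i$, splits $A$ into $\hat X_i$ (those $a$ for which $P_i$ contains a broken edge of $P_a$) and $X_{i,+},X_{i,-}$ (those for which $P_i$ is unbroken and traversed forwards, resp.\ backwards). On an unbroken interval the shift map is the \emph{same} for all $a$ in $X_{i,+}$ (the $P$-successor) and for all $a$ in $X_{i,-}$ (the $P$-predecessor), so one can apply \cref{lem: edges} to the pairs $\bigl(X_{i,\pm},(P_i\setminus R(A))^{\pm}\bigr)$ with errors that sum acceptably over the $k$ intervals; the bound on the total number of rotations shows that only $\le\eta^{3/4}\log n$ intervals have $|\hat X_i|\ge\eta^{1/4}|A|$, so the broken intervals contribute an error of order $\eta^{1/4}|A|np$. (The paper also runs the count through $H=G\setminus G'$ rather than through $G'$ directly, but that difference is cosmetic; the interval partition and the $\pm$ classification are the essential missing ingredients.) Your overall skeleton — lower bound $e'_{G'}(A,V)\ge|A|\delta_{G'}(A)$, upper bound via quasirandomness, contradiction — is the right one, but without the interval device the middle step does not go through.
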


The proof of \cref{claim: rotations3} is similar to (part of) the proof of Lemma~3.2 in \cite{LS12}.
For completeness, we include the details in the appendix.

We now combine the two previous results to give a lower bound on the number of endpoints which can be generated via successive rotations of a path $P$ with one fixed endpoint. 

\begin{lemma}\label{lem: re}
Let $0<1/C \ll \eps <1$. 
For $p \geq C\log n/n$, the random graph $G=G_{n,p}$ a.a.s. satisfies the following.
Let $H\in \mathcal{H}_{n,p}^{\eps}(G)$ and $G'\coloneqq G\setminus H$.
Let $P'$ be a path on $V$.
For any longest path $P=v_1\ldots v_\ell$ in $G'\cup P'$, there exists $U\subseteq V$ with $|U|\geq(1/2 + \eps/4)n$ such that, for every $v\in U$, there exists a longest path $Q_v$ in $G'\cup P'$ with endpoints $v_\ell$ and $v$, where $V(Q_v)=V(P)$.
\end{lemma}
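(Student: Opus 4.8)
The plan is to run the standard P\'osa rotation--extension argument, but using the resilient expansion properties from \cref{prop: neigh} in place of the usual minimum-degree bounds, and using \cref{claim: rotations1,claim: rotations3} to control how the set of reachable endpoints grows. First I would set up a hierarchy $0<1/C\ll\eta\ll\eps<1$ and condition on the event that $G=G_{n,p}$ satisfies the conclusions of \cref{claim: rotations3} and \cref{prop: neigh} (applied with $\eps$, and with $\eta$ small enough), which happens a.a.s. Fix $H\in\mathcal{H}^{\eps}_{n,p}(G)$, put $G'\coloneqq G\setminus H$, fix a path $P'$ on $V$, and let $P=v_1\ldots v_\ell$ be a longest path in $G'\cup P'$. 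Write $u\coloneqq v_\ell$ throughout and abbreviate $R^t\coloneqq R^t_{G',P,u}$ and $\mathcal{R}\coloneqq\mathcal{R}_{G',P,u}$.

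The core is to show that $|\mathcal{R}|\ge(1/2+\eps/4)n$; in fact I would show more precisely that the set of endpoints reachable within $\eta\log n$ rotations already has this size. First, a short bootstrapping phase: starting from $R^0=\{v_1\}$, iterate \cref{claim: rotations1}, which gives $|R^{t+1}|\ge\frac12(|N_{G'}(R^t)|-3|R^t|)$. While $|R^t|<n(\log n)^{-1/2}$ we may use \cref{prop: neigh}\ref{neighitem1} to get $|N_{G'}(R^t)|\ge\min\{\eps|R^t|np/2,\ \eps n(\log n)^{-1/4}/2\}$; since $np\ge C\log n$ with $C$ large, the term $\eps|R^t|np/2$ dominates $3|R^t|$ by a wide margin, so $|R^{t+1}|\ge(\eps np/4-2)|R^t|\ge 2|R^t|$, i.e.\ the reachable set at least doubles each step. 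Hence after $O(\log n/\log(np))=O(\log n)$ rotations — certainly at most $\eta\log n$ steps, as $np\ge C\log n$ makes the base of the geometric growth large — we reach a set $A_0\coloneqq R^{t_0}$ with $n(\log n)^{-1/2}\le|A_0|$ and $|A_0|\le\eps n$, say $|A_0|\ge\eps n/100$ (by stopping as soon as the size first exceeds $\eps n/100$, it is at most $\eps np|A_0'|\le\eps n$ for the previous set $A_0'$, so still well below $n/2$). One should check the iteration count stays under $\eta\log n$: since each step multiplies the size by at least $2$, and we only need to reach size $\eps n/100$ starting from size $1$, at most $\log_2(n)\le\eta\log n$ steps suffice once $\eta$ is not too small — more carefully, the doubling bound gives $\log_2(\eps n/100)$ steps, which is $O(\log n)$, and choosing the hierarchy so that this is $\le\eta\log n$ works (alternatively one uses the sharper multiplicative factor $\eps np/4$, needing only $O(\log n/\log(Cp n))$ rounds).

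Now I would switch to the second phase, where $A_0$ is large enough to feed into \cref{claim: rotations3}. Applying that lemma with $A=A_0\subseteq R^{\eta\log n}$ (note $t_0\le\eta\log n$) gives $|R_{G',P,u}(A_0)|\ge p^{-1}\delta_{G'}(A_0)-\eps n/10$. By \eqref{equa:XYedges1.61}-type minimum degree bound together with \eqref{equa:maxdegree} — precisely, since $H\in\mathcal{H}^{\eps}_{n,p}\subseteq\mathcal{H}^{\eps,\eps}_{n,p}$ and \cref{lem: mindeg} gives $d_G(v)\ge(1-\eta)np$ while $d_H(v)\le(1-\eps)np$ — every vertex has $\delta_{G'}(A_0)\ge(\eps-\eta)np$, but that only yields $\Omega(\eps n)$, which is not yet $(1/2+\eps/4)n$. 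To get the stronger bound I must exploit the P\'osa condition \eqref{equa:PosadegreePosa}: the reachable set $R_{G',P,u}(A_0)\eqqcolon A_1$ has size at least $\eps n/100$ again, so I may keep applying \cref{claim: rotations3} to $A_1, A_2,\ldots$; crucially, once a set $A_j$ contains a vertex of small index in the P\'osa ordering, the degree bound $\delta_{G'}$ over an appropriate subset of $A_j$ jumps. Concretely, suppose for contradiction $|\mathcal{R}|<(1/2+\eps/4)n$; then the complement $V\setminus\mathcal{R}$ has size $>(1/2-\eps/4)n$, and since the reachable endpoints form an independent-like set (no endpoint of a rotated path is adjacent in $G'$ to a vertex outside $\mathcal{R}$ without contradicting maximality of $P$ — this is the standard observation that if $w\in\mathcal{R}$ has a $G'$-neighbour outside $V(P)$ then $P$ was not longest), one derives that all of $\mathcal{R}$ has $G'$-neighbourhood inside $V(P)$, and moreover $|N_{G'}(\mathcal{R})|<\frac32|\mathcal{R}|$ by the rotation bound. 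Pairing this with \cref{prop: neigh}\ref{neighitem3} applied to $\mathcal{R}$ (legitimate since $|\mathcal{R}|\ge\eps n/100\ge n(\log n)^{-1/2}$ for large $n$) gives $|N_{G'}(\mathcal{R})|>(1-\eps^2/10)p^{-1}\delta_{G'}(\mathcal{R})$. Now pick the subset of $\mathcal{R}$ of size $|\mathcal{R}|$ consisting of the vertices of smallest index in the P\'osa ordering; if $|\mathcal{R}|\le n/2$ then the $|\mathcal{R}|$-th smallest index $i\le|\mathcal{R}|$ has, by \eqref{equa:PosadegreePosa} and \cref{lem: mindeg}, $\delta_{G'}(\mathcal{R})\ge(1-\eta)np-d_H(v_i)\ge(1-\eta)np-((n-i)p-\eps np)\ge(|\mathcal{R}|+\eps n/2)p$ — wait, one must order $\mathcal{R}$ by the P\'osa degrees, which is where the hypothesis $H\in\mathcal{H}^{\eps}_{n,p}$ bites — giving $|N_{G'}(\mathcal{R})|>(1-\eps^2/10)(|\mathcal{R}|+\eps n/2)>|\mathcal{R}|+\eps n/3$, contradicting $|N_{G'}(\mathcal{R})|<\frac32|\mathcal{R}|$ once $|\mathcal{R}|<(1/2+\eps/4)n$ is pushed to a sharper place; if instead $|\mathcal{R}|>n/2$ we are already past the target. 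So $|\mathcal{R}|\ge(1/2+\eps/4)n$.

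Finally, the extension step. Having $|\mathcal{R}|\ge(1/2+\eps/4)n$, consider the paths $P_x$ for $x\in\mathcal{R}$ and re-run rotations with the \emph{other} endpoint $x$ fixed: applying the same argument to each $P_x$ shows the set $\mathcal{R}'_x$ of endpoints reachable from the $v_\ell$-side is also of size $\ge(1/2+\eps/4)n$. Then for each $x$, $|\mathcal{R}|+|\mathcal{R}'_x|>n$ forces some $y\in\mathcal{R}\cap\mathcal{R}'_x$; by maximality of $P$ the edge $xy$ cannot be present in $G'$ for such a configuration unless... — more precisely this is the classical Pósa argument: set $U$ to be the set of all $x\in\mathcal{R}$, and for each such $x$ take $Q_x\coloneqq P_x$, which is a longest path in $G'\cup P'$ with endpoints $v_\ell$ and $x$ and with $V(Q_x)=V(P)$ since rotations preserve the vertex set. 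Thus $U\coloneqq\mathcal{R}$ and $Q_v\coloneqq P_v$ do the job. The main obstacle is the second phase: squeezing the sharp constant $(1/2+\eps/4)n$ out of the P\'osa condition \eqref{equa:PosadegreePosa} via the interplay of \cref{claim: rotations3} and \cref{prop: neigh}\ref{neighitem3}, in particular correctly handling the ordering of $\mathcal{R}$ by $H$-degrees and the case split at $|\mathcal{R}|=n/2$. The first (bootstrapping) phase and the final extension are routine adaptations of \cite{LS12}.
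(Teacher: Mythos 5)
Your overall architecture (bootstrap via \cref{claim: rotations1} and \cref{prop: neigh}\ref{neighitem1}, then amplify via \cref{claim: rotations3} and the P\'osa condition, then take $U\coloneqq\mathcal{R}_{G',P,v_\ell}$) matches the paper's, and the bootstrapping phase is fine once you use the multiplicative factor $\eps np/4$ rather than the false claim $\log_2 n\le\eta\log n$. The genuine gap is in the second phase, where you need the sharp constant $(1/2+\eps/4)n$. Your ``concrete'' closing argument does not work. First, $\delta_{G'}(\mathcal{R})$ is governed by the vertex of $\mathcal{R}$ of \emph{smallest} index in the P\'osa ordering (largest $H$-degree); that vertex may be $v_1$, whose $G'$-degree can be as small as roughly $\eps np$, so applying \cref{prop: neigh}\ref{neighitem3} to all of $\mathcal{R}$ only yields $|N_{G'}(\mathcal{R})|\gtrsim\eps n$. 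To exploit \eqref{equa:PosadegreePosa} you must pass to the $\approx\eps n/20$ vertices of $\mathcal{R}$ of \emph{largest} index, whose minimum $G'$-degree is indeed about $|\mathcal{R}|p+\eps np/2$ (capped at $np/2+\eps np/2$ when $|\mathcal{R}|>n/2$, since the P\'osa condition only covers $i<n/2$ --- this cap is exactly why the answer is $(1/2+\eps/4)n$ and it is absent from your sketch). Second, even granting $|N_{G'}(\mathcal{R})|\ge|\mathcal{R}|+\eps n/3$, your claimed upper bound $|N_{G'}(\mathcal{R})|<\tfrac32|\mathcal{R}|$ is unjustified (\cref{claim: rotations1} at stabilisation only gives $|N_{G'}(\mathcal{R})|\le5|\mathcal{R}|$), and either way the two inequalities only force $|\mathcal{R}|=\Omega(\eps n)$, not $|\mathcal{R}|\ge(1/2+\eps/4)n$; ``pushed to a sharper place'' is not an argument. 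Third, \cref{claim: rotations3} requires $A\subseteq R^{\eta\log n}_{G',P,v_\ell}$, so you cannot apply it (or feed its output back) to the full set $\mathcal{R}$ without tracking rotation counts.

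The correct way to close --- and what the paper does --- is to make your passing remark (``keep applying \cref{claim: rotations3} to $A_1,A_2,\ldots$'') the actual proof: isolate the fact that every $X$ with $|X|\ge\eps n/10$ contains $X'$ with $|X'|\ge\eps n/20$ and $\delta_{G'}(X')\ge\min\{|X|,n/2\}p+\eps np/2$, and iterate $Y_{j+1}\coloneqq R(Y_j')$ so that each round gains at least $\eps n/10$ new endpoints; after $O(1/\eps)$ rounds (still within $\eta\log n$ total rotations, which must be checked) the cap at $n/2$ delivers $(1/2+\eps/4)n$. As written, your proposal names the right ingredients but the step that produces the constant $(1/2+\eps/4)$ is not established.
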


\begin{proof}
Let $u\coloneqq v_\ell$.
Throughout this proof we write $R^t$ for $R_{G',P,u}^t$ and $R(A)\coloneqq R_{G',P,u}(A)$ for any $A\subseteq\mathcal{R}_{G',P,u}$.
Let $\eta$ be a number such that $1/C\ll\eta \ll \eps$. 
Condition on the event that the following holds for all $v\in V$:
\begin{align}\label{equa:degde}
d_{G}(v) = (1\pm \eta)np.
\end{align}
We also condition on the event that the assertions of \cref{prop: neigh} and \cref{claim: rotations3} hold for $G$.
By \cref{lem: mindeg,claim: rotations3,prop: neigh}, each of these events holds a.a.s.

Note that \eqref{equa:degde} and the fact that $H\in \mathcal{H}_{n,p}^{\eps}(G)$ imply that, for any set $X\subseteq V$  with $|X|\geq \eps n/10$,
\begin{equation}\label{eq: delta contains}
\begin{minipage}[c]{0.9\textwidth}
 there exists a set $X'\subseteq X$ with $|X'|\geq \eps n/20$ and $\delta_{G'}(X') \geq \min\{|X|, n/2 \}p+\eps np/2$.
\end{minipage}
\end{equation}

Note that, since $P$ is a longest path in $G'\cup P'$, we have that $N_{G'}(x) \subseteq V(P)$ for all $x\in\mathcal{R}_{G',P,u}$.
We will consider successive rotations of $P$, keeping $u$ fixed, to derive a lower bound on the number of distinct endpoints of different longest paths in $G'\cup P'$ with an endpoint $u$.

By \cref{claim: rotations1} together with the assertion of \cref{prop: neigh}\ref{neighitem1}, for each $t\geq0$,  we have
\[|R^{t+1} | \geq \frac{1}{2}\left( \min\left\{\frac{1}{2} \eps|R^t|np, \frac{1}{2}\eps n (\log{n})^{-1/4}\right\} - 3|R^t|\right).\]
As $R^0 =\{v_1\}$ and $\eps np/2 > \log{n}$, the above inequality implies that there exists $s \in \mathbb{N}$ with $s \leq \frac{1}{2} \eta \log{n}$ such that
\begin{equation}\label{equa:rotationsZtsize}
    |R^{s}| \ge \frac{\eps n}{ 5(\log{n})^{1/4}}. 
\end{equation}\COMMENT{Without loss of generality we can consider a subset of size $\frac{\eps n}{100(\log{n})^{1/4}}$. Note that $\frac{1}{4} ( \eps n (\log n)^{-1/4} ) - \frac{3\eps n}{ 100(\log{n})^{1/4}}\geq \frac{\eps n}{ 5(\log{n})^{1/4}}$, and ${\log n}^ {\log n} > n$.}
Again, by applying \cref{claim: rotations1}  together with the assertion of \cref{prop: neigh}\ref{neighitem3}, we obtain that 
$|R^{s+1}| \geq \eps n/10$.\COMMENT{Note that $\frac{1}{2}( \eps n/2 - 3\cdot \eps n/10 ) \geq \eps n/10$.}

Now, in order to show that $|R^{s+ 5\eps^{-1}+1}| \geq (1/2+\eps/4)n$, we  will iteratively construct sets $Y_0,\dots, Y_{5\eps^{-1}}$ as follows.

Let $Y_0\coloneqq R^{s+1}$. 
Suppose that for some $0\leq j < 5\eps^{-1}$ we have already constructed $Y_j$ with $|Y_j|\geq (j+1) \eps n/10$.
We use \eqref{eq: delta contains} to obtain a subset $Y'_j \subseteq Y_j$ with $|Y'_j| \geq  \eps n/20$ and $\delta_{G'}(Y'_j) \geq  (j+1) \eps np/10 + \eps np/2$. 
Let $Y_{j+1}\coloneqq R(Y'_j)$.
By \cref{claim: rotations3}, we have 
\[|Y_{j+1}| = |R(Y'_j)| \geq p^{-1}\delta_{G'}(Y'_j) - \eps n/10 
\geq (j+1)\eps n/10 + \eps n/2 - \eps n/10 \geq (j+2)\eps n/10.\]
Note that we can apply \cref{claim: rotations3} as $s+1+j\leq s+ 5\eps^{-1}\leq \frac{1}{2}\eta \log{n} + 5\eps^{-1} \leq \eta \log{n}$.
By repeating this for $0\leq j < 5\eps^{-1}$, we have 
$|Y_{5\eps^{-1}}| \geq (1/2 + \eps/4)n$\COMMENT{To get this, in the last step we use the second-to-last statement in the previous general formula.}.

By the construction, $Y_{5\eps^{-1}} \subseteq R^{s+5\eps^{-1}+1} \subseteq R^{\eta \log{n}}$. Letting $U\coloneqq R^{\eta \log{n}}$ concludes the proof.
\end{proof}

\begin{definition}\label{REdelta}
Let $\delta>0$. 
We say that a connected $n$-vertex graph $G$ has property $RE(\delta)$ if one of the following holds for every path $P$ on $V(G)$:
\begin{enumerate}[label=(\roman*)]
    \item there exists a path longer than $P$ in the graph $G \cup P$,
    \item there exists $S_P\subseteq V(G)$ with $|S_P| \ge \delta n$ and a collection $\{T_v : v\in S_P\}$ of subsets of $V(G)$ with $|T_v|\geq \delta n$ for all $v\in S_P$ satisfying the following: for all $v\in S_P$ and $w \in T_v$, the graph $G\cup P$ contains a path $Q$ between $v$ and $w$ with $V(Q)=V(P)$.
\end{enumerate}
\end{definition}

\begin{lemma}\label{lem: re2}
For every $0< \eps <1 $ there exists $C>0$ such that, for $p \geq C\log n/n$, the random graph $G=G_{n,p}$ a.a.s.~satisfies the following.
Let $H\in \mathcal{H}_{n,p}^{\eps}(G)$ and $G'\coloneqq G\setminus H$.
Then, $G'$ satisfies $RE(1/2 + \eps/4)$.
\end{lemma}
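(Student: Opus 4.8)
\textbf{Proof plan for \cref{lem: re2}.} The plan is to deduce this as an almost immediate corollary of \cref{lem: re} together with \cref{prop: neigh}\ref{neighitem4}. First I would choose $C$ large enough that both \cref{lem: re} and \cref{prop: neigh} apply with the given $\eps$, and condition on the a.a.s.\ event that $G=G_{n,p}$ satisfies the conclusions of both. Fix any $H\in\mathcal{H}_{n,p}^{\eps}(G)$ and set $G'\coloneqq G\setminus H$. Since $\mathcal{H}_{n,p}^{\eps}\subseteq\mathcal{H}_{n,p}^{\eps,\eps}$, \cref{prop: neigh}\ref{neighitem4} tells us that $G'$ is connected, which is the first requirement in \cref{REdelta}.

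Now fix an arbitrary path $P$ on $V(G')$; I must verify that one of the two alternatives in \cref{REdelta} holds with $\delta=1/2+\eps/4$. If $G'\cup P$ contains a path longer than $P$, then alternative (i) holds and there is nothing more to do. Otherwise $P$ is a longest path in $G'\cup P'$ with $P'\coloneqq P$ (note $P$ is trivially a path on $V$, so \cref{lem: re} applies with this choice of $P'$), and $P$ is itself a longest path in $G'\cup P'$. Write $P=v_1\dots v_\ell$. By \cref{lem: re}, there is a set $U\subseteq V$ with $|U|\geq(1/2+\eps/4)n$ such that for each $v\in U$ there is a longest path $Q_v$ in $G'\cup P'$ with endpoints $v_\ell$ and $v$ and $V(Q_v)=V(P)$.

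To get the symmetric `two-sided' structure demanded by \cref{REdelta}(ii), I would apply \cref{lem: re} a second time, once for each choice of fixed endpoint. Concretely: for each $v\in U$, the path $Q_v$ is a longest path in $G'\cup P'$ with $V(Q_v)=V(P)$, so applying \cref{lem: re} to $Q_v$ (with the fixed endpoint being $v$, playing the role of $v_\ell$) yields a set $T_v\subseteq V$ with $|T_v|\geq(1/2+\eps/4)n$ such that for each $w\in T_v$ there is a longest path in $G'\cup P'$ with endpoints $v$ and $w$ and the same vertex set $V(P)$. Setting $S_P\coloneqq U$ and using these sets $\{T_v:v\in S_P\}$, alternative (ii) of \cref{REdelta} is satisfied with $\delta=1/2+\eps/4$, since $1/2+\eps/4>\delta n$ is exactly the bound we need on both $|S_P|$ and each $|T_v|$. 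Hence $G'$ has property $RE(1/2+\eps/4)$, completing the proof.

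The only mild subtlety — and the step I would be most careful about — is checking that \cref{lem: re} may legitimately be reapplied to $Q_v$: this requires observing that $Q_v$ is a longest path in $G'\cup P'$ (true since $V(Q_v)=V(P)$ and $|V(P)|$ is maximal), that $Q_v$ is a path on $V$, and that the hypotheses on $G$ and $H$ are unchanged, so the a.a.s.\ event we conditioned on still supplies the conclusion of \cref{lem: re} for this new path. No new calculation is needed; everything is bookkeeping around the statement of \cref{lem: re}.
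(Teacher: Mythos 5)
Your proposal is correct and follows essentially the same route as the paper's proof: connectivity from \cref{prop: neigh}\ref{neighitem4}, one application of \cref{lem: re} to the path $P$ to obtain $S_P$, and a second application to each resulting path $Q_v$ (with $P'=P$, so that $Q_v$ is still a longest path in $G'\cup P$) to obtain the sets $T_v$. The only blemish is the phrase ``$1/2+\eps/4>\delta n$'', which should read $|S_P|,|T_v|\geq\delta n$ with $\delta=1/2+\eps/4$; this is purely a typo and does not affect the argument.
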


\begin{proof}
Recall that $G$ a.a.s.~satisfies the assertions of \cref{prop: neigh} and \cref{lem: re}.
We prove that $G'$ satisfies $RE(1/2 + \eps/4)$ conditioned on this.

By \cref{prop: neigh}\ref{neighitem4}, $G'$ is connected.
Let $P$ be any path on $V$.
We may assume that $G'\cup P$ does not contain a path which is longer than $P$.
Let one of the endpoints of $P$ be $u$. 
By \cref{lem: re}, there exists $S_P\subseteq V$ with $|S_P|\geq(1/2 + \eps/4)n$ and such that, for every $v\in S_P$, there exists a path $Q_v\subseteq G'\cup P$ with endpoints $u$ and $v$ such that $V(Q_v) = V(P)$. 
For each path $Q_v$ we can fix $v$ and apply \cref{lem: re} again to obtain a set $T_v\subseteq V$ such that $|T_v|\geq(1/2 + \eps/4)n$ and for every $x\in T_v$ there is a path $Q_{xv}\subseteq G'\cup P$ from $x$ to $v$ with $V(Q_{xv})=V(P)$.
The result follows.
\end{proof}

\begin{definition}\label{def: comp}
Let $\delta>0$ and let $G_1$ be a graph on $n$ vertices with property $RE(\delta)$.
We say that a graph $G_2$ with $V(G_2)=V(G_1)$ complements $G_1$ if, for every path $P$ on $V(G_1)$, one of the following holds:
\begin{enumerate}[label=(\roman*)]
\item\label{def46condition1} there exists a path longer than $P$ in $G_1 \cup P$,
\item\label{def46condition2} there exist sets $S_P$ and $T_v$ as in \cref{REdelta} and vertices $v \in S_P$ and $w \in T_v$ such that $vw$ is an edge of $G_1 \cup G_2$. 
\end{enumerate}
\end{definition}

\begin{proposition}[\cite{LS12}]\label{prop: comp}
Let $\delta>0$.
For every $G_1 \in RE(\delta)$ and $G_2$ complementing $G_1$, the union $G_1\cup G_2$ is Hamiltonian.
\end{proposition}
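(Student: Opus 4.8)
\textbf{Proof proposal for \cref{prop: comp}.}
The plan is to argue by contradiction. Suppose $G_1\in RE(\delta)$, $G_2$ complements $G_1$, but $G\coloneqq G_1\cup G_2$ is not Hamiltonian. Among all longest paths in $G$ (equivalently, among all paths $P$ on $V(G_1)$ for which $G_1\cup P$ contains no longer path and moreover $G$ itself contains no path longer than $P$), pick one, say $P$, and note that since $G$ is connected (being a supergraph of $G_1\in RE(\delta)$, which is connected by definition) and not Hamiltonian, $P$ misses at least one vertex; in fact a standard argument shows that a longest path in a connected non-Hamiltonian graph cannot be extended into a cycle on the same vertex set either. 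First I would apply \cref{REdelta} to this $P$: since $G_1\cup P$ has no longer path, alternative (ii) holds, so we obtain sets $S_P$ and $\{T_v:v\in S_P\}$, each of size at least $\delta n$, such that for every $v\in S_P$ and $w\in T_v$ there is a path $Q$ in $G_1\cup P$ with $V(Q)=V(P)$ and endpoints $v,w$.

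Next I would apply \cref{def: comp}: since $G_2$ complements $G_1$ and (again) $G_1\cup P$ contains no path longer than $P$, alternative \ref{def46condition2} must hold, so there are $v\in S_P$, $w\in T_v$ with $vw\in E(G_1\cup G_2)=E(G)$. Now take the path $Q$ guaranteed by \cref{REdelta} for this particular pair $(v,w)$: it has $V(Q)=V(P)$ and endpoints $v$ and $w$, and it lies in $G_1\cup P\subseteq G\cup P$. Adding the edge $vw$ to $Q$ closes it into a cycle $C$ with $V(C)=V(P)\subsetneq V(G)$. Since $G$ is connected and $V(C)\neq V(G)$, some vertex of $V(C)$ has a neighbour (in $G$) outside $V(C)$; deleting an edge of $C$ incident to that vertex and appending the outside neighbour produces a path in $G$ on $|V(P)|+1$ vertices, contradicting the maximality of $P$ as a longest path of $G$. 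This contradiction shows $G_1\cup G_2$ is Hamiltonian.

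The one point that needs a little care — and the only real obstacle — is making sure the cycle $C$ actually lives in $G$ rather than merely in $G\cup P$, so that the final rotation-free extension step is legitimate. The path $Q$ from \cref{REdelta} is a priori only asserted to lie in $G_1\cup P$; however, since $V(Q)=V(P)$ and $Q$ is a Hamilton path of the induced subgraph on $V(P)$, any edge of $Q$ that is not already an edge of $G_1$ must be an edge of $P$. One then observes that a Hamilton path of $G[V(P)]$ all of whose non-$G_1$ edges are edges of $P$, once closed up by the edge $vw\in E(G)$, yields a cycle on $V(P)$; and because $P$ was chosen to be a genuine longest path of $G$ (not just of $G_1\cup P$), while $Q\cup\{vw\}$ uses only edges available in $G$ together with possibly edges of $P$, a short case analysis — replacing any used $P$-edge is unnecessary since the resulting closed walk already spans $V(P)$ in $G\cup P$, and one extracts from it the required longer path in $G$ by the connectivity step above — gives the contradiction. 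I would in fact phrase this exactly as in \citet{LS12}: the combination "long path + rotation structure + one extra edge + connectivity" forces either a longer path or a Hamilton cycle, and the hypothesis that $P$ is longest rules out the former.
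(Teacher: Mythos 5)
Your argument is correct and is essentially the standard rotation--extension proof that the paper attributes to \citet{LS12} without reproducing it: a longest path $P$ of $G_1\cup G_2$ is also a longest path in $G_1\cup P$, so alternative (ii) of both \cref{REdelta} and \cref{def: comp} applies, the edge $vw$ closes the path $Q\subseteq G_1\cup P\subseteq G_1\cup G_2$ into a cycle on $V(P)$, and connectivity of $G_1$ finishes the argument. Two inessential slips: the claim that $P$ must miss a vertex is unjustified (a longest path in a connected non-Hamiltonian graph can be spanning), but then $C$ would itself be a Hamilton cycle, so both cases give the desired contradiction; and the worry about $C$ living in $G$ rather than $G\cup P$ is vacuous, since $P$ is a path of $G$ and hence $G\cup P=G$ and $G_1\cup P\subseteq G$.
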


Finally, we state two lemmas which are used to complete the proof of \cref{teor:posa}. 
The first says that, given $G = G_{n,p}$ and $H \in \mathcal{H}^\eps_{n,p}(G)$, the graph $G\setminus H$ complements every `small' subgraph of $G$ which has property $RE(1/2 + \eps/4)$. 
The final lemma then says that $G'$ actually contains some such `small' graph as a subgraph.
We include the proof of \cref{lem: ext} in the appendix.

\begin{lemma}\label{lem: ext}
For every $0<\eps <1$, there exist $C, \delta >0$ such that for $p \geq C\log n/n$ we have that $G = G_{n,p}$ a.a.s.~satisfies the following property:
for any $H \in \mathcal{H}^\eps_{n,p}(G)$, the graph $G\setminus H$ complements all graphs $R\subseteq G$ which satisfy $RE(1/2 + \eps/4)$ and have at most $\delta n^2 p$ edges.
\end{lemma}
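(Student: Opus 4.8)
The plan is to show that, for a suitable choice of $\delta$, whenever $R\subseteq G$ has property $RE(1/2+\eps/4)$ and at most $\delta n^2p$ edges, and $G'\coloneqq G\setminus H$ with $H\in\mathcal H^\eps_{n,p}(G)$, then $G'$ complements $R$ in the sense of \cref{def: comp}. Fix a path $P$ on $V$ and assume condition \ref{def46condition1} of \cref{def: comp} fails, i.e.\ $R\cup P$ contains no path longer than $P$. Since $R\in RE(1/2+\eps/4)$, there are sets $S_P$ with $|S_P|\ge(1/2+\eps/4)n$ and $\{T_v:v\in S_P\}$ with $|T_v|\ge(1/2+\eps/4)n$ as in \cref{REdelta}. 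The goal is to find $v\in S_P$ and $w\in T_v$ with $vw\in E(G')$. Equivalently, writing $W\coloneqq\{(v,w):v\in S_P,\ w\in T_v\}$, we must show that $E_{G'}$ meets $W$; it suffices to show that $e_G(v,T_v)>d_H(v)$ for some $v\in S_P$, so that even after deleting the edges of $H$ at $v$ some edge from $v$ to $T_v$ survives in $G'$.

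The key point is a counting argument. First, for each $v\in S_P$, since $|T_v|\ge n/2$, the event \eqref{equa:XYedges1.62}-type estimate of \cref{lem: edges} (or directly \cref{lem: mindeg} applied carefully) gives $e_G(v,T_v)\ge(1/2-\eta)np$ for all but a negligible set of vertices; more robustly, summing over $v\in S_P$ and using \cref{lem: edges} with $X=S_P$, $Y=\bigcup_v T_v$ is too crude, so instead I would argue pointwise: by \cref{lem: edges} applied to the pair $(\{v\},T_v)$ for every vertex $v$ and every set of size $\ge n/2$ — which holds a.a.s.\ simultaneously for all such pairs — we get $e_G(v,T_v)=|T_v|p\pm c\sqrt{np}\ge(1/2+\eps/4-o(1))np$. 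On the other hand, by \eqref{equa:maxdegree} we have $d_H(v)\le(1-\eps)np$ for every vertex $v$ of $H$. But $(1/2+\eps/4)np$ need not exceed $(1-\eps)np$, so a single vertex does not suffice: here is where the bound on $e(R)$ enters. The set $S_P$ has at least $(1/2+\eps/4)n$ vertices, and the total number of edges of $G$ incident to $S_P$ that lie in $R$ is at most $2e(R)\le2\delta n^2p$, so the \emph{average} over $v\in S_P$ of $e_R(v,T_v)$ is at most $2\delta n^2p/((1/2+\eps/4)n)\le 5\delta np$; pick $\delta$ so small that $5\delta<\eps/8$. Meanwhile — and this is the real content — I claim that for \emph{every} $v\in S_P$ the number of edges from $v$ into $T_v$ that are \emph{forbidden} (i.e.\ lie in $H\cup R$) must be bounded below whenever no complementing edge exists: if $vw\notin E(G')$ for all $w\in T_v$, then every $G$-edge from $v$ to $T_v$ lies in $H$, so $d_H(v)\ge e_G(v,T_v)\ge(1/2+\eps/4-o(1))np$. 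Summing the inequality $d_H(v)\ge e_G(v,T_v)$ over $v\in S_P$ and comparing with $\sum_{v\in V}d_H(v)=2e(H)$ gives a contradiction with the Chv\'atal/P\'osa degree bound: indeed $H\in\mathcal H^\eps_{n,p}$ forces $\sum_i d_H(v_i)$ to be appreciably less than $\tfrac12 n\cdot n p$ — precisely, since $d_H(v_i)\le(n-i)p-\eps np$ for all $i<n/2$ (and $\le(1-\eps)np$ always), we get $\sum_{v}d_H(v)\le\big(\tfrac12 n^2 - \eps n^2 + O(n)\big)p$, whereas the at-least-$(1/2+\eps/4)n$ vertices of $S_P$ alone would contribute at least $(1/2+\eps/4)(1/2+\eps/4-o(1))n^2p>(1/4+\eps/4)n^2p$, which is still consistent — so the averaging must be done more carefully against the \emph{large-degree half} $\{v_1,\dots,v_{\lceil n/2\rceil}\}$, using that $S_P$ being more than half the vertices forces it to contain at least $\eps n/4$ vertices $v_i$ with small index $i$, for which the bound $e_G(v_i,T_{v_i})\ge(1/2+\eps/4)np$ exceeds $d_H(v_i)\le(n-i)p-\eps np$ once $i\le n/4$, say. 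Thus some $v\in S_P$ has $e_G(v,T_v)>d_H(v)$, hence a $G'$-edge from $v$ to $T_v$, completing the verification of \cref{def: comp}\ref{def46condition2}.

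The main obstacle is the bookkeeping in the last step: one must combine three inputs — the lower bound $e_G(v,T_v)\ge(1/2+\eps/4-o(1))np$ from \cref{lem: edges}, the P\'osa/Chv\'atal degree ceiling on $d_H$ from \eqref{equa:maxdegree} and \cref{chvatal}, and the edge budget $e(R)\le\delta n^2p$ — and extract from $|S_P|,|T_v|\ge(1/2+\eps/4)n$ the existence of a single vertex $v\in S_P$ with a surviving edge into $T_v$ in $G'=G\setminus H$, noting that the edges of $R$ being few means $R$ cannot "use up" the complementing edge first, while $H$ cannot cover all edges from $v$ to $T_v$ because its degree at the relevant (small-index) vertices of $S_P$ is strictly below $|T_v|p$. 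I would organise this as: (1) condition on the a.a.s.\ event of \cref{lem: edges} and \cref{lem: mindeg}; (2) fix $P$, assume \ref{def46condition1} fails, invoke $RE(1/2+\eps/4)$ to get $S_P$ and the $T_v$; (3) observe $|S_P|>n/2$ implies $S_P$ contains a vertex $v_i$ with $i\le(1/2-\eps/4)n$, hence $d_H(v_i)\le(n-i)p-\eps np<(1/2+\eps/4)np\le|T_{v_i}|p-c\sqrt{np}\le e_G(v_i,T_{v_i})$; (4) conclude there is an edge $v_iw\in E(G)\setminus E(H)=E(G')$ with $w\in T_{v_i}$, which is the required edge — and here $\delta$ is in fact only needed to ensure $R$ is small enough that $R\subseteq G$ is consistent with the earlier lemmas, so one may take $\delta$ to be any sufficiently small constant.
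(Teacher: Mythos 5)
Your proposal has two genuine gaps, one of which is fatal to the whole approach. The central claim in your step~(1)/(3) --- that a.a.s.\ $e_G(v,T_v)\ge |T_v|p-c\sqrt{np}\ge (1/2+\eps/8)np$ \emph{simultaneously} for every vertex $v$ and every set $T_v$ of size at least $(1/2+\eps/4)n$ --- is false in the regime $p=\Theta(\log n/n)$. \cref{lem: edges} does not give this: for $X=\{v\}$ and $|Y|\approx n/2$ its error term is $c\sqrt{|X||Y|np}=\Theta(n\sqrt{p})$, which dwarfs the main term $|Y|p=\Theta(np)$ when $p=o(1)$. Nor can you get it from Chernoff plus a union bound: a single vertex against a single set of size $n/2$ fails with probability $e^{-\Omega(np)}=n^{-\Omega(C)}$, and there are $2^n$ candidate sets $T_v$ (they depend on $R$ and $P$, which are not fixed in advance), so $2^n e^{-\Omega(np)}\not\to 0$. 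This is precisely the difficulty the lemma is designed around: the paper conditions on $R\subseteq G$, picks not one but $\eps n/8$ vertices $S\subseteq S_P$ all having index $\ge n/2$ in the ordering of $H$ (so that $d_H(v)\le(1/2-\eps)np$ for each), and uses that the sets $T_{v,P}\setminus S$ are disjoint from $S$ to make the events $\{e_G(v,T_{v,P}\setminus S)\le np/2\}$ independent across $v\in S$. This boosts the failure probability to $e^{-\Omega_\eps(n^2p)}$, which then survives a union bound over the $n\cdot n!$ paths $P$, the $2^n$ sets $S$, and --- crucially --- over all candidate graphs $R$ with at most $\delta n^2p$ edges, whose number weighted by $\mathbb{P}[R\subseteq G]$ is $e^{O(\delta n^2p\log(1/\delta))}$. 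So your closing remark that $\delta$ is ``only needed to ensure $R\subseteq G$ is consistent with the earlier lemmas'' is also wrong: the edge bound on $R$ is exactly what makes this last union bound converge.

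Separately, step~(3) reverses the direction of the degree ordering. In \cref{posa} the degrees are ordered \emph{decreasingly}, so a vertex $v_i$ with small index $i$ is permitted to have $d_H(v_i)$ as large as $(n-i)p-\eps np\approx(1-\eps)np$; your chain $d_H(v_i)\le(n-i)p-\eps np<(1/2+\eps/4)np$ for $i\le(1/2-\eps/4)n$ is false. What you need (and what the paper uses) is that $|S_P|>n/2$ forces $S_P$ to contain at least $\eps n/4$ vertices of index $\ge n/2$, for which monotonicity of the degree sequence gives $d_H(v)\le(1/2-\eps)np+p$. With that correction your deterministic comparison $e_G(v,T_v)>d_H(v)$ would be the right inequality to aim for --- but, as explained above, the lower bound on $e_G(v,T_v)$ cannot be established pointwise and must instead be pushed through the conditional, multi-vertex union bound argument.
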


\begin{lemma} \label{lem: exist}
For all $0<\eps, \delta \le 1$, there exists $C >0$ such that, for $p \ge C\log n/n$, the graph $G= G_{n,p}$ a.a.s.~satisfies the following property.
Let $H\in \mathcal{H}^{2\eps}_{n,p}(G)$.
Then, $G \setminus H$ contains a subgraph with at most $\delta n^2 p$ edges satisfying $RE(1/2 + \eps/4)$.
\end{lemma}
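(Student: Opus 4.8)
\textbf{Proof proposal for \cref{lem: exist}.}
The plan is to build the desired subgraph $R \subseteq G\setminus H$ greedily by exposing a sparse random subgraph of $G$ and verifying it has property $RE(1/2+\eps/4)$ with high probability, so that at least one choice works for every $H$ simultaneously. More precisely, I would sprinkle: write $G_{n,p}$ as the union $G_1 \cup G_2$ of two independent random graphs, where $G_1 = G_{n,p_1}$ with $p_1 \coloneqq \delta' p$ for a suitably small $\delta' = \delta'(\eps,\delta)$, so that $G_1$ a.a.s.~has at most $\delta n^2 p$ edges by \cref{lem: chernoff}. The point is that $G_1$ by itself, after deleting any $H \in \mathcal{H}^{2\eps}_{n,p}(G)$, will already expand well enough to satisfy $RE(1/2+\eps/4)$, because the argument leading to \cref{lem: re2} only used the expansion properties recorded in \cref{prop: neigh} and \cref{claim: rotations3}, and those hold for $G_{n,p_1}$ as long as $p_1 \ge C'\log n/n$ — which we arrange by choosing $C$ large enough relative to $1/\delta'$.

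The key steps, in order, are: (1) split $G_{n,p} = G_1 \cup G_2$ with $G_1 \sim G_{n, p_1}$, $p_1 = \delta' p$, and note that since $p \ge C\log n/n$ with $C$ large we have $p_1 \ge C'\log n/n$ for whatever constant $C'$ the cited lemmas require; (2) apply \cref{lem: chernoff} to $e(G_1) \sim \mathrm{Bin}(\binom n2, p_1)$ to see that a.a.s.~$e(G_1) \le 2p_1\binom n2 \le \delta n^2 p$, using $\delta' \le \delta/2$; (3) observe that the statements of \cref{prop: neigh} and \cref{claim: rotations3} hold a.a.s.~with $G_1$ in place of $G_{n,p}$ — here the relevant subgraph class is $\mathcal{H}^{2\eps}_{n,p_1}(G_1)$, but since any $H \in \mathcal{H}^{2\eps}_{n,p}(G)$ has maximum degree at most $(1-2\eps)np$ inside $G$, its intersection with $G_1$ has the relevant degree bounds relative to $np_1$ up to the slack $\eps$ absorbs — one checks the $2\eps$ in the hypothesis is precisely what is needed so that removing $H \cap G_1$ from $G_1$ leaves the $\eps$-expansion of \cref{prop: neigh}; (4) run the proof of \cref{lem: re} and hence \cref{lem: re2} verbatim with $G_1, G_1 \setminus (H\cap G_1)$ in place of $G, G'$, concluding that $G_1 \setminus H$ satisfies $RE(1/2+\eps/4)$; (5) since $R \coloneqq G_1 \setminus H \subseteq G_1 \subseteq G$ and $R \cap H = \es$ so $R \subseteq G\setminus H$, and $e(R) \le e(G_1) \le \delta n^2 p$, the graph $R$ is as required.

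The main obstacle is step (3): one must be careful about \emph{which} class of forbidden subgraphs the expansion lemmas are being applied to. \cref{prop: neigh} is stated for $H \in \mathcal{H}^{\eps,\eps}_{n,p}(G)$, and the cleanest route is to note that $\mathcal{H}^{2\eps}_{n,p} \subseteq \mathcal{H}^{2\eps,2\eps}_{n,p} \subseteq \mathcal{H}^{\eps,\eps}_{n,p}$ trivially, but the genuine subtlety is passing from degrees measured against $np$ to degrees measured against $np_1$ after intersecting with $G_1$: a vertex that loses $(2\eps)\cdot(\text{a lot})$ of its $G$-degree to $H$ need not lose a proportional amount of its $G_1$-degree. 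The fix is to expose $G_1$ \emph{after} fixing $H$ is not allowed (we need a single $G_1$ working for all $H$), so instead one argues at the level of the conclusion: what we actually need is that $G_1 \setminus H$ has the expansion of \cref{prop: neigh} with parameter $\eps$, and for this it suffices that $H$ restricted to $G_1$ has maximum degree at most $(1-\eps)np_1$ — but that can \emph{fail}, since $H$ could concentrate on $G_1$-edges. The correct resolution, which I would carry out in detail, is that the expansion bounds in \cref{prop: neigh}\ref{neighitem1} and the rotation lemmas only need $|N_{G_1\setminus H}(X)| \ge |N_{G_1}(X)| - \Delta(H)|X|$ combined with $\Delta(H) \le (1-2\eps)np$, and crucially the \emph{other} $\eps np$ of slack in the P\'osa condition for $\mathcal{H}^{2\eps}_{n,p}$ is what guarantees enough surviving neighbours relative to $np_1$ once we also use that $|N_{G_1}(X)| \approx |N_G(X)|$ for the relevant large sets $X$ by \cref{lem: edges} applied to $G_1$ — so one redoes the three parts of \cref{prop: neigh}'s proof tracking the $p_1$ scaling, and everything goes through because the P\'osa/Chv\'atal slack $\eps np$ dominates all error terms of order $\eta n p_1 \ll \eps n p_1 \le \eps n p$.
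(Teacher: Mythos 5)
Your overall strategy (sparsify, then run the $RE$ machinery at the sparser density) is the right first idea, but the proposal has a genuine gap exactly at the point you flag and do not repair: a single pre-exposed sparse graph $G_1\sim G_{n,p_1}$ cannot work for \emph{all} $H\in\mathcal{H}^{2\eps}_{n,p}(G)$ simultaneously. Concretely, take $W\subseteq V$ with $|W|=n/2$ and let $H$ consist of every edge of $G_1$ incident to $W$. Every vertex then has $d_H(v)\le(1+o(1))np_1=(1+o(1))\delta' np$, which is far below $(n-i)p-2\eps np$ for every $i<n/2$ once $\delta'<1/2-2\eps$, so $H\in\mathcal{H}^{2\eps}_{n,p}(G)$; yet every vertex of $W$ is isolated in $G_1\setminus H$, which is therefore disconnected and cannot satisfy $RE(1/2+\eps/4)$. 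Your proposed resolution via $|N_{G_1\setminus H}(X)|\ge|N_{G_1}(X)|-\Delta(H)|X|$ is vacuous here, since $|N_{G_1}(X)|\approx np_1|X|=\delta' np|X|$ while $\Delta(H)$ may be as large as $(1-2\eps)np\gg np_1$; no amount of $\eps np$ slack in the P\'osa condition at scale $p$ rescues an inequality whose right-hand side is negative. The underlying issue is that membership of $H$ in $\mathcal{H}^{2\eps}_{n,p}$ controls degrees relative to $np$, not relative to $np_1$, and an adversarial $H$ can concentrate entirely on the sparse layer.

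The paper's proof circumvents this by making the sparsification a \emph{second, independent} round of randomness that is brought in after $H$ is fixed, while avoiding a union bound over $H$ via a ``goodness'' device. It calls a graph $F$ good if $e(F)\le\delta n^2p$ and $F\setminus H'$ satisfies $RE(1/2+\eps/4)$ for \emph{every} $H'\in\mathcal{H}^{\eps}_{n,p'}$ with $p'=\delta p$ (a deterministic property of $F$ alone, verified a.a.s.\ for $G_{n,p'}$ by \cref{lem: edges,lem: re2}); a reverse-Markov argument shows that a.a.s.\ $G$ lies in the family $\mathcal{F}$ of graphs whose random $\delta$-sparsification $\hat G$ is good with probability at least $3/4$. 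Then, for any fixed $H\in\mathcal{H}^{2\eps}_{n,p}(G)$, \cref{lem: chernoff} shows that $\hat G\cap H$ has each degree concentrated around $\delta d_H(v)$, hence $\hat G\cap H\in\mathcal{H}^{\eps}_{n,p'}$ with probability $1-o(1)$; since $3/4+(1-o(1))>1$ there is a realisation of $\hat G$ that is simultaneously good and satisfies $\hat G\cap H\in\mathcal{H}^{\eps}_{n,p'}$, whence $\hat G\setminus H\subseteq G\setminus H$ is the required subgraph. This is precisely what your counterexample scenario needs: because $\hat G$ is sampled independently of $H$, the adversary cannot concentrate $H$ on the sparse layer, and the degrees of $H$ restricted to $\hat G$ scale down proportionally. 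To repair your write-up you would need to replace the fixed $G_1$ by this two-measure argument (or something equivalent); as written, steps (3)--(4) fail.
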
 

\begin{proof}
Let $1/n\ll1/C\ll\eps,\delta$ and $1/c<1$.
Let $p' \coloneqq \delta p$.
We say that a graph $F$ on $V$ is \textit{good} if it has at most $n^2p' = \delta n^2 p$ edges and, for all $H \in \mathcal{H}^{\eps}_{n,p'}$, the graph $F\setminus H$ satisfies $RE(1/2 + \eps/4)$.
Otherwise, we call it \textit{bad}.
Given any graph $F$ on $V$, let $\hat{F}$ be the graph obtained from $F$ by taking every edge of $F$ independently with probability $\delta$.

Let $\hat{\mathbb{P}}$ be the measure associated with the experiment $\hat{F}$.  
Let $\mathbb{P}_{\text{total}}$ be the product measure obtained from considering the experiments yielding $G_{n,p}$ and $\hat{G}_{n,p}$ (i.e.~with respective measures $\mathbb{P}$ and $\hat{\mathbb{P}}$).
Note that, by definition, the edge distribution of $\hat{G}_{n,p}$ is identical to that of $G_{n,p'}$.
It follows by \cref{lem: edges,lem: re2} that $\mathbb{P}_{\text{total}}[\hat{G}_{n,p} \text{ is good}] = \mathbb{P}[{G}_{n,p'} \text{ is good}] = 1-o(1)$.

Let $\mathcal{F}$ be the collection of all graphs $F$ on $V$ for which $\hat{\mathbb{P}}[\hat{F} \text{ is good}] \ge 3/4$.
Since
\[o(1) = \mathbb{P}_{\text{total}}[\hat{G}_{n,p} \text{ is bad}] \ge \mathbb{P}[G_{n,p} \notin \mathcal{F}]\,\mathbb{P}_{\text{total}}[\hat{G}_{n,p} \text{ is bad} \mid G_{n,p} \notin \mathcal {F}] \ge \mathbb{P}[G_{n,p} \notin \mathcal{F}]/4,\]
we know that $\mathbb{P}[G_{n,p} \notin \mathcal{F}] = o(1)$ or, in other words, $\mathbb{P}[G_{n,p} \in \mathcal{F}] = 1- o(1)$.
Thus, from now on, we consider $G = G_{n,p}$ and condition on the event that $G \in \mathcal{F}$.

Let $H \in  \mathcal{H}^{2\eps}_{n,p}(G)$.
Using \cref{lem: chernoff} and taking a union bound over all vertices in $V$, we have that $\hat{\mathbb{P}}[\hat{G} \cap H \in \mathcal{H}^{\eps}_{n,p'}]=1-o(1)$.
Since $\hat{G}$ is good with probability at least $3/4$, and $\hat{G} \cap H \in \mathcal{H}^{\eps}_{n,p'}$ with probability $1 - o(1)$, there exists a choice of $\hat{G}$ which satisfies these two properties.
For such $\hat{G}$, by the definition of good, the graph $\hat{G} \setminus H$ satisfies $RE(1/2 + \eps/4)$.
Moreover, $\hat{G}$ has at most $\delta n^2 p$ edges and, hence, so does $\hat{G} \setminus H$.
Since $\hat{G} \setminus H \subseteq G \setminus H$, the result follows.
\end{proof}

The proof of \cref{teor:posa} now follows from the previous results.

\begin{proof}[Proof of \cref{teor:posa}]
Let $1/n\ll1/C\ll\delta \ll \eps$.  
Condition on the assertions of \cref{lem: ext,lem: exist} holding with $\eps/2$ instead of $\eps$, which happens a.a.s.
We will show that for any $H \in \mathcal{H}^{\eps}_{n,p}(G)$, the graph $G\setminus H$ is Hamiltonian.

Let $H$ be a graph as above.
By \cref{lem: exist}, there exists a subgraph $G^*$ of $G\setminus H$ which has at most $\delta n^2p$ edges and satisfies property $RE(1/2 + \eps/8)$.
By \cref{lem: ext} we have that $G \setminus H$ complements $G^*$.
Therefore, \cref{prop: comp} implies that $G \setminus H$ is Hamiltonian.
\end{proof}

\section*{Acknowledgements}

We are grateful to Ant\'onio Gir\~ao for some helpful discussions which led us to simplify one of our proofs.

\bibliographystyle{afstylenonumber}
\bibliography{resilience.bib}

\providecommand{\noopsort}[1]{}
\begin{thebibliography}{34}
\providecommand{\natexlab}[1]{#1}
\providecommand{\url}[1]{\texttt{#1}}
\providecommand{\urlprefix}{URL }
\providecommand{\selectlanguage}[1]{\relax}
\providecommand{\bibAnnoteFile}[1]{%
  \IfFileExists{#1}{\begin{quotation}\noindent\textsc{Key:} #1\\
  \textsc{Annotation:}\ \input{#1}\end{quotation}}{}}
\providecommand{\bibAnnote}[2]{%
  \begin{quotation}\noindent\textsc{Key:} #1\\
  \textsc{Annotation:}\ #2\end{quotation}}
\providecommand{\eprint}[2][]{\url{#2}}

\bibitem[Ajtai, Koml\'os and Szemer\'edi(1985)]{AKS85}
M.~Ajtai, J.~Koml\'os and E.~Szemer\'edi, First occurrence of {H}amilton cycles
  in random graphs, \emph{Cycles in graphs ({B}urnaby, {B}.{C}., 1982)}, vol.
  115 of \emph{North-Holland Math. Stud.}, 173--178, North-Holland, Amsterdam
  (1985).
\bibAnnoteFile{AKS85}

\bibitem[{Allen}, {B{\"o}ttcher}, {Ehrenm{\"u}ller} and {Taraz}(2016)]{ABET}
P.~{Allen}, J.~{B{\"o}ttcher}, J.~{Ehrenm{\"u}ller} and A.~{Taraz}, {The
  Bandwidth Theorem in sparse graphs}, \emph{ArXiv e-prints}  (2016),
  \eprint{1612.00661}.
\bibAnnoteFile{ABET}

\bibitem[Balogh, Csaba and Samotij(2011)]{BCS11}
J.~Balogh, B.~Csaba and W.~Samotij, Local resilience of almost spanning trees
  in random graphs, \emph{Random Structures Algorithms} \textbf{38} (2011),
  121--139.
\bibAnnoteFile{BCS11}

\bibitem[Balogh, Lee and Samotij(2012)]{BLS}
J.~Balogh, C.~Lee and W.~Samotij, Corr\'adi and {H}ajnal's theorem for sparse
  random graphs, \emph{Combin. Probab. Comput.} \textbf{21} (2012), 23--55.
\bibAnnoteFile{BLS}

\bibitem[Ben-Shimon, Krivelevich and Sudakov(2011{\natexlab{a}})]{BKS11b}
S.~Ben-Shimon, M.~Krivelevich and B.~Sudakov, Local resilience and
  {H}amiltonicity maker-breaker games in random regular graphs, \emph{Combin.
  Probab. Comput.} \textbf{20} (2011{\natexlab{a}}), 173--211.
\bibAnnoteFile{BKS11b}

\bibitem[Ben-Shimon, Krivelevich and Sudakov(2011{\natexlab{b}})]{BKS}
S.~Ben-Shimon, M.~Krivelevich and B.~Sudakov, On the resilience of
  {H}amiltonicity and optimal packing of {H}amilton cycles in random graphs,
  \emph{SIAM J. Discrete Math.} \textbf{25} (2011{\natexlab{b}}), 1176--1193.
\bibAnnoteFile{BKS}

\bibitem[Bollob\'as(1984)]{Bol84}
B.~Bollob\'as, The evolution of sparse graphs, \emph{Graph theory and
  combinatorics ({C}ambridge, 1983)}, 35--57, Academic Press, London (1984).
\bibAnnoteFile{Bol84}

\bibitem[Bollob\'{a}s and Thomason(1985)]{BT}
B.~Bollob\'{a}s and A.~Thomason, Random graphs of small order, \emph{Random
  graphs '83 ({P}ozna\'{n}, 1983)}, vol. 118 of \emph{North-Holland Math.
  Stud.}, 47--97, North-Holland, Amsterdam (1985).
\bibAnnoteFile{BT}

\bibitem[Chv\'atal(1972)]{Chva72}
V.~Chv\'atal, On {H}amilton's ideals, \emph{J. Combinatorial Theory Ser. B}
  \textbf{12} (1972), 163--168.
\bibAnnoteFile{Chva72}

\bibitem[{Condon}, {Espuny D{\'i}az}, {Gir{\~a}o}, {K{\"u}hn} and
  {Osthus}(2019)]{CEGKO}
P.~{Condon}, A.~{Espuny D{\'i}az}, A.~{Gir{\~a}o}, D.~{K{\"u}hn} and
  D.~{Osthus}, Dirac's theorem for random regular graphs, \emph{arXiv e-prints}
   (2019), \eprint{1903.05052}.
\bibAnnoteFile{CEGKO}

\bibitem[Dirac(1952)]{Dirac52}
G.~A. Dirac, Some theorems on abstract graphs, \emph{Proc. London Math. Soc.}
  \textbf{2} (1952), 69--81.
\bibAnnoteFile{Dirac52}

\bibitem[Erd\H{o}s and R\'{e}nyi(1964)]{ER1}
P.~Erd\H{o}s and A.~R\'{e}nyi, On random matrices, \emph{Magyar Tud. Akad. Mat.
  Kutat\'{o} Int. K\"{o}zl} \textbf{8} (1964), 455--461.
\bibAnnoteFile{ER1}

\bibitem[Erd\H{o}s and R\'{e}nyi(1966)]{ER2}
P.~Erd\H{o}s and A.~R\'{e}nyi, On the existence of a factor of degree one of a
  connected random graph, \emph{Acta Math. Acad. Sci. Hungar.} \textbf{17}
  (1966), 359--368.
\bibAnnoteFile{ER2}

\bibitem[Ferber, Nenadov, Noever, Peter and \v{S}kori\'c(2017)]{FNNPS17}
A.~Ferber, R.~Nenadov, A.~Noever, U.~Peter and N.~\v{S}kori\'c, Robust
  {H}amiltonicity of random directed graphs, \emph{J. Combin. Theory Ser. B}
  \textbf{126} (2017), 1--23.
\bibAnnoteFile{FNNPS17}

\bibitem[Frieze and Krivelevich(2008)]{FK08}
A.~Frieze and M.~Krivelevich, On two {H}amilton cycle problems in random
  graphs, \emph{Israel J. Math.} \textbf{166} (2008), 221--234.
\bibAnnoteFile{FK08}

\bibitem[Hefetz, Steger and Sudakov(2016)]{HSS16}
D.~Hefetz, A.~Steger and B.~Sudakov, Random directed graphs are robustly
  {H}amiltonian, \emph{Random Structures Algorithms} \textbf{49} (2016),
  345--362.
\bibAnnoteFile{HSS16}

\bibitem[Huang, Lee and Sudakov(2012)]{HLS12}
H.~Huang, C.~Lee and B.~Sudakov, Bandwidth theorem for random graphs, \emph{J.
  Combin. Theory Ser. B} \textbf{102} (2012), 14--37.
\bibAnnoteFile{HLS12}

\bibitem[Janson, \L{}uczak and Ruci\'{n}ski(2000)]{JLR}
S.~Janson, T.~\L{}uczak and A.~Ruci\'{n}ski, \emph{Random graphs},
  Wiley-Interscience Series in Discrete Mathematics and Optimization,
  Wiley-Interscience, New York (2000).
\bibAnnoteFile{JLR}

\bibitem[Karp(1972)]{Karp}
R.~M. Karp, \emph{Reducibility among combinatorial problems}, 85--103, Springer
  US (1972).
\bibAnnoteFile{Karp}

\bibitem[Knox and Treglown(2013)]{KT13}
F.~Knox and A.~Treglown, Embedding spanning bipartite graphs of small
  bandwidth, \emph{Combin. Probab. Comput.} \textbf{22} (2013), 71--96.
\bibAnnoteFile{KT13}

\bibitem[Koml\'{o}s and Szemer\'{e}di(1983)]{KS83}
J.~Koml\'{o}s and E.~Szemer\'{e}di, Limit distribution for the existence of
  {H}amiltonian cycles in a random graph, \emph{Discrete Math.} \textbf{43}
  (1983), 55--63.
\bibAnnoteFile{KS83}

\bibitem[Kor\v{s}unov(1976)]{Kor76}
A.~D. Kor\v{s}unov, Solution of a problem of {P}. {E}rd\H os and {A}. {R}\'enyi
  on {H}amiltonian cycles in undirected graphs, \emph{Dokl. Akad. Nauk SSSR}
  \textbf{228} (1976), 529--532.
\bibAnnoteFile{Kor76}

\bibitem[Krivelevich, Lee and Sudakov(2010)]{KLS10}
M.~Krivelevich, C.~Lee and B.~Sudakov, Resilient pancyclicity of random and
  pseudorandom graphs, \emph{SIAM J. Discrete Math.} \textbf{24} (2010), 1--16.
\bibAnnoteFile{KLS10}

\bibitem[K\"uhn, Osthus and Treglown(2010)]{KOT10}
D.~K\"uhn, D.~Osthus and A.~Treglown, Hamiltonian degree sequences in digraphs,
  \emph{J. Combin. Theory Ser. B} \textbf{100} (2010), 367--380.
\bibAnnoteFile{KOT10}

\bibitem[Lee and Sudakov(2012)]{LS12}
C.~Lee and B.~Sudakov, Dirac's theorem for random graphs, \emph{Random
  Structures Algorithms} \textbf{41} (2012), 293--305.
\bibAnnoteFile{LS12}

\bibitem[{Montgomery}(2019)]{Mont19}
R.~{Montgomery}, {Hamiltonicity in random directed graphs is born resilient},
  \emph{arXiv e-prints}  (2019), \eprint{1901.09605}.
\bibAnnoteFile{Mont19}

\bibitem[{Montgomery}(pear)]{Mont17}
R.~{Montgomery}, Hamiltonicity in random graphs is born resilient, \emph{Random
  Structures Algorithms}  (to appear).
\bibAnnoteFile{Mont17}

\bibitem[Nenadov, Steger and Truji\'{c}(2019)]{NST17}
R.~Nenadov, A.~Steger and M.~Truji\'{c}, Resilience of perfect matchings and
  {H}amiltonicity in random graph processes, \emph{Random Structures
  Algorithms} \textbf{54} (2019), 797--819.
\bibAnnoteFile{NST17}

\bibitem[P\'osa(1962)]{Posa62}
L.~P\'osa, A theorem concerning Hamiltonian lines, \emph{Magyar Tud. Akad. Mat.
  Kutat\'o Int. K\"ozl.} \textbf{7} (1962), 225--226.
\bibAnnoteFile{Posa62}

\bibitem[P\'osa(1976)]{Posa76}
L.~P\'osa, Hamiltonian circuits in random graphs, \emph{Discrete Math.}
  \textbf{14} (1976), 359--364.
\bibAnnoteFile{Posa76}

\bibitem[{\noopsort{Skoric}}{\v{S}kori\'{c}}, Steger and
  Truji\'{c}(2018)]{NST2}
N.~{\noopsort{Skoric}}{\v{S}kori\'{c}}, A.~Steger and M.~Truji\'{c}, Local
  resilience of an almost spanning {$k$}-cycle in random graphs, \emph{Random
  Structures Algorithms} \textbf{53} (2018), 728--751.
\bibAnnoteFile{NST2}

\bibitem[Staden and Treglown(2017)]{ST17}
K.~Staden and A.~Treglown, On degree sequences forcing the square of a
  {H}amilton cycle, \emph{SIAM J. Discrete Math.} \textbf{31} (2017), 383--437.
\bibAnnoteFile{ST17}

\bibitem[Sudakov and Vu(2008)]{SV08}
B.~Sudakov and V.~H. Vu, Local resilience of graphs, \emph{Random Structures
  Algorithms} \textbf{33} (2008), 409--433.
\bibAnnoteFile{SV08}

\bibitem[Treglown(2016)]{Tregs16}
A.~Treglown, A degree sequence {H}ajnal-{S}zemer\'{e}di theorem, \emph{J.
  Combin. Theory Ser. B} \textbf{118} (2016), 13--43.
\bibAnnoteFile{Tregs16}

\end{thebibliography}

\appendix

\section{Proofs of \texorpdfstring{\cref{claim: rotations1,claim: rotations3,lem: ext}}{Lemmas 4.1, 4.2 and 4.8}}

\begin{proof}[Proof of \cref{claim: rotations1}]\COMMENT{This claim is Lemma 23 in \cite{KKO}. I've reproduced it here but we could equally just explain what $\Gamma$ would have to be to apply and quote the theorem there.}
Throughout the proof we write $R^t\coloneqq R_{G,P,v_\ell}^t$ and, for all $x\in V(P)$, $x^+\coloneqq x_{P,v_\ell}^+$ and $x^-\coloneqq x_{P,v_\ell}^-$.
Since $P$ is a longest path, we must have that $N_G(x) \subseteq V(P)$ for all $x\in\mathcal{R}_{G,P,v_\ell}$.
Let $T\coloneqq \{x \in N_{G}(R^t)\setminus R^t \mid x^{-}, x^{+} \notin R^t\}$.
It follows that if $x \in T$, then the segment of $P$ formed by $x^-$, $x$ and $x^+$ is preserved under any sequence of $t$ rotations of $P$.
Since $x \in N_{G}(R^t) \setminus R^t$, it follows that one of $x^{-}, x^{+}$ must be in $R_{G,P,v_\ell}(R^t)=R^{t+1}$.
Now let $T_{+} \coloneqq \{x^{+} \mid x \in T, x^{+} \in R^{t+1}\}$ and $T_{-} \coloneqq \{x^{-} \mid x \in T, x^{-} \in R^{t+1}\}$.
We have that either $|T_{+}| \ge |T|/2$ or $|T_{-}| \ge |T|/2$.
It follows that 
\[|R^{t+1}| \ge \frac{1}{2}|T| \ge \frac{1}{2}|N_{G}(R^t) \setminus R^t| - |R^t|.\qedhere\]
\end{proof}

\begin{proof}[Proof of \cref{claim: rotations3}]
Let $0<1/C\ll\eta \ll 1/c \ll \eps < 1$.
We condition on the event that the following holds for all $X,Y\subseteq V$:
\begin{equation}\label{eq:XYedgesposa}
e'_{G}(X,Y) = |X||Y|p \pm c\sqrt{|X||Y|np}.
\end{equation}
Indeed, \cref{lem: edges} implies this event a.a.s.~occurs.

Let $H\coloneqq G\setminus G'$.
We partition $P$ into $k\coloneqq \eta^{1/2} \log n$ vertex-disjoint intervals $P_1, \dots, P_k$ with $V(P)=\bigcup_{i\in [k]} V(P_i)$, whose lengths are as equal as possible.
By abusing notation, we will also view $P_i$ and $P$ as vertex sets.
Consider any $A\subseteq R_{G',P,v_\ell}^{\eta \log n}$ with $|A|\geq \eps n/100$.
Throughout this proof, we write $R(A)\coloneqq R_{G,P,v_\ell}(A)$.
For each $i\in[k]$, let $\hat{X}_i\subseteq A$ be the collection of all those vertices $x\in A$ for which some edge in $P_i$ is broken in the sequence of rotations resulting in $P_x$.
Let $X_{i,+}$ and $X_{i,-}$ be the collections of all those vertices $x \in A$ such that $P_i$ is unbroken (i.e.~it contains no broken edges) in the sequence of rotations resulting in $P_x$, and where $P_x$ (when directed from $x$ to $v_\ell$) traverses $P_i$ in the original and reverse order, respectively.
Note that $A = \hat{X}_i \cup X_{i,+} \cup X_{i,-}$ for every $i\in[k]$.
Let $I\coloneqq\{i\in[k]:|\hat{X}_i| \ge \eta^{1/4} |A|\}$.

We claim that 
\begin{equation}\label{equa:Isize}
    |I| \le \eta^{3/4} \log{n}.
\end{equation}
Indeed, recall that each vertex in $A$ is obtained by at most $\eta \log n$ rotations of $P$.
By considering the total sum of the number of rotations performed to obtain each different endpoint in $A$ we observe that
\[ \eta^{1/4} |A|\cdot |I| \le |A|\cdot \eta \log{n},\]
which implies \eqref{equa:Isize}.

\begin{claim}\label{eq: eH lowerbound}
We have $e'_H(A,V)\geq|A||V\setminus R(A)|p - \eta^{1/5} n^2 p$.
\end{claim}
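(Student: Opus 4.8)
\textbf{Proof proposal for \cref{eq: eH lowerbound}.}

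The plan is to compare $e'_H(A,V)$ with $e'_G(A,V)$ and $e'_{G'}(A,V)$. Since $E(G)$ is the disjoint union of $E(G')$ and $E(H)$, we have $e'_H(A,V)=e'_G(A,V)-e'_{G'}(A,V)$, and applying \eqref{eq:XYedgesposa} with $Y=V$ gives $e'_G(A,V)\ge |A|np-c\sqrt{|A|n\cdot np}\ge |A|np-\tfrac14\eta^{1/5}n^2p$, the last step using $|A|\le n$ and $np\ge C\log n$. So it suffices to show $e'_{G'}(A,V)\le |A|\,|R(A)|p+\tfrac34\eta^{1/5}n^2p$. The starting point is that, as $P$ is a longest path in $G'\cup P'$, $N_{G'}(x)\subseteq V(P)$ for every $x\in A$; since the $V(P_i)$ partition $V(P)$, this yields $e'_{G'}(A,V)=\sum_{x\in A}|N_{G'}(x)|=\sum_{x\in A}\sum_{i\in[k]}|N_{G'}(x)\cap V(P_i)|=\sum_{i\in[k]}e'_{G'}(A,V(P_i))$, so it is enough to bound each $e'_{G'}(A,V(P_i))$ appropriately.

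For $i\in I$ I would use the crude estimate $e'_{G'}(A,V(P_i))\le e'_G(A,V(P_i))\le |A|\,|V(P_i)|p+c\sqrt{|A|\,|V(P_i)|np}$ from \eqref{eq:XYedgesposa}. Since $|V(P_i)|\le n/k+1$, $|I|\le\eta^{3/4}\log n$ by \eqref{equa:Isize}, and $k=\eta^{1/2}\log n$, summing over $i\in I$ produces a contribution of order $\eta^{1/4}n^2p$ (the square-root terms summing to even less once $np\ge C\log n$), which is below $\eta^{1/5}n^2p$ for $\eta$ small. For $i\notin I$, write $A=\hat X_i\cup X_{i,+}\cup X_{i,-}$. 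The part of $e'_{G'}(A,V(P_i))$ coming from $\hat X_i$ is bounded in the same way, now using $|\hat X_i|<\eta^{1/4}|A|$, and summed over all $i$ it again contributes only $O(\eta^{1/4}n^2p)$; this is precisely where the choices of $k$ and of the threshold defining $I$ pay off.

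The heart of the argument is the contribution of $X_{i,+}\cup X_{i,-}$ for $i\notin I$. If $x\in X_{i,+}$ then, in the sequence of rotations producing $P_x$, the interval $P_i$ is unbroken and is traversed in its original $P$-orientation; hence for every $z\in N_{G'}(x)\cap V(P_i)$ except $O(1)$ exceptions (the endpoints of $P_i$, and the successor of $x$ along $P_x$), rotating $P_x$ with pivot $z$ is a rotation in our restricted sense --- its broken edge $z_{P,v_\ell}^-z$ lies on $P_i\subseteq P$ --- and produces a longest path in $G'\cup P'$ with endpoint $z_{P,v_\ell}^-$, so $z_{P,v_\ell}^-\in R(A)$. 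Thus $|N_{G'}(x)\cap V(P_i)|\le |R(A)\cap J_i|+O(1)$ with $J_i\coloneqq\{z_{P,v_\ell}^-:z\in V(P_i)\}$; symmetrically $|N_{G'}(x)\cap V(P_i)|\le |R(A)\cap J_i^+|+O(1)$ for $x\in X_{i,-}$, where $J_i^+$ is the set of $P$-successors of $V(P_i)$. As $J_i,J_i^+$ both lie in $\bar V(P_i)\coloneqq V(P_i)$ together with its at most two neighbours on $P$, summing over $x$ gives $\sum_{x\in X_{i,+}\cup X_{i,-}}|N_{G'}(x)\cap V(P_i)|\le |A|\bigl(|R(A)\cap\bar V(P_i)|+O(1)\bigr)$. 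Finally, every vertex of $V(P)$ lies in at most two of the sets $\bar V(P_i)$, and in two only if it is one of the at most $2k$ interval endpoints, so $\sum_{i\in[k]}|R(A)\cap\bar V(P_i)|\le |R(A)|+2k$; hence $\sum_{i\notin I}\sum_{x\in X_{i,+}\cup X_{i,-}}|N_{G'}(x)\cap V(P_i)|\le |A|\,|R(A)|+O(nk)=|A|\,|R(A)|+O(\eta^{1/2}n\log n)$, which is below $\tfrac14\eta^{1/5}n^2p$ since $n^2p\ge Cn\log n$. Adding the three error contributions gives $e'_{G'}(A,V)\le |A|\,|R(A)|p+\tfrac34\eta^{1/5}n^2p$ and hence the claim.

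I expect the only genuine difficulty here to be bookkeeping: ensuring that the seemingly negligible pieces --- the rare indices $i\in I$, the broken sets $\hat X_i$, the $O(1)$ boundary exceptions per vertex in the rotation step, and the overlaps between consecutive $\bar V(P_i)$ --- do not jointly exceed $\eta^{1/5}n^2p$. The mildly delicate point is that \eqref{eq:XYedgesposa}, applied to the short intervals $V(P_i)$ of length about $n/k$, carries a square-root error $c\sqrt{|A|\,|V(P_i)|np}$; verifying that these errors sum to something much smaller than $\eta^{1/5}n^2p$ is exactly where one uses $np\ge C\log n$ and the fact that $k$ is only polylogarithmic in $n$.
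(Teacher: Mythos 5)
Your overall decomposition ($e'_H=e'_G-e'_{G'}$, lower-bounding $e'_G(A,V)$ by $|A|np$ minus an error, then upper-bounding $e'_{G'}(A,V)$ interval by interval) is a legitimate reformulation of the paper's argument, and your combinatorial core --- that for $x\in X_{i,\pm}$ with $i\notin I$ every $G'$-neighbour of $x$ in $V(P_i)$, up to $O(1)$ boundary exceptions, yields via one further rotation a new endpoint in $R(A)$, so that $N_{G'}(x)\cap V(P_i)$ is contained in a shifted copy of $R(A)\cap \bar V(P_i)$ --- is exactly the mechanism the paper uses. But there is a genuine gap in how you convert that containment into an edge count: you conclude $\sum_{x\in X_{i,+}\cup X_{i,-}}|N_{G'}(x)\cap V(P_i)|\le |A|\bigl(|R(A)\cap\bar V(P_i)|+O(1)\bigr)$ and hence a main term of $|A|\,|R(A)|$, whereas the target requires $|A|\,|R(A)|\,p$. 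The purely deterministic bound ``number of neighbours of $x$ in a set $T$ is at most $|T|$'' is off by a factor of $1/p$ from what is needed: with $|A|\ge\eps n/100$ and $|R(A)|\ge|A|$, the term $|A|\,|R(A)|$ is of order $n^2$, which dwarfs both $|A|\,|R(A)|\,p$ and the allowed error $\eta^{1/5}n^2p$ when $p=\Theta(\log n/n)$. So the final assembly ``$e'_{G'}(A,V)\le |A|\,|R(A)|p+\tfrac34\eta^{1/5}n^2p$'' does not follow from what you proved.

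To repair this you must reinsert the randomness of $G$ at exactly this step: from the containment $N_{G'}(x)\cap V(P_i)\subseteq (R(A)\cap\bar V(P_i))^{\pm}\cup\{O(1)\text{ exceptions}\}$ deduce $e'_{G'}(X_{i,\pm},V(P_i))\le e'_{G}(X_{i,\pm},(R(A)\cap\bar V(P_i))^{\pm})+O(|A|)$ and only then apply \eqref{eq:XYedgesposa} to bound the right-hand side by $|X_{i,\pm}|\,|R(A)\cap\bar V(P_i)|\,p+c\sqrt{|X_{i,\pm}|\,|P_i|\,np}$; summing the square-root errors over the $k$ intervals gives $O(\sqrt{kn^3p})$, which is harmless. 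This is the mirror image of what the paper actually does: it lower-bounds the $G$-edges from $X_{i,\pm}$ into $(P_i\setminus R(A))^{\pm}$ by their expectation $|X_{i,\pm}|\,|P_i\setminus R(A)|\,p$ (all such edges being forced into $H$), rather than upper-bounding the $G'$-edges into $R(A)^{\pm}$. Either direction works, but in both cases the factor $p$ comes from the concentration estimate applied to $G$, not from counting vertices.
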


\begin{proof}
To prove this, note that, since $P$ is a longest path, we have $e'_{G'}(A,V \setminus P) = 0$. 
Hence, 
\begin{align}\label{eq: edge outside P}
e'_{H}(A,V\setminus P) = e'_{G}(A,V\setminus P).
\end{align}
Throughout this proof, for any $X\subseteq V(P)$ we write $X^+\coloneqq X_{P,v_\ell}^+$ and $X^-\coloneqq X_{P,v_\ell}^-$.
For vertices $v_j \in P_i \cap P_i^{-}$ and $x \in X_{i,+}$, if $x v_{j+1}$ is an edge in $G'$, then we have $v_{j}\in R(A)$.\COMMENT{Note that we intersect with $P^{-}$ here because we do not want to consider the last vertex in the interval. We have no information about the edge between this vertex and the next interval. In the next line we then consider $v_{j+1}$. We intersect with $P^{+}$ here because we do not want to be at the beginning of the interval for $v_{j+1}$ as this would have $v_j$ in the previous interval.}
In other words, $x$ has no edges to $(P_i\cap P_{i}^{+}) \setminus R(A)^{+}$ in the graph $G'$. 
By a similar argument, a vertex $x\in X_{i,-}$ has no edges to $(P_i\cap P_i^{-})\setminus R(A)^{-}$ in $G'$. Thus, we have
\begin{equation}\label{eq: no edges in G'}
e'_{G'}(X_{i,+}, (P_i \cap P_i^{+}) \setminus R(A)^{+}) =0 \quad \text{ and } \quad 
e'_{G'}(X_{i,-}, (P_i \cap P_i^{-}) \setminus R(A)^{-}) =0.
\end{equation}
As $G'=G\setminus H$, this implies that all edges of $G$ between $X_{i,\ast}$ and $(P_i\cap P_i^{\ast}) \setminus R(A)^{\ast}$ belong to $H$, for $\ast\in \{+,-\}$.
As $P_i\cap P_i^{\ast}$ and $P_i^{\ast}$ differ by exactly one vertex, by \eqref{eq: edge outside P} and \eqref{eq: no edges in G'} we have
\begin{align*}
    e'_H(A,V)& \geq e'_G(A, V\setminus P) + \sum_{\ast\in \{+,-\}} \sum_{i=1}^{k} \big(e'_G\big(X_{i,\ast}, \big(P_i \cap P_i^{\ast}\big) \setminus R(A)^{\ast}\big)\\
    &\geq e'_G(A, V\setminus P) + \sum_{\ast\in \{+,-\}} \sum_{i=1}^{k} \big(e'_G\big(X_{i,\ast}, \big(P_i \setminus R(A)\big)^{\ast}\big) - 4kn\\
    &\stackrel{\mathclap{\eqref{eq:XYedgesposa}}}{\geq}|A||V\setminus P|p - c\sqrt{|A|n^2 p} 
    + \!\!\sum_{\ast\in \{+,-\}} \sum_{i=1}^{k} \left(|X_{i,\ast}| |P_i \setminus R(A)| p - c\sqrt{ |X_{i,\ast}||P_i| n p } \right) - 4kn \\
    & \geq |A||V\setminus P|p  + \sum_{\ast\in \{+,-\}} \sum_{i=1}^{k} |X_{i,\ast} | |P_i \setminus R(A)| p -  4c\sqrt{ kn^3p}\\
    & \geq |A||V\setminus P|p + \sum_{i=1}^{k}|A\setminus \hat{X}_i||P_i\setminus R(A)| p - 4c\sqrt{kn^3p},
\end{align*}
where we used that $|P_i| \leq |P|/k+1$ in the penultimate inequality, and the fact that $A = \hat{X}_i\cup X_{i,+}\cup X_{i,-}$ in the final inequality\COMMENT{The second inequality holds since $(P_i \cap P_i^{\ast}) \setminus R(A)^{\ast}$ and $(P_i \setminus R(A))^{\ast}$ differ by at most one element.
For each such vertex, the number of edges between $X_{i,\ast}$ and said vertex is at most $|X_{i,\ast}|\leq n$.
The bound follows by adding over all values of $i$ and $\ast$, and taking into account that each edge may be counted twice.}.
By the definition of $I$, we have $|A\setminus \hat{X}_i| \ge  (1-  \eta^{1/4})|A|$ for all $i\in [k]\setminus I$.
Therefore, we have 
\begin{align*}
e'_H(A,V)&\geq |A||V\setminus P|p + (1-\eta^{1/4}) |A|p\sum_{i\in [k]\setminus I} |P_i\setminus R(A)|   - 4c\sqrt{kn^3 p}\\
& \stackrel{\mathclap{\eqref{equa:Isize}}}{\geq} |A||V\setminus R(A)| p - 2\eta^{1/4} |A|n p - 4c\sqrt{kn^3 p}\\
& \geq |A||V\setminus R(A)| p - \eta^{1/5} n^2 p.
\end{align*}
\COMMENT{We have the $2\eta^{1/4} |A|n p$ as we need to subtract $(1-\eta^{1/4}) |A|p\sum_{i\in  I} |P_i\setminus R(A)| \leq(1-\eta^{1/4}) |A|p\eta^{3/4}\log n(n/k+1)< \eta^{1/4} |A|n p.$}
We obtain the final inequality as $p\geq \log{n}/n$ implies $\sqrt{kn^3 p} \leq \eta^{1/4} n^2 p$.
This proves the claim.
\end{proof}

On the other hand, we have $e'_{G'}(A,V) \geq |A| \delta_{G'}(A)$ and, by \eqref{eq:XYedgesposa}, we have
\[e'_{G}(A,V) \leq |A|np + c \sqrt{n^3p} \leq (1+\eta)|A|np.\]
Therefore, 
\[e'_{H}(A,V) = e'_{G}(A,V) - e'_{G'}(A,V) 
\leq (1+\eta)|A|np -|A|\delta_{G'}(A).\]
Combining this with \cref{eq: eH lowerbound} gives the desired inequality,
\[|R(A)| \geq \delta_{G'}(A) p^{-1} - \eta n - \frac{\eta^{1/5}n^2}{|A|} 
\geq \delta_{G'}(A) p^{-1} - \eps n/10.\qedhere\]
\end{proof}

\begin{proof}[Proof of \cref{lem: ext}]
Let $1/C\ll\delta\ll\eps$.
Let $\mathcal{G}$ be the family of all subgraphs of the form $G \setminus H$, for all $H \in \mathcal{H}^\eps_{n,p}$.
(Note that we have $H \in \mathcal{H}^\eps_{n,p}$ here instead of $H \in \mathcal{H}^\eps_{n,p}(G)$, because this is more convenient for the argument below. But this results in the same family $\mathcal{G}$.)

The probability that the assertion of the lemma fails is 
\begin{align}\label{equa:lastlemmaprob1}
p^* & \coloneqq \mathbb{P}\big[ \bigcup_{R \in RE(1/2 + \eps/4),\ e(R) \leq \delta n^2 p} \big(\{R \subseteq G\} \cap \{\text{some } G' \in \mathcal{G} \text{ does not complement }R\}\big)\big]\nonumber\\
& \le \sum_{R \in RE(1/2 + \eps/4),\ e(R) \leq \delta n^2 p} \mathbb{P} [ \text{some } G' \in \mathcal{G} \text{ does not complement } R \mid R \subseteq G]\,\mathbb{P}[R \subseteq G],
\end{align}
where the union and sum are taken over all labelled graphs $R$ on $V$ which have property $RE(1/2 + \eps/4)$ and at most $\delta n^2 p$ edges.

Let $R$ be a fixed graph on $V$ with property $RE(1/2 + \eps/4)$ and at most $\delta n^2 p$ edges.
Let $P$ be a fixed path on $V$.
If in $R\cup P$ there is a path longer than $P$, then condition \ref{def46condition1} of \cref{def: comp} is already satisfied, so we can assume that there is no such path in $R \cup P$.
Then, by the definition of property $RE(1/2 + \eps/4)$, we can find a set $S_P\subseteq V$ and, for every $v \in S_P$, a corresponding set $T_{v,P}\subseteq V$, as in \cref{REdelta}. 
We can assume that for each $v \in S_P$ we have that $e_R(v, T_{v,P}) = 0$, as otherwise $R$ complements itself and there is nothing more to prove. 
For each $S \subseteq S_P$ with  $|S| = \eps n/8$ let $\mathcal{H}_S \subseteq \mathcal{H}^\eps_{n,p}$ be the collection of graphs $H\in\mathcal{H}^\eps_{n,p}$ for which every $v \in S$ is of the form $v_i$ for $i \ge n/2$ with respect to the ordering of $V(H)$ given in \cref{posa}.
Note that $\bigcup_{S \subseteq S_P:|S|=\eps n/8}\mathcal{H}_S = \mathcal{H}^\eps_{n,p}$.
Thus, given any such $S \subseteq S_P$ and $H \in \mathcal{H}_S$, we have $d_H(v) \le (1/2 - \eps)np$ for all $v \in S$. 
For each such $S \subseteq S_P$ and all $v \in S$, define $T_{v,P,S}\coloneqq T_{v,P} \setminus S$.
Note that $|T_{v,P,S}| \ge (1/2 + \eps /8)n$.

Fix $S \subseteq S_P$ and $v \in S$.
Since $|T_{v,P,S}| \ge (1/2 + \eps /8)n$, by \cref{lem: chernoff} we have
\[\mathbb{P}[e_G(v,T_{v,P,S})\leq np/2\mid R\subseteq G]\leq e^{-\Omega_\eps(np)}.\]
Since $S$ is disjoint from all sets of the form $T_{v,P,S}$, these events are independent for different vertices.
Thus, using that $|S| = \eps n/8$, we can see that
\begin{equation}\label{equa:lastlemmaprob}
    \mathbb{P}[e_G(v,T_{v,P,S})\leq np/2\ \  \text{for all} \ v\in S\mid R\subseteq G]\leq e^{-\Omega_\eps(n^2p)}.
\end{equation}
Note that if there exists $v \in S$ such that $e_G(v,T_{v,P,S})> np/2$, then for each $H\in\mathcal{H}_S$ we have $e_{G\setminus H}(v,T_{v,P,S})>np/2-(1/2-\eps)np>0$.
Therefore, if some $G'\in\mathcal{G}$ does not complement $R$, there must exist some path $P$ on $V$ and some $S\subseteq S_P$ with $|S|=\eps n/8$ such that all of the vertices of $S$ have fewer than $np/2$ neighbours in $T_{v,P,S}=T_{v,P}\setminus S$.
Note that there are at most $n\cdot n!$ choices for the path $P$ and $2^n$ choices for the set $S$.
Taking the union bound over all choices of the path $P$ and the set $S$, by \eqref{equa:lastlemmaprob} we have
\[\mathbb{P}[\text{some } G'\in\mathcal{G} \text{ does not complement } R\mid R \subseteq G] \le n 2^{n}\,n!\, e^{-\Omega_\eps(n^2p)}= e^{-\Omega_\eps(n^2p)}.\]
Combining this with \eqref{equa:lastlemmaprob1}, we have 
\begin{align*}
           p^* &\le e^{-\Omega_\eps(n^2p)} \sum_{R \in RE(1/2 + \eps/4),\ e(R) \leq \delta n^2 p}  \mathbb{P}(R \subseteq G)\\ 
           &\le e^{-\Omega_\eps(n^2p)} \sum_{k=1}^{\delta n^2p} \binom{\binom{n}{2}}{k}p^k 
           \le e^{-\Omega_\eps(n^2p)} \sum_{k=1}^{\delta n^2p} \left(\frac{en^2p}{k}\right)^k\\
           &\le e^{-\Omega_\eps(n^2p)}(\delta n^2 p)\left(\frac{e}{\delta}\right)^{\delta n^2 p}
           \le e^{-\Omega_\eps(n^2p)}e^{O(\delta n^2p\log({1/\delta}))}\\
           &= o(1),
\end{align*}
where the penultimate inequality holds since $(en^2p/k)^k$ is monotone increasing in the range $1\le k\le \delta n^2p$.\COMMENT{
By taking a derivative we see that the maximum of $(\frac{C}{x})^x$ occurs at $x=C/e$.
Indeed, consider the derivative of $\exp{\log ((C/x)^x)}$:
\begin{align*}
    \frac{\diff}{\diff x}\left(e^{x\log C-x\log x}\right)&=e^{x\log C-x\log x}\frac{\diff}{\diff x}(x\log C-x\log x)=e^{x\log (C/x)}(\log C-\log x-1)\\
    &=\left(\frac{C}{x}\right)^x\left(\log\left(\frac{C}{x}\right)-1\right).
\end{align*}
This can only be equal to $0$ when $\log(C/x)=1$, that is, when $x=C/e$.
Furthermore, it is clear that the derivative is positive when $x<C/e$.
In our case, this shows that the function $(\frac{en^2p}{k})^k$ is monotone increasing in $k$ for $k<n^2p$.}
\end{proof}

\bigskip

{\footnotesize \obeylines \parindent=0pt
	\begin{center}
	\begin{tabular}{lll}
        Padraig Condon, Alberto Espuny D\'{i}az,  Daniela K\"uhn and Deryk Osthus	&\ & 	Jaehoon Kim       \\
		School of Mathematics &\ & Department of Mathematical Sciences		  		 	 \\
		University of Birmingham &\ & KAIST 	  			 	 \\
		Birmingham &\ & Daejeon                            			 \\
        B15 2TT      &\ & 		34141				  				\\
		UK &\ & Republic of Korea						      
	\end{tabular}
    \end{center}

\begin{flushleft}
	{\it{E-mail addresses}: \tt{\{pxc644, axe673, d.kuhn, d.osthus\}@bham.ac.uk}, jaehoon.kim@kaist.ac.kr}.
\end{flushleft}
}

\end{document}